\documentclass[11pt,reqno,a4paper]{amsart}
\usepackage{a4wide}
\usepackage[T1]{fontenc}
\usepackage[utf8]{inputenc}
\usepackage{lmodern}
\usepackage{microtype}
\usepackage{amsmath,amssymb,mathtools}
\usepackage[mathcal]{eucal}
\usepackage{enumitem}
\usepackage{url}
\usepackage[colorlinks=true,linkcolor=blue,citecolor=blue,urlcolor=blue]{hyperref}
\hypersetup{
  pdftitle={Twisting exponential spectra},
  pdfauthor={Bence Horv\'ath and Tomasz Kania}
}

\title[Twisting exponential spectra]{Twisting exponential spectra}

\author[B. Horv\'ath]{Bence Horv\'ath}
\address[B.~Horv\'ath]{Helvetia Insurance Ltd., Aeschengraben~21, Postfach, 4001 Basel, Switzerland}
\email{hotvath@gmail.com}

\author[T. Kania]{Tomasz Kania}
\address[T.~Kania]{Mathematical Institute\\Czech Academy of Sciences\\\v Zitn\'a 25 \\115 67 Praha 1\\Czech Republic  and  Institute of Mathematics and Computer Science\\ Jagiellonian University\\ {\L}ojasiewicza 6, 30-348 Krak\'{o}w, Poland
}
\email{kania@math.cas.cz, tomasz.marcin.kania@gmail.com}
\thanks{RVO: 67985840.}
\subjclass[2020]{Primary 46H05, 47A10, 47L10; Secondary 46B25, 46B28, 47A53, 55Q15}
\keywords{exponential spectrum, Banach algebra, Calkin algebra, topological stable rank, Bourgain--Delbaen space, Fredholm index, index group, stabilisation, compact perturbation}

\newcommand{\B}{\mathcal B}
\newcommand{\K}{\mathcal K}
\newcommand{\E}{\mathcal E}
\newcommand{\Calk}{\mathcal Q}
\newcommand{\C}{\mathbb C}
\newcommand{\R}{\mathbb R}
\newcommand{\N}{\mathbb N}
\newcommand{\Z}{\mathbb Z}
\newcommand{\Q}{\mathbb Q}
\newcommand{\Cq}{\mathbb C_{\mathbb Q}}
\newcommand{\Inv}{\operatorname{Inv}}
\newcommand{\Invzero}{\operatorname{Inv}_0}
\newcommand{\Exp}{\operatorname{Exp}}
\newcommand{\Lip}{\operatorname{Lip}}
\newcommand{\GL}{\operatorname{GL}}
\newcommand{\ind}{\operatorname{ind}}
\newcommand{\Ran}{\operatorname{Ran}}
\newcommand{\Ker}{\operatorname{Ker}}
\newcommand{\Lg}{\operatorname{Lg}}
\newcommand{\Rg}{\operatorname{Rg}}
\newcommand{\ltsr}{\operatorname{ltsr}}
\newcommand{\rtsr}{\operatorname{rtsr}}
\newcommand{\tsr}{\operatorname{tsr}}
\newcommand{\q}{\mathfrak q}
\newcommand{\IndG}{\mathfrak I}
\newcommand{\Jac}{\mathcal J}

\theoremstyle{plain}
\newtheorem{theorem}{Theorem}[section]
\newtheorem{proposition}[theorem]{Proposition}
\newtheorem{lemma}[theorem]{Lemma}
\newtheorem{corollary}[theorem]{Corollary}
\newtheorem*{theoremA}{Theorem A}
\newtheorem*{theoremB}{Theorem B}

\theoremstyle{definition}
\newtheorem{example}[theorem]{Example}

\theoremstyle{remark}
\newtheorem{remark}[theorem]{Remark}
\newtheorem*{monotonicityremark}{Monotonicity Remark}
\newtheorem{question}[theorem]{Question}

\numberwithin{equation}{section}

\begin{document}

\begin{abstract}
Klaja and Ransford exhibited a complex unital Banach algebra in which the
exponential spectra of \(ab\) and \(ba\) differ away from zero, and asked
whether this can occur in an algebra of bounded operators.  We answer their
question affirmatively.  For a Bourgain--Delbaen space \(X\) obtained from
Motakis' construction with Calkin algebra isomorphic as a Banach algebra to
\(C(S^4)\), we
construct \(S,T\in\B(X\oplus X)\) such that
\[
 \frac12\in\varepsilon_{\B(X\oplus X)}(ST)
 \quad\text{and}\quad
 \frac12\notin\varepsilon_{\B(X\oplus X)}(TS).
\]
Moreover,
\(\ltsr\B(X\oplus X)=\rtsr\B(X\oplus X)=2\), which is the least possible
stable rank for such an example.  More generally, if \(K\) is compact
metrisable and \(X\) is any space arising from Motakis' construction for
\(C(K)\), then, for every \(n\geqslant1\),
\[
 \ltsr\B(X^n)=\rtsr\B(X^n)=
 \begin{cases}
 \left\lceil \lfloor\dim K/2\rfloor/n\right\rceil+1,&\dim K<\infty,\\
 \infty,&\dim K=\infty.
 \end{cases}
\]
This also gives counterexamples of every finite stable rank at least two and
of infinite stable rank.  Finally, we relate every failure of
exponential-spectral commutativity to the kernel of the first matrix
stabilisation map on the index group.
\end{abstract}

\maketitle

\section{Introduction}

Let \(A\) be a complex unital Banach algebra.  We write \(\Inv(A)\) for the
group of invertible elements and \(\Invzero(A)\) for the connected component of
\(1_A\) in \(\Inv(A)\).  Equivalently,
\[
   \Invzero(A)=\Exp(A)=\{e^{a_1}\cdots e^{a_m}:a_1,\ldots,a_m\in A,\ m\geqslant1\}.
\]
The exponential spectrum of \(a\in A\), introduced by Harte \cite{Harte}, is
\[
   \varepsilon_A(a)=\{\lambda\in\C:\lambda1_A-a\notin\Invzero(A)\}.
\]
Harte's boundary theorem gives
\[
 \partial\varepsilon_A(a)\subseteq\partial\sigma_A(a)
 \subseteq\sigma_A(a)\subseteq\varepsilon_A(a),
\]
so the exponential spectrum is obtained from the ordinary spectrum by filling
some, possibly none, of its bounded complementary components
\cite[Theorem~1]{Harte}.

The ordinary spectrum satisfies Jacobson's commutativity law
\[
   \sigma_A(ab)\setminus\{0\}=\sigma_A(ba)\setminus\{0\}\qquad(a,b\in A),
\]
because \(1-ab\) is invertible if and only if \(1-ba\) is invertible.  Murphy
asked whether the analogous identity holds for the exponential spectrum
\cite[p.~237, immediately before Proposition~4.3]{Murphy}.  Klaja and
Ransford answered this in the negative by
constructing \(a,b\) in the Banach algebra
\(A_0=C(S^4,M_2(\C))\) for which
\[
   \varepsilon_{A_0}(ab)\setminus\{0\}
   \neq\varepsilon_{A_0}(ba)\setminus\{0\}.
\]
Their obstruction is the non-zero element of
\(\pi_4(\GL_2(\C))\cong\Z/2\Z\), represented by the suspension of the Hopf
map \cite[Theorems~1.2 and~3.1]{KlajaRansford}.  Polar decomposition gives a
deformation retraction of \(\GL_m(\C)\) onto \(U(m)\).  Since
\(U(m+1)/U(m)\cong S^{2m+1}\), the homotopy exact sequence shows that
\[
 \pi_j(U(m))\longrightarrow\pi_j(U(m+1))
\]
is an isomorphism for \(j<2m\).  Hence \(\pi_4(U(3))\) is stable, and
Bott periodicity gives
\[
 \pi_4(\GL_3(\C))\cong\pi_4(U)=0
\]
\cite[Theorem, p.~933]{BottStableHomotopy}.  Thus the class dies under the
next matrix stabilisation.  Klaja and Ransford then asked whether such a
failure can occur in \(\B(E)\) for some Banach space \(E\) \cite[Question~4.2]{KlajaRansford}.
Daniel and Ghosh proved the following transfer principle for a complex Banach
space \(Y\): if
\[
 \varepsilon_{\Calk(Y)}(uv)\setminus\{0\}
 =\varepsilon_{\Calk(Y)}(vu)\setminus\{0\}
 \qquad(u,v\in\Calk(Y)),
\]
then the corresponding identity holds for every pair in \(\B(Y)\)
\cite[Theorem~4.6]{DanielGhosh}.  This is a statement about the
\emph{exponential-spectral identity}, not merely about algebraic
commutativity.  In particular, \(\Calk(X)\cong C(S^4)\) is commutative as a
Banach algebra and so the theorem applies to \(\B(X)\).  Our operator space,
however, is \(E=X\oplus X\), and
\[
       \Calk(E)\cong M_2(\Calk(X))
       \cong M_2(C(S^4))\cong C(S^4,M_2(\C)),
\]
where the Klaja--Ransford pair shows that the hypothesis of their theorem
fails.  Thus the transfer principle is fully consistent with our example.

The construction belongs to the programme of prescribing Calkin algebras by
building Banach spaces with few operators.  Argyros and Haydon produced a
Bourgain--Delbaen \(\mathcal L_\infty\)-space with the scalar-plus-compact
property \cite{ArgyrosHaydon}.  Tarbard subsequently obtained
Bourgain--Delbaen spaces with finite-dimensional Calkin algebras and one whose
Calkin algebra is isomorphic to the convolution algebra
\(\ell_1(\mathbb N_0)\) \cite{Tarbard}.  Motakis, Puglisi and Zisimopoulou
then realised \(C(K)\) for every countable compact metrisable \(K\)
\cite{MotakisPuglisiZisimopoulou}.  Motakis later proved that every separable
commutative unital \(C^*\)-algebra occurs: for each compact metrisable \(K\),
his construction gives a Bourgain--Delbaen space whose Calkin algebra is,
after an equivalent renorming, isometrically isomorphic as a Banach algebra
to \(C(K)\) \cite{Motakis}.

For comparison with a classical space, if \(H\) is an infinite-dimensional
Hilbert space, then \(\B(H)\) contains two isometries with orthogonal ranges.
Rieffel's criterion therefore gives \(\tsr\B(H)=\infty\)
\cite[Proposition~6.5]{RieffelStableRank}; the continuous involution then
identifies the left and right ranks
\cite[Proposition~1.6]{RieffelStableRank}.

The compatible metric used in Motakis' construction is part of its auxiliary
data.  We verify in Section~4 that these data can be chosen so that any
prescribed countable family in \(C(K)\) is Lipschitz; this is the only addition
to Motakis' construction.  Choosing the entries of the Klaja--Ransford
matrices yields the desired lift to an operator algebra.  We write \(\ltsr\)
and \(\rtsr\) for the left and right topological stable ranks, and
\(\dim\) for covering dimension.

\begin{theoremA}
There are a Motakis realisation \(X\) over \(S^4\) and operators
\(S,T\in\B(E)\), where \(E=X\oplus X\), such that
\[
       \ltsr\B(E)=\rtsr\B(E)=2
       \quad\text{and}\quad
       \varepsilon_{\B(E)}(ST)\setminus\{0\}
       \neq
       \varepsilon_{\B(E)}(TS)\setminus\{0\}.
\]
More precisely,
\[
       \frac12\in\varepsilon_{\B(E)}(ST)
       \quad\text{and}\quad
       \frac12\notin\varepsilon_{\B(E)}(TS).
\]
\end{theoremA}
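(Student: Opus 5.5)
We construct the operators by lifting the Klaja--Ransford pair through the Calkin map, and we compute both the exponential spectra and the stable ranks on the quotient.

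\emph{Step 1 (the space).} Let \(a,b\in C(S^4,M_2(\C))\) be the Klaja--Ransford elements, normalised so that \(\frac12-ab\notin\Invzero\) and \(\frac12-ba\in\Invzero\). If necessary we perturb them slightly, which is harmless because \(\Inv\) is open and its components are open; this lets us assume their eight scalar entries are smooth, hence Lipschitz for the round metric. By the result of Section~4, we choose the auxiliary metric in Motakis' construction so that these eight functions are Lipschitz. We then obtain \(X\) with \(\Calk(X)\cong C(S^4)\), together with operators \(S_{ij},T_{ij}\in\B(X)\) whose Calkin images are the entries \(a_{ij},b_{ij}\). Put \(S=(S_{ij})\) and \(T=(T_{ij})\) in \(\B(E)\). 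Under \(\Calk(E)\cong M_2(\Calk(X))\cong C(S^4,M_2(\C))\), the image \(\q(S)\) is \(a\) and \(\q(T)\) is \(b\).

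\emph{Step 2 (\(\frac12\in\varepsilon(ST)\)).} The quotient map \(\q\) is a continuous unital homomorphism, so it sends \(\Invzero(\B(E))\) into \(\Invzero(\Calk(E))\). Since \(\frac12-ab\notin\Invzero(\Calk(E))\), the operator \(\frac12-ST\) cannot lie in \(\Invzero(\B(E))\).

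\emph{Step 3 (\(\frac12\notin\varepsilon(TS)\)).} By Jacobson, \(\frac12-TS\) is invertible if and only if \(\frac12-ST\) is. We get the invertibility of \(\frac12-ST\) from the spectrum: \(\frac12-ab\) is invertible in the quotient (\(\sigma\subseteq\varepsilon\), and Klaja--Ransford have \(\frac12\notin\sigma(ab)\), for instance by arranging \(\frac12\notin\sigma(ab)\) pointwise). Hence \(\frac12-ST\) is Fredholm. Its index is the index of a path-connected deformation \(\lambda-ST\), \(\lambda\to\infty\), along which the Calkin image stays invertible; here we use that \(\frac12\) lies in the unbounded component of the complement of \(\sigma(ab)\), or instead the structure of the index group. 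So the index is \(0\).

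Because \(\Calk(X)\) is commutative, we expect from Motakis/Tarbard-type arguments that the index group of \(\B(X^n)\) is controlled by \(C(S^4)\) and that index-zero Fredholm operators are compact perturbations of invertibles. This makes \(\frac12-ST\) invertible after a compact perturbation, and the perturbation can be absorbed if \(\frac12\notin\sigma(ST)\). To guarantee this, we modify \(S\) by a compact operator, using that \(ST\) has countable spectrum outside \(\sigma(ab)\).

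The harder part is showing \(\frac12-TS\in\Invzero(\B(E))\). Its Calkin image \(\frac12-ba\) lies in \(\Invzero(\Calk(E))\), say it equals \(e^{c_1}\cdots e^{c_m}\). We lift each \(c_k\) to \(C_k\) and consider \(R=(\frac12-TS)(e^{C_1}\cdots e^{C_m})^{-1}\), which lies in \(1+\K(E)\) and is invertible. Invertible elements of \(1+\K(E)\) lie in \(\Invzero\), since \(\Inv(1+\K)\) is connected for compacts on a space with the approximation property; Bourgain--Delbaen spaces have bases. Hence \(\frac12-TS\in\Invzero\). I expect the main obstacle to be this connectedness together with the index-zero bookkeeping: showing \(\frac12-TS\) is genuinely invertible, not merely Fredholm.

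\emph{Step 4 (stable rank).} We use the general formula stated in the abstract with \(n=2\) and \(\dim S^4=4\), which gives \(\lceil 2/2\rceil+1=2\). The plan for that formula is to bound \(\tsr\B(X^n)\) by \(\max\{\tsr M_n(C(K)),\,2\}\) using \(\tsr\) of extensions by compacts (Rieffel/Nistor-type estimate, with the ideal \(\K\) having \(\tsr 1\)). The lower bound comes from surjectivity onto \(M_n(C(K))\), whose \(\tsr\) is \(\lceil\lfloor\dim K/2\rfloor/n\rceil+1\) by Rieffel. The value is at least \(2\) because \(\B(E)\) admits a Fredholm operator of nonzero index, or because the Klaja--Ransford phenomenon fails in \(\tsr 1\).
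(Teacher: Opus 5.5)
\textbf{Gap in Step 3.} Nothing in your argument makes \(\frac12-TS\) (equivalently \(\frac12-ST\)) invertible for your lifts, and the rest of Step~3 depends on this. You fix \(S,T\) only through their Calkin images, and that cannot suffice: a rank-one change of \(T\) already puts \(\frac12\) into \(\sigma(TS)\). Take \(x_0\) with \(Sx_0\neq0\) and \(\psi\in E^*\) with \(\psi(Sx_0)=1\), and replace \(T\) by \(T+(\frac12x_0-TSx_0)\otimes\psi\).

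The reasons you offer do not close this gap.
\begin{enumerate}
\item \(\frac12\) is the centre of the circle \(\sigma(ab)\) (Remark~\ref{rem:spectra_disk_circle}), so it lies in a \emph{bounded} component of \(\C\setminus\sigma(ab)\). It must: your Steps~2--3 put \(\frac12\in\varepsilon(ab)\setminus\sigma(ab)\), and by Harte's theorem this happens only in bounded components. So the deformation \(\lambda\to\infty\) is unavailable.
\item Even with index zero, the spectrum in a bounded component of the Fredholm domain need not be countable. For \(V\oplus V^*\) on \(\ell_2\oplus\ell_2\), with \(V\) the unilateral shift, every point of the open disc is an eigenvalue of index zero. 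You give no reason why this cannot happen for \(ST\).
\item ``The perturbation can be absorbed if \(\frac12\notin\sigma(ST)\)'' assumes the conclusion.
\end{enumerate}
Without invertibility, your \(R\) lies in \(I+\K(E)\) but is not invertible, so your connectedness argument never applies. That argument is itself correct, and needs no approximation property: move \(z\) from \(0\) to \(1\) in \(I+zK\) while avoiding the discrete set \(\{-1/\mu:0\neq\mu\in\sigma(K)\}\).

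\textbf{The missing idea.} In your version the Lipschitz condition plays no role, since every Calkin class has some lift. Its actual purpose is Theorem~\ref{thm:Motakis}(iv): \(f\mapsto\widehat f\) is an honest unital algebra homomorphism on \(\Lip(K,\rho)\), not merely multiplicative modulo \(\K(X)\). The paper takes \(S=\widehat a\), \(T=\widehat b\), and adds \(\theta\) to the prescribed family. Lemma~\ref{lem:matrix_lift} then gives the exact identities \(I_E-2ST=\widehat c\) and \(I_E-2TS=\widehat d=\widehat{e^G}=e^{\widehat G}\). So \(\frac12-TS\) is a non-zero scalar times a single exponential: invertible and in \(\Invzero(\B(E))\) at once. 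Jacobson's formula then gives invertibility of \(\frac12-ST\), with no index theory or spectral perturbation.

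\textbf{Repairing your route.} Your approach can also be completed.
\begin{enumerate}
\item \(\ind(\frac12-TS)=0\), because the index induces a continuous homomorphism \(\Inv(\Calk(E))\to\Z\) and \(\frac12-ba\in\Invzero(\Calk(E))\).
\item \(S\) is injective on \(\Ker(\frac12-TS)\): if \(Sx=0\) there, then \(x=2TSx=0\). Hence every functional on that finite-dimensional kernel is the restriction of some \(\psi\circ S\) with \(\psi\in E^*\).
\item The kernel--cokernel matching in the proof of Proposition~\ref{prop:lower_semifred_and_index_zero} can therefore be realised as \(-FS\) with \(F\) of finite rank, making \(\frac12-(T+F)S\) invertible without changing Calkin classes.
\end{enumerate}
After replacing \(T\) by \(T+F\), your \(R\)-argument and Step~2 go through.

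\textbf{Step 4.} Quoting Theorem~B with \(n=2\) is exactly what the paper does. Your sketch of that formula, however, is not a proof. The extension estimates you allude to are not established for the Banach algebra \(\B(X^n)\), and \(\tsr\K(X^n)=1\) is asserted without proof. The paper instead lifts Calkin-unimodular tuples through diagonal representatives with finite-rank corrections (Lemma~\ref{lem:left_invertible_column_lift}), then amplifies to matrices via Lemma~\ref{lem:Banach_matrix_upper_bound}.

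Also, \(\B(X^n)\) has no Fredholm operators of non-zero index. Every operator is a matrix of diagonal-plus-compact operators, and the argument of Lemma~\ref{lem:diagonal_semifredholm} works blockwise. So of your two reasons for the lower bound \(2\), only the Murphy-type one (or the Calkin quotient) is valid.
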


Stable rank two is the first possible value here.  Indeed, the left or right
stable-rank-one condition is equivalent to density of the invertibles, and
Murphy's argument then gives exponential-spectral commutativity.  Our second
main result applies uniformly to every Motakis realisation in the sense of
Section~4.

\begin{theoremB}
Let \(K\) be compact metrisable, let \(X\) be any Motakis realisation over
\(K\), and let \(n\geqslant1\).  Then
\[
   \ltsr\B(X^n)=\rtsr\B(X^n)
   =
   \begin{cases}
      \displaystyle
      \left\lceil\dfrac{\lfloor\dim K/2\rfloor}{n}\right\rceil+1,
        & \dim K<\infty,\\[3mm]
      \infty, & \dim K=\infty.
   \end{cases}
\]
In particular,
\[
   \ltsr\B(X)=\rtsr\B(X)=\tsr C(K),
\]
and \(\B(X)\) has stable rank one if and only if \(\dim K\leqslant1\).
\end{theoremB}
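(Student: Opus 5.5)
Since \(\B(X^n)\) is an extension of \(\Calk(X^n)\cong M_n(C(K))\cong C(K,M_n(\C))\) by the ideal \(\K(X^n)\), the plan is to compute the stable rank of the quotient and then show that it lifts unchanged to \(\B(X^n)\). For the quotient I will use Rieffel's matrix formula \(\tsr M_n(A)=\lceil(\tsr A-1)/n\rceil+1\) \cite{RieffelStableRank} together with the classical value \(\tsr C(K)=\lfloor\dim K/2\rfloor+1\) (and \(\infty\) when \(\dim K=\infty\)). These give
\[
 \tsr\Calk(X^n)=\left\lceil\frac{\lfloor\dim K/2\rfloor}{n}\right\rceil+1 .
\]
The quotient carries an involution, namely pointwise conjugate transpose in \(C(K,M_n(\C))\), which is continuous for the renormed structure after transport. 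Hence \(\ltsr=\rtsr\) on the Calkin side. The remaining work is to prove \(\ltsr\B(X^n)=\ltsr\Calk(X^n)\), and likewise on the right.

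The inequality \(\ltsr\B(X^n)\ge\ltsr\Calk(X^n)\) is the standard quotient estimate: stable rank does not increase under surjective homomorphisms. For the reverse inequality, suppose \((a_1,\dots,a_m)\in\B(X^n)^m\) with \(m\ge\tsr\Calk\). I first perturb the images in the Calkin algebra to a left-unimodular tuple. Then I lift this tuple, which makes \(\sum b_ia_i\) Fredholm. Next I need to perturb so that \(\sum b_i a_i\) is invertible in \(\B(X^n)\), not merely modulo compacts. For that I rely on the general extension estimate \(\tsr B\le\max(\tsr I,\tsr(B/I),\ldots)\), whose extra term is controlled by the connecting map, i.e.\ by the index. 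Rieffel's results give \(\tsr\K(X^n)=1\), since \(\K\) is approximable: Bourgain--Delbaen spaces have a basis/FDD, so compacts are limits of finite-rank operators.

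The \textbf{main obstacle} is the index obstruction. The Fredholm index on \(\B(X^n)\) could prevent lifting a unimodular row of the quotient to a unimodular row upstairs. For rows of length \(m\ge2\) I would handle it by padding: a row \((u_1,\dots,u_m)\) whose combination has index \(k\) can be corrected by adding a small finite-rank perturbation to a second entry, making the row left-invertible. Indeed, left invertibility of the column operator \(X^n\to(X^n)^m\) only requires injectivity with closed complemented range, and a finite-rank adjustment of one coordinate kills the finite-dimensional kernel. This uses the fact that in Motakis spaces every Fredholm operator is a scalar-type element of \(C(K)\) plus compact and has computable index, or simply that kernels are finite dimensional. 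The exceptional case is \(m=1\) with \(\dim K\le1\). There I need invertibles dense in \(\B(X^n)\). This should follow since \(\Calk\) has stable rank one, so invertibles are dense in the quotient. Moreover, lifts of invertibles of \(C(K,M_n)\) that are close to invertibles upstairs have index zero, because the index is a homotopy invariant and \(\dim K\le1\) combined with connectedness arguments forces it to vanish on every component reachable by small perturbation. After subtracting a finite-rank operator, index zero Fredholm operators become invertible.

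The special statements then follow. Putting \(n=1\) gives \(\tsr C(K)\). Stable rank one holds exactly when \(\lfloor\dim K/2\rfloor=0\), i.e.\ when \(\dim K\le1\). The infinite-dimensional case follows from the quotient inequality alone.
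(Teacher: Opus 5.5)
There is a genuine gap. Your lower bound and your value for $\tsr\Calk(X^n)$ are fine; citing Rieffel's matrix formula for the C*-algebra $M_n(C(K))$ is an acceptable substitute for the paper's Herman--Vaserstein/Vaserstein route. The gap is the upper bound $\ltsr\B(X^n)\leqslant\tsr\Calk(X^n)$, which is the real content of Theorem~B. You never establish the property of Motakis spaces that kills the index obstruction, and the arguments you sketch for it do not work.

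\emph{The extension estimate.} The estimate you invoke is Rieffel's, proved for C*-algebras (as is your source for $\tsr\K(X^n)=1$). Its third term is the connected stable rank $\operatorname{csr}(B/I)$, which can exceed $\tsr(B/I)$; for instance $\operatorname{csr}C(S^1)=2>1=\tsr C(S^1)$. Any refinement phrased via the connecting map would still need the index obstruction to vanish, which is exactly what is missing.

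\emph{The case $m\geqslant2$.} You locate the obstruction in the kernel, but the column $a\colon X^n\to(X^n)^m$ already has finite-dimensional kernel once its Calkin image is left unimodular. The real constraint is the cokernel. A small perturbation preserves the semi-Fredholm index $\alpha(a)-\beta(a)$, so it can make $a$ injective only if $\beta(a)\geqslant\alpha(a)$. The standard correction maps $\Ker a$ into a complement of $\Ran a$, so it generally alters all coordinates, not just one. On $\ell_2$, an isometry of $\ell_2$ onto $\ell_2\oplus\ell_2$ composed with the backward shift is a column that is Fredholm of index $+1$ with left-unimodular Calkin image, yet no nearby column is left invertible. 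This is the mechanism behind $\tsr\B(\ell_2)=\infty$, and ``kernels are finite dimensional'' says nothing about it.

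\emph{The case $m=1$, $\dim K\leqslant1$.} Homotopy invariance plus low dimension cannot force index zero. In the Toeplitz extension $0\to\K(\ell_2)\to\mathcal T\to C(S^1)\to0$ the quotient has stable rank one, but the unilateral shift lifts $z$ with index $-1$, and invertibles are not dense in $\mathcal T$.

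\emph{What your direct lifting on $X^n$ actually needs.} It needs two facts:
(i) every Fredholm operator on $X^n$ has index $0$;
(ii) for $m\geqslant2$, every upper semi-Fredholm column $X^n\to(X^n)^m$ has infinite-dimensional cokernel.
Fact (ii) follows by evaluating a putative Calkin inverse pointwise in $M_{nm,n}(C(K))$ and comparing ranks. Fact (i) is a genuine property of these spaces; it fails on $\ell_2$.

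\emph{How the paper does it.} The paper works only with $n=1$ and gets what it needs from diagonal representatives. By Motakis' decomposition $T=D+K_0$, every Calkin class has a diagonal lift. Diagonal semi-Fredholm operators have index zero (Lemma~\ref{lem:diagonal_semifredholm}), and lower semi-Fredholm diagonal rows of length $m\geqslant2$ have infinite-dimensional kernel (Lemma~\ref{lem:diagonal_row}).

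In Lemma~\ref{lem:left_invertible_column_lift}, the diagonal lift $H$ of the Calkin left inverse is perturbed by finite rank to a surjection $H'$. The index-zero operator $H'G=I+K_1$ is then made invertible by a small finite-rank $F$. Finally $F$ is pulled back through $H'$ to a small finite-rank correction of $G$. This treats $m=1$ and $m\geqslant2$ uniformly, and commutativity of $C(K)$ gives the right-sided version.

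The paper never lifts on $X^n$ directly. The case $n>1$ follows from the purely Banach-algebraic bound $\ltsr M_n(A)\leqslant\lceil(\ltsr A-1)/n\rceil+1$ of Lemma~\ref{lem:Banach_matrix_upper_bound}, applied to $A=\B(X)$ and to its opposite algebra.
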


The proof of Theorem~B is an \(m\)-tuple Fredholm lifting argument.  The lower
bound comes from the Calkin quotient.  For the upper bound we approximate a
tuple by one which is unimodular modulo the compact operators and choose
diagonal representatives for the quotient coefficients.  The kernel of a
lower semi-Fredholm diagonal row is large enough to absorb its cokernel; a
finite-rank perturbation then produces an actually unimodular tuple.  A short
disjoint-union argument subsequently gives, as a corollary, counterexamples
of every stable rank in \(\{2,3,\ldots\}\cup\{\infty\}\).

We also record an elementary matrix factorisation.  It shows that every
failure of exponential-spectral commutativity yields a non-trivial element in
the kernel of the first matrix-stabilisation map on
\(\Inv(A)/\Invzero(A)\), thereby isolating the remaining torsion question of
Klaja and Ransford.
Finally, we calculate one particular Fredholm-index homomorphism on essentially
incomparable direct sums and show that it is insensitive to Jacobson twists.

\section{Elementary facts about identity components}

We begin with several standard facts.  They are included both to fix notation
and to isolate the component arguments used later.

\begin{lemma}\label{lem:identity_component_exponentials}
For every complex unital Banach algebra \(A\),
\[
 \Invzero(A)=\{e^{a_1}\cdots e^{a_m}:
                 a_1,\ldots,a_m\in A,\ m\geqslant1\}.
\]
\end{lemma}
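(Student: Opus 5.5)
Write \(G=\{e^{a_1}\cdots e^{a_m}:a_1,\ldots,a_m\in A,\ m\geqslant1\}\). The plan is to show that \(G\) is a subgroup of \(\Inv(A)\) that is connected and open. Then \(G\) is also closed in \(\Inv(A)\), since its complement is a union of open cosets. Being a nonempty connected clopen subset containing \(1_A\), \(G\) must equal the component \(\Invzero(A)\).

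First, \(G\subseteq\Inv(A)\), because each exponential is invertible with \((e^{a})^{-1}=e^{-a}\). The set \(G\) is closed under multiplication by construction. It is closed under inversion because \((e^{a_1}\cdots e^{a_m})^{-1}=e^{-a_m}\cdots e^{-a_1}\). So \(G\) is a subgroup.

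Second, \(G\) is path-connected: the path \(t\mapsto e^{ta_1}\cdots e^{ta_m}\), \(t\in[0,1]\), is continuous and joins \(1_A\) to \(e^{a_1}\cdots e^{a_m}\) inside \(G\). Hence \(G\subseteq\Invzero(A)\).

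Third, and this is the only step with real content, \(G\) is open. If \(\|1_A-x\|<1\), the holomorphic functional calculus, or directly the power series
\[
 \log x=-\sum_{k\geqslant1}\frac{(1_A-x)^k}{k},
\]
gives an element \(b\) with \(e^{b}=x\). This identity holds because it is a formal power-series identity and the series converges absolutely. So the unit ball around \(1_A\) lies in \(G\). For \(g\in G\) and \(\|y-g\|<\|g^{-1}\|^{-1}\), we have \(\|1_A-g^{-1}y\|<1\), hence \(g^{-1}y\in G\) and \(y=g\,(g^{-1}y)\in G\).

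Finally, each coset \(hG\) with \(h\in\Inv(A)\) is open, since left multiplication by \(h\) is a homeomorphism of \(\Inv(A)\). So \(\Inv(A)\setminus G\) is open, and \(G\) is clopen in \(\Inv(A)\). As \(\Invzero(A)\) is connected and meets \(G\), we get \(\Invzero(A)\subseteq G\), which gives equality.

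The expected obstacle is justifying \(e^{\log x}=x\) near \(1_A\). This is routine if we invoke the holomorphic functional calculus: the spectrum of \(x\) lies in the disc \(\{|\lambda-1|<1\}\), where the principal branch of \(\log\) is holomorphic.
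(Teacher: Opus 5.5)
Your proof is correct, but it is organised differently from the paper's.

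The paper argues along paths. Since \(\Inv(A)\) is open in \(A\), it is locally path-connected, so every element of \(\Invzero(A)\) is the endpoint of a path \(u\) from \(1_A\). A fine partition of \([0,1]\) makes each increment \(u(t_{j-1})^{-1}u(t_j)\) close enough to \(1_A\) to have a logarithm, and multiplying these increments together writes \(u(1)\) as a product of exponentials.

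You instead show that the exponential set is an open, path-connected subgroup. The coset argument then makes it clopen in \(\Inv(A)\), so it is the identity component.

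Both routes rest on the same local fact: every \(x\) with \(\|1_A-x\|<1\) is a single exponential. Your rearrangement justification for that fact is sound, since the relevant majorant series sums to \((1-\|1_A-x\|)^{-1}\). You use this fact once, together with translation by \(g\), to get openness. The paper uses it repeatedly along a path.

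What your version buys is that you never need to identify the connected component with the path component. You therefore need neither local path-connectedness nor uniform control of the path and its pointwise inverses on \([0,1]\). You also get, as a by-product, that \(\Invzero(A)\) is open in \(A\).

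What the paper's version buys is constructiveness. From a given path it produces an explicit factorisation of the given element, and the number of exponential factors is controlled by the path's modulus of continuity.
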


\begin{proof}
Every product on the right is joined to \(1_A\) by multiplying the paths
\(t\mapsto e^{ta_j}\).  Conversely, \(\Inv(A)\) is open in \(A\) and
therefore locally path-connected, so every element of \(\Invzero(A)\) is
joined to \(1_A\) by a path \(u:[0,1]\to\Inv(A)\).  A sufficiently fine partition
\(0=t_0<\cdots<t_m=1\) makes
\(u(t_{j-1})^{-1}u(t_j)\) close enough to \(1_A\) to have a logarithm,
defined by its norm-convergent power series.  Multiplying these increments
writes \(u(1)\) as a product of exponentials.
\end{proof}

\begin{lemma}\label{lem:homomorphism_components}
Let \(\theta:A\to B\) be a continuous unital homomorphism of complex unital
Banach algebras.
\begin{enumerate}[label=\textup{(\roman*)}]
\item \(\theta(\Invzero(A))\subseteq\Invzero(B)\).
\item If \(\theta\) is onto, then
      \(\theta(\Invzero(A))=\Invzero(B)\).
\end{enumerate}
Consequently, if \(\theta\) is onto and \(x\in A\) satisfies
\(\theta(x)\notin\Invzero(B)\), then \(x\notin\Invzero(A)\).
\end{lemma}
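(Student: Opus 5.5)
For (i) I will use the description of the identity component from Lemma~\ref{lem:identity_component_exponentials}. Take \(x\in\Invzero(A)\) and write \(x=e^{a_1}\cdots e^{a_m}\). Since \(\theta\) is a continuous unital homomorphism, it commutes with the norm-convergent exponential series:
\[
 \theta(e^{a})=\sum_{k\geqslant0}\theta(a)^k/k!=e^{\theta(a)}.
\]
Hence \(\theta(x)=e^{\theta(a_1)}\cdots e^{\theta(a_m)}\), and this lies in \(\Invzero(B)\) by the same lemma. As an alternative argument, \(\theta\) maps invertibles to invertibles, and it maps a path in \(\Inv(A)\) from \(1_A\) to \(x\) to a path in \(\Inv(B)\) from \(1_B\) to \(\theta(x)\).

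For (ii), the inclusion ``\(\subseteq\)'' is part (i). For the reverse inclusion, take \(y\in\Invzero(B)\) and write \(y=e^{b_1}\cdots e^{b_m}\) using Lemma~\ref{lem:identity_component_exponentials}. Because \(\theta\) is onto, I can choose preimages \(a_j\in A\) with \(\theta(a_j)=b_j\). Then \(x=e^{a_1}\cdots e^{a_m}\) lies in \(\Invzero(A)\), and the computation from (i) gives \(\theta(x)=y\). The point of using exponentials is that lifting elements is trivial by surjectivity, whereas lifting invertibles or paths directly would not be.

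The final assertion is the contrapositive of (i): if \(x\) were in \(\Invzero(A)\), then \(\theta(x)\) would lie in \(\Invzero(B)\). In fact (i) alone suffices here, and surjectivity is not needed. No step is a real obstacle. The only point needing care is the identity \(\theta(e^a)=e^{\theta(a)}\), which relies on the continuity of \(\theta\) to pass it through the infinite sum.
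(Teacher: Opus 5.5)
Your proposal is correct and follows the paper's proof essentially verbatim: part (i) via \(\theta(e^a)=e^{\theta(a)}\) (or by mapping a path), part (ii) by lifting the exponents \(b_j\) through the surjection, and the final assertion as the contrapositive of (i). Your remark that surjectivity is not needed for the final assertion is accurate and agrees with the paper's observation that it is immediate from (i).
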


\begin{proof}
The first assertion follows either by applying \(\theta\) to a path from
\(1_A\) to an element of \(\Invzero(A)\), or from
\(\theta(e^a)=e^{\theta(a)}\).  For the converse inclusion in (ii), write an
arbitrary element of \(\Invzero(B)\) as
\(e^{b_1}\cdots e^{b_m}\).  Choose \(a_j\in A\) with
\(\theta(a_j)=b_j\).  Then
\[
 e^{b_1}\cdots e^{b_m}
   =\theta(e^{a_1}\cdots e^{a_m})\in\theta(\Invzero(A)).
\]
The final assertion is immediate from (i).
\end{proof}

\begin{lemma}\label{lem:function_components}
Let \(K\) be a compact Hausdorff space and let \(B\) be a complex unital
Banach algebra.  Then
\[
       \Inv(C(K,B))=C(K,\Inv(B)).
\]
Moreover, \(f\in\Invzero(C(K,B))\) if and only if the map
\(f:K\to\Inv(B)\) is homotopic, through maps into \(\Inv(B)\), to the
constant map \(1_B\).
\end{lemma}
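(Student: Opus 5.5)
The plan has two parts: first identify \(\Inv(C(K,B))\) pointwise, then read off the identity component from \(\Invzero=\Exp\) (Lemma~\ref{lem:identity_component_exponentials}) together with a homotopy-to-path translation.

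\emph{Invertibles.}
If \(f\) is invertible in \(C(K,B)\) with inverse \(g\), then \(f(k)g(k)=g(k)f(k)=1_B\) for every \(k\in K\). Hence \(f\) takes values in \(\Inv(B)\).

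Conversely, suppose \(f:K\to\Inv(B)\) is continuous. Inversion is continuous on the open set \(\Inv(B)\), so \(g(k)=f(k)^{-1}\) is continuous, and it is the inverse of \(f\) in \(C(K,B)\). This gives \(\Inv(C(K,B))=C(K,\Inv(B))\).

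\emph{Invertible in \(\Invzero\) implies null-homotopic.}
Let \(f\in\Invzero(C(K,B))\). There is a norm-continuous path \(t\mapsto u_t\) in \(\Inv(C(K,B))\) with \(u_0=1\) and \(u_1=f\). Put \(H(k,t)=u_t(k)\). By the first part, \(H\) takes values in \(\Inv(B)\). It is jointly continuous because
\[
\|H(k,t)-H(k',t')\|\leqslant\|u_t-u_{t'}\|_\infty+\|u_{t'}(k)-u_{t'}(k')\|.
\]
So \(H\) is a homotopy from the constant map \(1_B\) to \(f\).

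\emph{Null-homotopic implies \(\Invzero\).}
Let \(H:K\times[0,1]\to\Inv(B)\) be a homotopy with \(H(\cdot,0)=1_B\) and \(H(\cdot,1)=f\). Define \(u_t=H(\cdot,t)\). Because \(K\times[0,1]\) is compact, \(H\) is uniformly continuous in \(t\), uniformly in \(k\). Equivalently, \(t\mapsto u_t\) is continuous into \(C(K,B)\) with the sup norm. This is a path in \(\Inv(C(K,B))\) from \(1\) to \(f\), so \(f\in\Invzero(C(K,B))\).

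If one prefers to avoid paths, the same uniform continuity gives a fine partition \(0=t_0<\cdots<t_m=1\). For each \(j\), the sup norm \(\|u_{t_{j-1}}^{-1}u_{t_j}-1\|_\infty<1\), using that \(k\mapsto H(k,t)^{-1}\) is uniformly bounded on the compact set. Taking logarithms via the power series, as in Lemma~\ref{lem:identity_component_exponentials}, writes \(f\) as a product of exponentials.

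\emph{Main obstacle.}
The only delicate point is the equivalence between joint continuity in \((k,t)\) and continuity of \(t\mapsto u_t\) into the sup norm. This is exactly where compactness of \(K\) is used, and I would state it carefully; the rest is routine.
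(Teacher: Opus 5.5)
Your proof is correct and follows the paper's argument. You identify the invertibles by pointwise inversion, and you use uniform continuity on the compact set $K\times[0,1]$ to pass between homotopies and sup-norm paths. The only cosmetic differences are two. You obtain the path from $1$ to $f$ via Lemma~\ref{lem:identity_component_exponentials}, whereas the paper observes directly that the open set $\Inv(C(K,B))$ is locally path-connected. You also spell out the joint-continuity estimate that the paper leaves implicit.
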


\begin{proof}
Invertibility is pointwise, and continuity of the pointwise inverse follows
from continuity of inversion on \(\Inv(B)\).  Hence
\[
       \Inv(C(K,B))=C(K,\Inv(B)).
\]
The invertible group of every unital Banach algebra is open.  Thus this is
an open subset of the Banach space \(C(K,B)\), so it is locally
path-connected; its connected components are therefore its path components.

A norm-continuous path \(t\mapsto f_t\) in \(C(K,\Inv(B))\) gives the
homotopy \(H(t,x)=f_t(x)\).  Conversely, if
\(H:[0,1]\times K\to\Inv(B)\) is continuous, uniform continuity on the
compact space \([0,1]\times K\) shows that
\(t\mapsto H(t,\cdot)\) is continuous in the uniform norm.  This proves
the asserted equivalence.
\end{proof}

\begin{lemma}\label{lem:scalar_component}
For every \(\lambda\in\C\setminus\{0\}\) and every complex unital Banach
algebra \(A\), the scalar \(\lambda1_A\) belongs to \(\Invzero(A)\).
Consequently, for \(u\in\Inv(A)\),
\[
       u\in\Invzero(A)
       \quad\Longleftrightarrow\quad
       \lambda u\in\Invzero(A).
\]
\end{lemma}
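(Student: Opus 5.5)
The plan is to realise \(\lambda1_A\) explicitly as an exponential and then use the fact that \(\Invzero(A)\) is a group. Since \(\lambda\neq0\), choose a complex logarithm \(\mu\in\C\) with \(e^\mu=\lambda\); such a \(\mu\) exists because the complex exponential maps \(\C\) onto \(\C\setminus\{0\}\). The scalar multiple \(\mu1_A\) commutes with everything, and the power series gives
\[
e^{\mu1_A}=\sum_{k\geqslant0}\frac{\mu^k}{k!}1_A=e^\mu 1_A=\lambda1_A .
\]
By Lemma~\ref{lem:identity_component_exponentials}, with \(m=1\), this shows \(\lambda1_A\in\Invzero(A)\). Alternatively, one can give a direct path: \(t\mapsto e^{t\mu}1_A\), \(t\in[0,1]\), runs through invertible scalars from \(1_A\) to \(\lambda1_A\).

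For the equivalence, first recall that \(\Invzero(A)\) is a subgroup of \(\Inv(A)\). This is immediate from the description in Lemma~\ref{lem:identity_component_exponentials}. A product of two finite products of exponentials is again one. The inverse of \(e^{a_1}\cdots e^{a_m}\) is \(e^{-a_m}\cdots e^{-a_1}\), because \(e^{a}e^{-a}=1_A\) (the elements \(a\) and \(-a\) commute).

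Now let \(u\in\Inv(A)\). If \(u\in\Invzero(A)\), then \(\lambda u=(\lambda1_A)u\) is a product of two elements of \(\Invzero(A)\), so it lies in \(\Invzero(A)\). Conversely, if \(\lambda u\in\Invzero(A)\), apply the same argument with the nonzero scalar \(\lambda^{-1}\): we get \(u=(\lambda^{-1}1_A)(\lambda u)\in\Invzero(A)\).

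There is no genuine obstacle here. The only point needing a sentence of care is the closure of \(\Invzero(A)\) under products and inverses, and the exponential description settles it at once.
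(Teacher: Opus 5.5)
Your proof is correct and follows the same route as the paper: write \(\lambda1_A=e^{\mu1_A}\) for a complex logarithm \(\mu\) of \(\lambda\), then obtain the equivalence from the fact that \(\Invzero(A)\) is a subgroup of \(\Inv(A)\). Your explicit check of the subgroup property via the exponential description in Lemma~\ref{lem:identity_component_exponentials} is a harmless extra detail that the paper simply takes for granted.
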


\begin{proof}
Choose \(z\in\C\) with \(e^z=\lambda\).  Then
\(\lambda1_A=e^{z1_A}\).  The second assertion follows because
\(\Invzero(A)\) is a subgroup of \(\Inv(A)\).
\end{proof}

\begin{lemma}[Jacobson's inverse formula]\label{lem:Jacobson_inverse}
Let \(A\) be a unital algebra and let \(a,b\in A\).  Then \(1-ab\) is
invertible if and only if \(1-ba\) is invertible.  More precisely, if
\(u=(1-ab)^{-1}\), then
\[
       (1-ba)^{-1}=1+bua.
\]
\end{lemma}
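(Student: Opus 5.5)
The plan is a direct algebraic verification.

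First I show that \(1+bua\) is a two-sided inverse of \(1-ba\) whenever \(u=(1-ab)^{-1}\) exists. The only input is the pair of identities
\[
 u(1-ab)=1=(1-ab)u,
 \qquad\text{equivalently}\qquad
 u-uab=1=u-abu .
\]
For the right inverse I expand
\[
 (1-ba)(1+bua)=1-ba+bua-babua=1-ba+b\bigl(u-abu\bigr)a .
\]
Substituting \(u-abu=1\) turns this into \(1-ba+ba=1\).

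For the left inverse I expand symmetrically:
\[
 (1+bua)(1-ba)=1-ba+bua-buaba=1-ba+b\bigl(u-uab\bigr)a=1 .
\]
So \(1-ba\) is invertible with inverse \(1+bua\).

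For the converse direction, suppose \(1-ba\) is invertible. I apply the implication just proved with the roles of \(a\) and \(b\) interchanged. This shows that \(1-ab\) is invertible with inverse \(1+a(1-ba)^{-1}b\), which gives the stated equivalence.

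There is no real obstacle. The only care needed is to factor the middle terms as \(b(\cdot)a\) so that the correct one-sided identity for \(u\) is used in each of the two checks. No completeness or topology is involved, which is consistent with the lemma being stated for arbitrary unital algebras.
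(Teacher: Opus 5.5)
Your proof is correct and is the paper's argument: a direct multiplication using \(u(1-ab)=(1-ab)u=1\) shows that \(1+bua\) is a two-sided inverse of \(1-ba\), and interchanging \(a\) and \(b\) gives the converse. Writing the middle terms as \(b(u-abu)a\) and \(b(u-uab)a\) just makes explicit which one-sided identity each check uses, a step the paper leaves to the reader.
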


\begin{proof}
A direct multiplication, using \(u(1-ab)=(1-ab)u=1\), gives
\[
 (1-ba)(1+bua)=1=(1+bua)(1-ba).
\]
The converse follows by interchanging \(a\) and \(b\).
\end{proof}

We shall apply Lemma~\ref{lem:function_components} with
\(B=M_2(\C)\).  Thus membership in
\(\Invzero(C(K,M_2(\C)))\) is exactly null-homotopy of the corresponding
map \(K\to\GL_2(\C)\).

\section{The Klaja--Ransford element on \texorpdfstring{\(S^4\)}{S4}}

We use the model
\[
  S^4=\{(z_0,z_1,t)\in\C^2\times\R:
          |z_0|^2+|z_1|^2+t^2=1\}.
\]
Define \(a,b\in C(S^4,M_2(\C))\) by
\begin{equation}\label{eq:KR_ab}
 a(z_0,z_1,t)=\frac{1}{1+it}
       \begin{pmatrix} z_0&0\\ z_1&0\end{pmatrix},
 \qquad
 b(z_0,z_1,t)=\frac{1}{1+it}
       \begin{pmatrix} \overline z_0&\overline z_1\\0&0\end{pmatrix}.
\end{equation}
Put
\[
       c=I_2-2ab,
       \qquad
       d=I_2-2ba.
\]
A direct calculation gives
\begin{equation}\label{eq:c_formula}
 c(z_0,z_1,t)=I_2-\frac{2}{(1+it)^2}
      \begin{pmatrix}
        |z_0|^2&z_0\overline z_1\\
        z_1\overline z_0&|z_1|^2
      \end{pmatrix},
\end{equation}
and
\begin{equation}\label{eq:d_formula}
 d(z_0,z_1,t)=
       \begin{pmatrix}\phi(t)&0\\0&1\end{pmatrix},
 \qquad
 \phi(t)=-\left(\frac{1-it}{1+it}\right)^2.
\end{equation}
The following is the topological core of the construction of Klaja and
Ransford \cite{KlajaRansford}.  We give the details needed later.

\begin{proposition}\label{prop:KR}
For the functions \(c,d\) above,
\[
        c\in\Inv(C(S^4,M_2(\C)))\setminus
             \Invzero(C(S^4,M_2(\C)))
\]
and
\[
        d\in\Invzero(C(S^4,M_2(\C))).
\]
In fact, \(d\) is a single exponential.
\end{proposition}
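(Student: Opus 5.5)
Both claims are checked directly, and the statement about \(d\) is the easy one. Invertibility of \(c\) follows from Jacobson (Lemma~\ref{lem:Jacobson_inverse}) once \(d\) is invertible pointwise. Indeed \(|\phi(t)|=1\), so \(d(x)\in\GL_2(\C)\) for every \(x\), and \(1-(\sqrt2 b)(\sqrt2 a)\) invertible forces \(1-(\sqrt2a)(\sqrt2b)=c\) invertible. Lemma~\ref{lem:function_components} then gives \(c,d\in\Inv(C(S^4,M_2(\C)))\).

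For \(d\), write \(\phi(t)=-\bigl(\frac{1-it}{1+it}\bigr)^2\). Since \(t\in[-1,1]\), \(\frac{1-it}{1+it}=e^{-2i\arctan t}\), so \(\phi(t)=e^{i\pi-4i\arctan t}\). Put \(h(z_0,z_1,t)=\operatorname{diag}(i\pi-4i\arctan t,\,0)\). This is continuous on \(S^4\), and \(d=e^{h}\), a single exponential, so \(d\in\Invzero\) by Lemma~\ref{lem:identity_component_exponentials}.

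For \(c\), the plan is to identify its homotopy class in \([S^4,\GL_2(\C)]=\pi_4(U(2))\cong\Z/2\Z\) with the suspended Hopf map. Let \(v=(z_0,z_1)^T\) and \(P=vv^*\), so that \(P=r^2Q\), where \(r^2=|v|^2=1-t^2\) and \(Q\) is the rank-one projection onto \(\C v\) (defined when \(v\neq0\)). Then
\[
c=(I-Q)+\Bigl(1-\tfrac{2(1-t^2)}{(1+it)^2}\Bigr)Q .
\]
A short calculation gives \(1-\frac{2(1-t)(1+t)}{(1+it)^2}\), whose value on the equator-like region \(t=0\) is \(-1\) and at \(t=\pm1\) is \(1\). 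One checks that this scalar \(\mu(t)\) is a loop-like path in \(\C\setminus\{0\}\) from \(1\) to \(1\) winding once around \(0\) as \(t\) runs from \(-1\) to \(1\). So \(c(z,t)=I+(\mu(t)-1)Q_{[z]}\) is exactly the Klaja--Ransford / clutching description. After deforming \(\mu\) to \(e^{i\pi(t+1)}\) (a homotopy in \(\C\setminus\{0\}\) rel endpoints, which keeps \(c\) invertible), \(c\) becomes the map \(S^4=\Sigma S^3\to U(2)\), \((x,s)\mapsto I+(e^{2\pi is}-1)Q_{h(x)}\), where \(h:S^3\to\C P^1=S^2\) is the Hopf map. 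This is the standard representative of the generator of \(\pi_4(U(2))\): it is the composite of \(\Sigma h\) with the map \(\Sigma\C P^1\to U(2)\) that generates \(\pi_3\) on the bottom cell.

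The main obstacle is proving this composite is non-null. I would argue via the fibration \(U(1)\to U(2)\to S^3\) (determinant splitting \(U(2)\cong S^3\times S^1\) as spaces, via \(SU(2)\)). This gives \(\pi_4(U(2))\cong\pi_4(SU(2))=\pi_4(S^3)=\Z/2\), generated by \(\eta\circ\Sigma\)-Hopf. Projecting \(c\) to \(SU(2)\) by dividing by a square root of the determinant (\(\det c=\mu\), which is null-homotopic on the simply connected \(S^4\)), one identifies the \(SU(2)\)-part as a degree-one map \(\Sigma\C P^1=S^3\to S^3\) precomposed with \(\Sigma h\). That composite is \(\Sigma\eta\neq0\) in \(\pi_4(S^3)\), by the classical fact (Freudenthal / Steenrod squares: \(\mathrm{Sq}^2\) detects \(\eta\) stably). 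Lemma~\ref{lem:function_components} then converts non-null-homotopy into \(c\notin\Invzero\). Alternatively, I would simply cite \cite[Theorem~3.1]{KlajaRansford} for this homotopy-theoretic identification and supply only the explicit reductions above.
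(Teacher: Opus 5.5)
Your proposal is correct and reaches the same class as the paper, namely \(\Sigma\eta\neq0\) in \(\pi_4(S^3)\cong\Z/2\Z\). You identify \(c\) with that class by a different mechanism.

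The paper never decomposes \(c\). It takes the normalised second column \(f=pc/\|pc\|:S^4\to S^3\). From the sign of \(\operatorname{Im}\) of the second coordinate it sees that \(f\) respects hemispheres and equals the Hopf map on the equator. It then joins \(f\) to \(Eh\) by a straight-line homotopy, because the two maps are never antipodal. Only the easy implication ``\(c\) null \(\Rightarrow\) \(f\) null'' is needed, and no unitarity, determinant or \(SU(2)\)-splitting enters.

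You instead write \(c=I+(\mu-1)Q_{[z]}\). A one-line computation gives \(\mu=\phi\). So \(c\) and \(d=I+(\phi-1)\operatorname{diag}(1,0)\) are the same loop \(\phi\), clutched along the Hopf-varying line through \(z\) and along a fixed line respectively. This explains conceptually why one is null and the other is not. Since \(|\phi|=1\), \(c\) is already unitary, so the paper's \(\|pc\|\) is in fact identically \(1\). With \(\beta(t)=\pi/2-2\arctan t\) one has exactly
\(c/e^{i\beta}=\cos\beta\,I+i\sin\beta\,(2Q_{[z]}-I)\).
This is the standard homeomorphism \(\Sigma S^2\to SU(2)\) composed with \(\Sigma h\). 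That exact formula is what your ``degree-one'' assertion amounts to. It replaces the paper's non-antipodality homotopy, at the price of the polar retraction \(\GL_2(\C)\to U(2)\) and the splitting \(U(2)\cong SU(2)\times U(1)\).

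One slip needs fixing. Since \(\mu=\phi=e^{i(\pi-4\arctan t)}\), its argument decreases from \(2\pi\) to \(0\) as \(t\) runs from \(-1\) to \(1\), so \(\mu\) winds \(-1\) times. It is therefore not homotopic rel endpoints in \(\C\setminus\{0\}\) to \(e^{i\pi(t+1)}\), which winds \(+1\) times. Either use \(e^{-i\pi(t+1)}\), or drop the deformation altogether: the normalisation by \(e^{i\beta}\) above makes it unnecessary, and the sign is invisible in \(\Z/2\Z\) anyway.
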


\begin{proof}
For \(-1\leqslant t\leqslant1\),
\[
 \phi(t)
 =-\left(\frac{1-it}{1+it}\right)^2
 =\exp\bigl(i(\pi-4\arctan t)\bigr).
\]
Consequently,
\begin{equation}\label{eq:d_single_exp}
\begin{aligned}
 d&=\exp G,\\
 G(z_0,z_1,t)&=
 \begin{pmatrix}i\theta(z_0,z_1,t)&0\\0&0\end{pmatrix},\\
 \theta(z_0,z_1,t)&=\pi-4\arctan t.
\end{aligned}
\end{equation}
Thus \(d\in\Invzero(C(S^4,M_2(\C)))\).  Since
\(d=I_2-(2b)a\) is invertible, Lemma~\ref{lem:Jacobson_inverse}, applied
pointwise or in the function algebra, shows that
\(c=I_2-a(2b)\) is invertible as well.

Let \(p:M_2(\C)\to\C^2\) be projection onto the second column.  Since
\(c(x)\) is invertible, \(pc(x)\neq0\) for every \(x\in S^4\), and hence
\[
       f=\frac{pc}{\|pc\|}:S^4\longrightarrow S^3
\]
is well defined and continuous.  Formula~\eqref{eq:c_formula} gives
\begin{equation}\label{eq:pc_formula}
 pc(z_0,z_1,t)=
 \left(
      -\frac{2z_0\overline z_1}{(1+it)^2},
      1-\frac{2|z_1|^2}{(1+it)^2}
 \right).
\end{equation}
The imaginary part of its second coordinate is
\begin{equation}\label{eq:pc_imaginary_part}
 \operatorname{Im}\left(1-\frac{2|z_1|^2}{(1+it)^2}\right)
       =\frac{4t|z_1|^2}{(1+t^2)^2}.
\end{equation}
It follows that \(f\) maps the closed upper, respectively lower, hemisphere
of \(S^4\) into the closed upper, respectively lower, hemisphere of \(S^3\).
On the equator \(t=0\), one has \(\|pc\|=1\), because
\[
 4|z_0|^2|z_1|^2+
       (|z_0|^2-|z_1|^2)^2=1,
\]
and therefore
\begin{equation}\label{eq:f_equator}
 f(z_0,z_1,0)
   =(-2z_0\overline z_1,|z_0|^2-|z_1|^2).
\end{equation}

The map in \eqref{eq:f_equator} is the Hopf map
\(h:S^3\to S^2\).  Its suspension \(Eh:S^4\to S^3\) is, away from the two
poles and with the continuous extension at them, given by
\begin{equation}\label{eq:suspended_Hopf}
 Eh(z_0,z_1,t)=
 \left(
       \frac{-2z_0\overline z_1}{\sqrt{1-t^2}},
       \frac{|z_0|^2-|z_1|^2}{\sqrt{1-t^2}}+it
 \right).
\end{equation}
It agrees with \(f\) on the equator and sends the open upper, respectively
lower, hemisphere of \(S^4\) into the open upper, respectively lower,
hemisphere of \(S^3\).  In view of \eqref{eq:pc_imaginary_part}, the points
\(f(x)\) and \(Eh(x)\) are never antipodal: off the equator, \(-Eh(x)\)
lies in the open hemisphere opposite to the closed hemisphere containing
\(f(x)\), while on the equator the two maps agree.  Hence
\[
 H_s(x)=\frac{(1-s)f(x)+sEh(x)}
              {\|(1-s)f(x)+sEh(x)\|}
       \qquad(0\leqslant s\leqslant1)
\]
defines a homotopy from \(f\) to \(Eh\).

The Hopf map represents a generator of \(\pi_3(S^2)\).  Freudenthal's
suspension theorem says that
\[
 E:\pi_3(S^2)\longrightarrow\pi_4(S^3)
\]
is surjective; hence the suspension of a Hopf generator represents the
non-zero element of \(\pi_4(S^3)\cong\Z/2\Z\); see
\cite{Hopf,Freudenthal,Hu}.  Thus \(Eh\), and therefore \(f\), is not
null-homotopic.

Finally, suppose that \(c\) belonged to
\(\Invzero(C(S^4,M_2(\C)))\).  By
Lemma~\ref{lem:function_components}, there would be a homotopy
\(C_s:S^4\to\GL_2(\C)\) from \(c\) to \(I_2\).  Every second column
\(pC_s(x)\) is non-zero, so
\[
       (s,x)\longmapsto \frac{pC_s(x)}{\|pC_s(x)\|}
\]
would be a null-homotopy of \(f\), a contradiction.
\end{proof}

\begin{remark}\label{rem:spectra_disk_circle}
In this example the ordinary spectra of \(ab\) and \(ba\) are both the circle
with centre \(1/2\) and radius \(1/2\).  The exponential spectrum of \(ba\)
is that circle, whereas the exponential spectrum of \(ab\) is the closed disc
bounded by it \cite{KlajaRansford}.  Thus the failure is a filling-in
phenomenon, not a failure of the ordinary Jacobson identity.
\end{remark}

\section{Motakis realisations with prescribed diagonal lifts}

For a Banach space \(X\), write
\[
        \Calk(X)=\B(X)/\K(X)
\]
for its Calkin algebra and let
\(\q_X:\B(X)\to\Calk(X)\) be the quotient map.

Motakis' construction replaces a compatible metric on the compactum by an
equivalent one built from a countable uniformly dense \(\Cq\)-linear subspace
of \(C(K)\), where \(\Cq=\Q+i\Q\).  The next formulation incorporates a
prescribed countable family and makes its members Lipschitz for the new metric,
as needed for the multiplicative lifting below.

If \(\Gamma\) is a countable index set and
\((d_\gamma)_{\gamma\in\Gamma}\) is a Schauder basis with coordinate
functionals \((d_\gamma^*)_{\gamma\in\Gamma}\), an operator \(D\) is
\emph{diagonal with respect to \((d_\gamma)\)} if
\[
 d_\gamma^*(Dd_\eta)=0\qquad(\gamma\neq\eta),
\]
or, equivalently, if \(Dd_\gamma=\lambda_\gamma d_\gamma\) for suitable
scalars \(\lambda_\gamma\).

\begin{theorem}[Motakis; prescribed-family form]
\label{thm:Motakis}
Let \(K\) be a compact metrisable space and let
\(\mathcal P\subseteq C(K)\) be countable.  The auxiliary data in Motakis'
construction may be chosen so that the resulting compatible metric \(\rho\)
on \(K\) and the resulting Argyros--Haydon-type Bourgain--Delbaen
\(\mathcal L_\infty\)-space \(X\), with its Schauder basis
\((d_\gamma)_{\gamma\in\Gamma}\), where \(\Gamma\) is countable and the
basis is listed in the rank order of the construction,
have the following properties.
\begin{enumerate}[label=\textup{(\roman*)}]
\item Every member of \(\mathcal P\) is Lipschitz on \((K,\rho)\).
\item Every \(T\in\B(X)\) can be written
\[
       T=D+K_0,
\]
where \(D\) is diagonal with respect to \((d_\gamma)\) and
\(K_0\in\K(X)\).
\item There is a unital Banach-algebra isomorphism
\[
        \Psi:C(K)\longrightarrow\Calk(X).
\]
After an equivalent renorming of \(X\), this isomorphism is isometric.
\item There is a map \(\kappa:\Gamma\to K\) such that, for every
\(f\in\Lip(K,\rho)\), the diagonal prescription
\begin{equation}\label{eq:Motakis_diagonal}
       \widehat f(d_\gamma)=f(\kappa(\gamma))d_\gamma
       \qquad(\gamma\in\Gamma)
\end{equation}
defines an operator \(\widehat f\in\B(X)\).  The map
\[
       \Lip(K,\rho)\longrightarrow\B(X),
       \qquad f\longmapsto\widehat f,
\]
is a unital algebra homomorphism and
\[
       \q_X(\widehat f)=\Psi(f)
       \qquad(f\in\Lip(K,\rho)).
\]
\end{enumerate}
\end{theorem}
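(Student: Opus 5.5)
We will not rebuild Motakis' space. The plan is to go through his construction and isolate the one place where the compatible metric on \(K\) enters, namely the choice of a countable uniformly dense \(\Cq\)-linear subspace \(\mathcal D\subseteq C(K)\) from which the new metric is manufactured. In Motakis' paper, \(K\) carries an equivalent metric for which the members of \(\mathcal D\) are Lipschitz. A typical form is
\[
\rho(x,y)=\sum_{k}2^{-k}\min\{1,|g_k(x)-g_k(y)|\}\cdot c_k ,
\]
or a supremum variant, where \((g_k)\) enumerates a suitable generating family. The first step is therefore to replace \(\mathcal D\) by the \(\Cq\)-linear span of \(\mathcal D\cup\mathcal P\), closed under products and complex conjugation. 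This set is still countable and still uniformly dense. We then run the metric construction with an enumeration that includes every member of \(\mathcal P\). Each \(g\in\mathcal P\) then satisfies \(|g(x)-g(y)|\leqslant C_g\rho(x,y)\), with the weight \(c_k\) chosen so that this holds. Since the family separates points and \(K\) is compact, \(\rho\) is compatible with the topology. This gives (i).

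The second step is to verify that nothing else in Motakis' argument used special features of his original dense family. We will check the following properties.
\begin{enumerate}[label=\textup{(\alph*)}]
\item The Bourgain--Delbaen skeleton \(\Gamma\) and the map \(\kappa:\Gamma\to K\) are built from finite \(\rho\)-nets, or from a countable dense subset. Their properties, such as every point being approximated along the rank order, depend only on \(\rho\) being a compatible metric.
\item Boundedness of \(\widehat f\) for \(f\in\Lip(K,\rho)\) is proved in Motakis by estimating the evaluation functionals \(e^*_\gamma\) along analyses. The bound is of the form \(\|\widehat f\|\leqslant C(\|f\|_\infty+\Lip_\rho f)\), which holds for every Lipschitz \(f\), not only for members of \(\mathcal D\).
\end{enumerate}
Given this, (iv) follows. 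The map \(f\mapsto\widehat f\) is visibly a unital algebra homomorphism on the diagonal, because \(\widehat{fg}d_\gamma=f(\kappa(\gamma))g(\kappa(\gamma))d_\gamma\). The identity \(\q_X(\widehat f)=\Psi(f)\) is then essentially the definition of \(\Psi\) on the dense subalgebra \(\Lip(K,\rho)\), extended by continuity.

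The remaining items are quoted from Motakis' structural theorem, applied to the modified data. Item (ii) is the diagonal-plus-compact decomposition, which comes from the Argyros--Haydon-type analysis of operators on the space. Item (iii) is obtained as follows. The map \(f\mapsto\q_X(\widehat f)\) is a contractive, up to a constant, unital homomorphism from the dense subalgebra \(\Lip(K,\rho)\). It extends to \(C(K)\) because \(\|\q_X(\widehat f)\|\) is controlled by \(\|f\|_\infty\); this is Motakis' essential-norm estimate. By (ii) the extension is onto, and it is injective by the lower estimate \(\|f\|_\infty\lesssim\|\q_X\widehat f\|\). The isometric renorming is also taken from Motakis. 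Since the Stone--Weierstrass and density arguments only require \(\Lip(K,\rho)\) to be a dense self-adjoint subalgebra, enlarging the generating family changes nothing.

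The main obstacle is the second step: confirming that the quantitative estimates in Motakis' construction depend on the metric only through the Lipschitz constants of the multipliers, and not on a fixed enumeration of his family. In particular, the weights \(c_k\) must be chosen so that the metric stays compatible while \(\rho\) remains totally bounded with the required net sizes. If his construction ties the metric to his dense family more rigidly, we would first try proving (i) directly. Given any compatible metric \(\rho_0\) and a countable \(\mathcal P\), the metric
\[
\rho=\rho_0+\sum_{g\in\mathcal P}2^{-n_g}\frac{|g(x)-g(y)|}{1+\|g\|_\infty}
\]
is compatible and makes every \(g\in\mathcal P\) Lipschitz. We would then feed \(\rho\) into Motakis' construction as his input metric, since that construction accepts an arbitrary compatible metric.
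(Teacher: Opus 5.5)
This is essentially the paper's route: you put \(\mathcal P\) into the countable, uniformly dense, \(\Cq\)-linear family \(\mathcal A\) from which Motakis manufactures \(\rho(x,y)=\sum_n2^{-n}\max_{g\in\mathcal U_n}|g(x)-g(y)|\), so that each \(f\in\mathcal P\) is Lipschitz automatically, and then quote Motakis for (ii)--(iv); closure under products or conjugation is harmless but not required, and no weights need choosing, since \(\frac{n}{n+1}f/q\) is a length-one word in \(\mathcal U_n\) for rational \(q>\|f\|_\infty\) and large \(n\). Two side remarks are wrong but not load-bearing: surjectivity of \(\Psi\) does not follow from (ii), because a diagonal-plus-compact decomposition says nothing about the diagonal entries being asymptotically \(f\circ\kappa\) with \(f\) continuous (Motakis gets surjectivity from his Theorems~5.1--5.2 and Corollary~5.3, and (ii) is his Corollary~5.4); and your fallback fails, because his construction discards any input metric in favour of the one built from \(\mathcal A\), whose uniform bound \(|g(x)-g(y)|\leqslant2^n\rho(x,y)\) on \(\mathcal U_n\) is what his Proposition~4.4 uses.
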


\begin{remark}[The auxiliary family in Motakis' construction]
\label{rem:Motakis_auxiliary_hypotheses}
Immediately before Instruction~3.6, Motakis imposes precisely the following
hypotheses:
\begin{equation*}\tag{M}
 \begin{gathered}
 \mathcal A\subseteq C(K)\text{ is countable and uniformly dense},\\
 \alpha f+\beta g\in\mathcal A
 \quad
 (f,g\in\mathcal A,\ \alpha,\beta\in\Cq).
 \end{gathered}
\end{equation*}
See \cite[Section~3.4, paragraph preceding Instruction~3.6]{Motakis}.
No multiplicative closure or invariance under complex conjugation is assumed;
the products defining \(\mathcal U_n\) are formed in \(C(K)\) and need not
belong to \(\mathcal A\).
\end{remark}

\begin{proof}
Since \(C(K)\) is separable, choose a countable uniformly dense set
\(\mathcal D\subseteq C(K)\), and put
\[
 \mathcal A=\operatorname{span}_{\Cq}
       \bigl(\mathcal D\cup\mathcal P\cup\{1_K\}\bigr).
\]
Thus \(\mathcal A\) is countable, uniformly dense in \(C(K)\), and closed
under \(\Cq\)-linear combinations, so it satisfies
condition~\textup{(M)}.  Enumerate
\[
       \mathcal U=\{f\in\mathcal A:\|f\|_\infty\leqslant1\}
       =\{\phi_1,\phi_2,\ldots\}.
\]
For \(n\in\N\), define, exactly as in \cite[Section~3.4]{Motakis},
\begin{equation}\label{eq:Motakis_Un}
 \mathcal U_n=
 \left\{
   \vartheta^k\phi_{i_1}\cdots\phi_{i_k}:
   k\geqslant1,\ 0\leqslant\vartheta\leqslant\frac{n}{n+1},\
   1\leqslant i_1,\ldots,i_k\leqslant n
 \right\}.
\end{equation}
Taking \(\vartheta=0\) shows explicitly that \(0\in\mathcal U_n\).  The
family is increasing and every \(\mathcal U_n\) is contained in the
closed unit ball of \(C(K)\).  For fixed \(n\) and \(k\), the corresponding
set in \eqref{eq:Motakis_Un} is a finite union of compact images of
\([0,n/(n+1)]\).  From any sequence in \(\mathcal U_n\), either the word
lengths have a bounded subsequence, in which case one passes further to a
fixed word and then to a convergent sequence of parameters, or they have a
subsequence tending to infinity, in which case the functions converge
uniformly to \(0\), since their norms are at most \((n/(n+1))^k\).  Thus
\(\mathcal U_n\) is compact.  It is a multiplicative semigroup: the product of words of lengths
\(k\) and \(\ell\), with parameters \(\vartheta\) and \(\eta\), is a word
of length \(k+\ell\) with parameter
\[
       \zeta=(\vartheta^k\eta^\ell)^{1/(k+\ell)}
       \leqslant \frac{n}{n+1}.
\]

Define
\begin{equation}\label{eq:Motakis_metric}
 \rho(x,y)=\sum_{n=1}^\infty 2^{-n}
       \max\{|g(x)-g(y)|:g\in\mathcal U_n\}.
\end{equation}
The evaluation map
\((x,g)\mapsto g(x)\) on \(K\times\mathcal U_n\) is continuous and its
domain is compact.  Equivalently, \(\mathcal U_n\) is equicontinuous by the
Arzel\`a--Ascoli theorem.  Uniform continuity then shows that
\[
 (x,y)\longmapsto\max_{g\in\mathcal U_n}|g(x)-g(y)|
\]
is continuous.  Each summand is a pseudometric, and the series
converges uniformly on \(K\times K\); hence \(\rho\) is a continuous
pseudometric.  It separates points.  Indeed, if \(x\neq y\), choose
\(h\in\mathcal A\) with \(h(x)\neq h(y)\), choose \(q\in\Q\) with
\(q>\|h\|_\infty\), and write \(h/q=\phi_j\).  For every \(n\geqslant j\),
the function \((n/(n+1))\phi_j\) belongs to \(\mathcal U_n\), so
\(\rho(x,y)>0\).  Thus \(\rho\) is a metric.
The identity map from the original compact space \(K\) to the Hausdorff
metric space \((K,\rho)\) is a continuous bijection, and hence a
homeomorphism.  Therefore \(\rho\) is compatible with the given topology.
Moreover,
\begin{equation}\label{eq:Motakis_Un_Lipschitz}
 |g(x)-g(y)|\leqslant 2^n\rho(x,y)
       \qquad(g\in\mathcal U_n).
\end{equation}
Together with \(\|g\|_\infty\leqslant1\), this is precisely the uniform
estimate used in \cite[proof of Proposition~4.4]{Motakis}, where
Proposition~4.1 there is applied with \(L=2^n\) and \(M=1\).  It is not an
additional hypothesis of Instruction~3.6.

Now let \(0\neq f\in\mathcal P\).  Choose
\(q\in\Q\), \(q>\|f\|_\infty\), and write \(f/q=\phi_j\).  For any
\(n\geqslant j\), the function
\((n/(n+1))\phi_j\) belongs to \(\mathcal U_n\), and hence
\[
 \rho(x,y)\geqslant
 2^{-n}\frac{n}{n+1}\frac{|f(x)-f(y)|}{q}.
\]
Thus
\[
 |f(x)-f(y)|\leqslant
 q2^n\frac{n+1}{n}\rho(x,y),
\]
so \(f\) is Lipschitz.  The zero function is Lipschitz trivially.

For completeness, we record the remaining auxiliary choices.  Motakis fixes
positive even integers \((m_j,n_j)_{j\geqslant1}\) satisfying
\begin{equation}\label{eq:Motakis_growth}
 \begin{gathered}
 m_1\geqslant8,\qquad m_{j+1}\geqslant m_j^2,\qquad
 n_1\geqslant m_1^2,\\
 n_{j+1}\geqslant(16n_j)^{\log_2m_{j+1}}
       \qquad(j\geqslant1),
 \end{gathered}
\end{equation}
as in \cite[Section~2.1]{Motakis}.  Compactness of \((K,\rho)\) allows one
to choose increasing finite sets \(K_n\subseteq K\) such that \(K_n\) is an
\(m_n^{-1}2^{-(n+1)}\)-net.  The preliminary index set
\(\overline\Gamma\) formed in Instruction~3.4 of \cite{Motakis} is
countable.  Consequently, the coding injection
\(\sigma:\overline\Gamma\to\N\) may be chosen to satisfy
\begin{equation}\label{eq:Motakis_coding}
       m_{4\sigma(\gamma)}>2^{\operatorname{rank}(\gamma)}
       \qquad(\gamma\in\overline\Gamma).
\end{equation}
Equations \eqref{eq:Motakis_Un}--\eqref{eq:Motakis_coding} give the
auxiliary choices relevant here.  With the remaining standard
Bourgain--Delbaen data from \cite[Instruction~3.6]{Motakis}, that
instruction and the subsequent arguments apply without change.

We now identify the quoted conclusions.  The Schauder basis is part of
\cite[Theorem~A]{Motakis}.  Proposition~4.1 of \cite{Motakis}
proves that \(\widehat f\) is bounded for every
\(f\in\Lip(K,\rho)\).  Section~4.2 defines
\(\Psi(f)=\q_X(\widehat f)\) on Lipschitz functions, and Proposition~4.3
extends \(\Psi\) to a Banach-algebra embedding of \(C(K)\).
Proposition~4.4 supplies the equivalent norm making this embedding
isometric.  Theorems~5.1--5.2 and Corollary~5.3 prove that it is onto,
giving (iii), while Corollary~5.4 is exactly (ii).  Finally, unitality and
multiplicativity in (iv) follow on the basis from
\[
 \widehat{fg}(d_\gamma)
 =f(\kappa(\gamma))g(\kappa(\gamma))d_\gamma
 =\widehat f\,\widehat g(d_\gamma),
 \qquad \widehat{1_K}=I_X.
\]
Together with the preceding argument for \textup{(i)}, this proves the theorem.
Only the choice making \(\mathcal P\) Lipschitz refines Motakis' construction;
all the operator-theoretic conclusions are his.
\end{proof}

We call any output obtained from admissible choices a \emph{Motakis
realisation over \(K\)}.  Such realisations need not be isomorphic: Motakis
obtains continuum many pairwise very incomparable ones by varying the
auxiliary sequences \cite[Proposition~8.7]{Motakis}.  All statements below are
uniform over the admissible choices.

\begin{remark}\label{rem:unbounded_diagonal_map}
The homomorphism \(f\mapsto\widehat f\) in
Theorem~\ref{thm:Motakis} need not be bounded when
\(\Lip(K,\rho)\) is equipped only with the uniform norm; Motakis explicitly
notes this point.  It is the Calkin-valued map
\(f\mapsto\q_X(\widehat f)\) which extends continuously to all of \(C(K)\).
Accordingly, identities involving an infinite power series of lifted
functions must be justified on the basis, rather than by applying continuity
of the lifting map.
\end{remark}

\begin{lemma}\label{lem:diagonal_representative}
Let \(X\) be a Motakis realisation over \(K\) as in
Theorem~\ref{thm:Motakis}.  Every element of \(\Calk(X)\) has a bounded
diagonal representative in \(\B(X)\).
\end{lemma}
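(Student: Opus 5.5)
The lemma follows directly from Theorem~\ref{thm:Motakis}\,(ii). Let \(u\in\Calk(X)\) and pick any \(T\in\B(X)\) with \(\q_X(T)=u\); this is possible because \(\q_X\) is onto. By part~(ii) we may write
\[
T=D+K_0,
\]
where \(D\) is diagonal with respect to \((d_\gamma)\) and \(K_0\in\K(X)\). Then \(D=T-K_0\) lies in \(\B(X)\), so \(D\) is bounded. Moreover
\[
\q_X(D)=\q_X(T)-\q_X(K_0)=u .
\]
Hence \(D\) is a bounded diagonal representative of \(u\).

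The only point to check is that ``diagonal'' in part~(ii) means the notion defined before Theorem~\ref{thm:Motakis}, namely \(d_\gamma^*(Dd_\eta)=0\) for \(\gamma\neq\eta\). It is then automatic that \(Dd_\gamma=\lambda_\gamma d_\gamma\) with
\[
|\lambda_\gamma|=|d_\gamma^*(Dd_\gamma)|\leqslant \|d_\gamma^*\|\,\|D\|\,\|d_\gamma\| .
\]

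One could instead try to avoid part~(ii) and build \(D\) from part~(iv), taking \(\widehat f\) for Lipschitz \(f\) and passing to the limit. That route is harder, because the lifting \(f\mapsto\widehat f\) is not bounded in the uniform norm (Remark~\ref{rem:unbounded_diagonal_map}). Part~(ii), which is Motakis' Corollary~5.4 as quoted, sidesteps this entirely. There is therefore no real obstacle: the lemma is a restatement of (ii) combined with surjectivity of the quotient map.
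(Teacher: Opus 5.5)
Your proposal is correct and follows the paper's own proof: you lift the class through the quotient map, decompose \(T=D+K_0\) using Theorem~\ref{thm:Motakis}(ii), and observe that \(D=T-K_0\) is bounded, diagonal, and has the same Calkin image. Your added bound on the diagonal entries and your remark about avoiding part~(iv) are correct but not needed.
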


\begin{proof}
Given \(\dot T\in\Calk(X)\), choose \(T\in\B(X)\) with
\(\q_X(T)=\dot T\).  Write \(T=D+K_0\) as in
Theorem~\ref{thm:Motakis}(ii).  Then \(D\) is bounded and diagonal, and
\(\q_X(D)=\dot T\).
\end{proof}

For a fixed finite direct-sum norm on \(X^n\), compactness of a matrix
operator is equivalent to compactness of all of its entries, and therefore
\[
       \Calk(X^n)\cong M_n(\Calk(X)).
\]

\begin{lemma}\label{lem:matrix_lift}
Let \(X\) be the Motakis realisation over \(K\) fixed above, let
\(n\in\N\), and let
\(\Phi=(\phi_{ij})\in M_n(\Lip(K,\rho))\).  Define
\[
       \widehat\Phi=(\widehat{\phi_{ij}})_{i,j=1}^n\in\B(X^n).
\]
Then, for \(\Phi,\Theta\in M_n(\Lip(K,\rho))\),
\begin{equation}\label{eq:matrix_lift_multiplicative}
       \widehat{\Phi\Theta}=\widehat\Phi\,\widehat\Theta,
       \qquad
       \widehat{I_n}=I_{X^n}.
\end{equation}
Under the natural identification
\(\Calk(X^n)\cong M_n(\Calk(X))\),
\begin{equation}\label{eq:matrix_lift_quotient}
       \q_{X^n}(\widehat\Phi)=(\Psi(\phi_{ij}))_{i,j=1}^n.
\end{equation}
Moreover, the entries of the pointwise matrix exponential \(e^\Phi\) are
Lipschitz and
\begin{equation}\label{eq:matrix_lift_exponential}
       e^{\widehat\Phi}=\widehat{e^\Phi}.
\end{equation}
\end{lemma}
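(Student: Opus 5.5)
The plan is to reduce every assertion to statements about single basis vectors of \(X^n\), using Theorem~\ref{thm:Motakis}(iv) entrywise. The key point is Remark~\ref{rem:unbounded_diagonal_map}: the lifting map is not continuous, so the exponential identity cannot be obtained by applying it to a power series. Instead, all lifted operators act diagonally on each \(d_\gamma\), and everything will be verified on the basis.

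\textbf{Multiplicativity.} The \((i,k)\) entry of \(\Phi\Theta\) is \(\sum_j\phi_{ij}\theta_{jk}\). This is a finite sum of products of Lipschitz functions on a compact metric space, hence Lipschitz. Since \(f\mapsto\widehat f\) is a unital algebra homomorphism on \(\Lip(K,\rho)\), we get \(\widehat{\sum_j\phi_{ij}\theta_{jk}}=\sum_j\widehat{\phi_{ij}}\,\widehat{\theta_{jk}}\). This is exactly the \((i,k)\) entry of the operator matrix product \(\widehat\Phi\,\widehat\Theta\). Also \(\widehat{I_n}\) has diagonal entries \(\widehat{1_K}=I_X\) and off-diagonal entries \(\widehat0=0\).

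\textbf{Quotient identity.} The identification \(\Calk(X^n)\cong M_n(\Calk(X))\) sends \(\q_{X^n}((T_{ij}))\) to \((\q_X(T_{ij}))\). Hence \eqref{eq:matrix_lift_quotient} follows from \(\q_X(\widehat{\phi_{ij}})=\Psi(\phi_{ij})\) in Theorem~\ref{thm:Motakis}(iv).

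\textbf{Exponential, step 1: Lipschitz entries.} Write \(L\) for a common Lipschitz constant and \(M\) for a common sup bound of the \(\phi_{ij}\), using the operator norm on \(M_n(\C)\). The map \(x\mapsto\Phi(x)\) is then \(L'\)-Lipschitz into \(M_n(\C)\), with \(\|\Phi(x)\|\leqslant M'\). The matrix exponential is Lipschitz on the ball of radius \(M'\), with constant \(e^{M'}\), since
\[
 \|e^A-e^B\|\leqslant\sum_k\frac{k M'^{k-1}}{k!}\|A-B\|=e^{M'}\|A-B\|.
\]
Composing, each entry of \(e^\Phi\) is Lipschitz.

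\textbf{Exponential, step 2: identifying \(e^{\widehat\Phi}\).} Regard \(X^n\) as having the basis vectors \(d_\gamma\otimes e_j\). For fixed \(\gamma\), let \(V_\gamma=\operatorname{span}\{d_\gamma\otimes e_j:1\leqslant j\leqslant n\}\). By \eqref{eq:Motakis_diagonal}, \(\widehat\Phi\) maps \(V_\gamma\) into itself and acts there by the scalar matrix \(\Phi(\kappa(\gamma))\). Hence \(\widehat\Phi^k\) acts on \(V_\gamma\) by \(\Phi(\kappa(\gamma))^k\). The exponential series \(e^{\widehat\Phi}=\sum_k\widehat\Phi^k/k!\) converges in operator norm, so applying it to a vector of \(V_\gamma\) gives a convergent series in the finite-dimensional space \(V_\gamma\). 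Its sum is the action of \(e^{\Phi(\kappa(\gamma))}\). On the other hand, \(\widehat{e^\Phi}\) is a bounded operator by step~1, and by definition it acts on \(V_\gamma\) by \((e^\Phi)(\kappa(\gamma))=e^{\Phi(\kappa(\gamma))}\).

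Two bounded operators agreeing on every basis vector of a space with a Schauder basis coincide. Hence \(e^{\widehat\Phi}=\widehat{e^\Phi}\). The only delicate point is precisely this step: avoiding any appeal to continuity of the lifting. The basis-wise comparison of two known-bounded operators handles it.
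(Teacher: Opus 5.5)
Your proposal is correct and follows essentially the same route as the paper. You apply the homomorphism \(f\mapsto\widehat f\) and the identity \(\q_X(\widehat f)=\Psi(f)\) entrywise, then compare \(e^{\widehat\Phi}\) with \(\widehat{e^\Phi}\) on the \(n\)-dimensional invariant subspaces \(V_\gamma\) and conclude by density and continuity. The only difference is cosmetic: you get the Lipschitz bound for \(x\mapsto e^{\Phi(x)}\) from the termwise estimate \(\|A^k-B^k\|\leqslant kM'^{k-1}\|A-B\|\), whereas the paper uses the integral formula \(e^P-e^Q=\int_0^1e^{(1-s)P}(P-Q)e^{sQ}\,ds\).
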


\begin{proof}
The first two assertions follow entrywise from the multiplicativity and
unitality of \(f\mapsto\widehat f\).  The quotient formula follows in the
same way from \(\q_X(\widehat f)=\Psi(f)\).

Choose any norm on \(M_n(\C)\).  Since \(\Phi(K)\) is bounded and \(\Phi\) is
Lipschitz, the identity
\[
 e^P-e^Q=\int_0^1e^{(1-s)P}(P-Q)e^{sQ}\,ds
       \qquad(P,Q\in M_n(\C))
\]
shows that \(x\mapsto e^{\Phi(x)}\) is Lipschitz.  To prove
\eqref{eq:matrix_lift_exponential}, for each \(\gamma\in\Gamma\) consider
\[
 V_\gamma=\operatorname{span}\{(0,\ldots,0,d_\gamma,0,\ldots,0)\}
          \subseteq X^n,
\]
where the non-zero coordinate ranges over the \(n\) positions.  This is an
\(n\)-dimensional invariant subspace for \(\widehat\Phi\), and the matrix of
the restriction is \(\Phi(\kappa(\gamma))\).  Hence the restrictions of
\(e^{\widehat\Phi}\) and \(\widehat{e^\Phi}\) to every \(V_\gamma\) agree.
They therefore agree on the linear span of the basis of \(X^n\), and by
continuity on all of \(X^n\).
\end{proof}

\section{The operator-algebra counterexample}

We first prove the basic \(S^4\)-case.  The prescribed-stable-rank form of
the example will be obtained as a corollary of the computation in
Section~\ref{sec:stable_rank}.

\begin{theorem}\label{thm:main}
There are a Motakis realisation \(X\) over \(S^4\) and operators
\(S,T\in\B(E)\), where \(E=X^2\), such that
\[
       \frac12\in \varepsilon_{\B(E)}(ST)
       \quad\text{and}\quad
       \frac12\notin \varepsilon_{\B(E)}(TS).
\]
Consequently,
\[
       \varepsilon_{\B(E)}(ST)\setminus\{0\}
       \neq
       \varepsilon_{\B(E)}(TS)\setminus\{0\}.
\]
\end{theorem}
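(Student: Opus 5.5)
We apply Theorem~\ref{thm:Motakis} with \(K=S^4\) and with \(\mathcal P\) the finite set of scalar functions appearing as entries of the Klaja--Ransford matrices, namely
\[
 \frac{z_0}{1+it},\quad \frac{z_1}{1+it},\quad \frac{\overline z_0}{1+it},\quad \frac{\overline z_1}{1+it},
\]
together with \(\theta=\pi-4\arctan t\). This makes every entry of \(a\), \(b\) and \(G\) from \eqref{eq:KR_ab} and \eqref{eq:d_single_exp} Lipschitz on \((S^4,\rho)\). Let \(X\) be the resulting Motakis realisation and put \(E=X^2\). We define
\[
 S=\widehat a,\qquad T=\widehat b\in\B(E)
\]
via Lemma~\ref{lem:matrix_lift}. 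By Lemma~\ref{lem:scalar_component}, \(\tfrac12\in\varepsilon(ST)\) is equivalent to \(I-2ST\notin\Invzero(\B(E))\), and similarly for \(TS\). By \eqref{eq:matrix_lift_multiplicative} we have
\[
 I-2ST=\widehat c,\qquad I-2TS=\widehat d .
\]

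\textbf{The element \(TS\).} Proposition~\ref{prop:KR} gives \(d=e^{G}\) with \(G\in M_2(\Lip(S^4,\rho))\). Then \eqref{eq:matrix_lift_exponential} yields
\[
 \widehat d=\widehat{e^G}=e^{\widehat G}\in\Invzero(\B(E)),
\]
so \(\tfrac12\notin\varepsilon_{\B(E)}(TS)\). This is exactly the place where the basis-wise justification of Lemma~\ref{lem:matrix_lift} is needed, in view of Remark~\ref{rem:unbounded_diagonal_map}.

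\textbf{The element \(ST\).} First, \(\widehat c\) is invertible. This follows either from Jacobson's formula (Lemma~\ref{lem:Jacobson_inverse}) applied to \(I-(2S)T\) in \(\B(E)\), using invertibility of \(I-T(2S)=\widehat d\), or because \(\widehat{c^{-1}}\) is an inverse, the entries of \(c^{-1}\) being Lipschitz. Next, consider the continuous unital homomorphism
\[
 \theta:\B(E)\to\Calk(E)\cong M_2(\Calk(X))\cong M_2(C(S^4))\cong C(S^4,M_2(\C)),
\]
the composite of \(\q_E\), the entrywise isomorphism \(\Psi^{-1}\) from Theorem~\ref{thm:Motakis}(iii), and the canonical identification. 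It is onto. By \eqref{eq:matrix_lift_quotient}, \(\theta(\widehat c)=c\). Proposition~\ref{prop:KR} shows \(c\notin\Invzero(C(S^4,M_2(\C)))\). The final clause of Lemma~\ref{lem:homomorphism_components} then gives \(\widehat c\notin\Invzero(\B(E))\), so \(\tfrac12\in\varepsilon_{\B(E)}(ST)\).

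The main obstacle is purely bookkeeping: checking that the identification \(\Calk(E)\cong C(S^4,M_2(\C))\) is a continuous unital algebra isomorphism carrying \(\q_E(\widehat c)\) to \(c\). Continuity of \(\Psi^{-1}\) holds by the open mapping theorem, and the rest follows entrywise. The topological input is already contained in Proposition~\ref{prop:KR}. The final inequality of exponential spectra away from \(0\) is then immediate, since \(\tfrac12\neq0\).
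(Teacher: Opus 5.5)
Your proposal is correct and follows the paper's proof essentially step for step. It uses the same prescribed family in Theorem~\ref{thm:Motakis}, the identities \(I_E-2ST=\widehat c\) and \(I_E-2TS=\widehat d=e^{\widehat G}\), the Calkin-quotient obstruction via Lemma~\ref{lem:homomorphism_components} and Proposition~\ref{prop:KR}, and the rescaling by \(\tfrac12\) through Lemma~\ref{lem:scalar_component}; the only blemish is cosmetic, namely that you reuse the letter \(\theta\) for the quotient homomorphism although it already denotes \(\pi-4\arctan t\).
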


\begin{proof}
Let \(a,b,c,d\) and \(\theta\) be as in Section~3.  Apply
Theorem~\ref{thm:Motakis} to \(K=S^4\) and to the finite family consisting
of all scalar entries of \(a\) and \(b\), together with \(\theta\).  For the
resulting compatible metric these functions are Lipschitz.  Let \(X\) be
the resulting Motakis realisation over \(S^4\), put \(E=X^2\), and define
\[
       S=\widehat a,
       \qquad
       T=\widehat b
       \qquad\text{in }\B(E).
\]
By Lemma~\ref{lem:matrix_lift},
\begin{equation}\label{eq:lifts_c_d}
       I_E-2ST=\widehat c,
       \qquad
       I_E-2TS=\widehat d.
\end{equation}

Suppose that \(I_E-2ST\in\Invzero(\B(E))\).  Its Calkin image would then
belong to \(\Invzero(\Calk(E))\) by
Lemma~\ref{lem:homomorphism_components}.  Under the Banach-algebra
isomorphisms
\[
 \Calk(E)\cong M_2(\Calk(X))
       \cong M_2(C(S^4))
       \cong C(S^4,M_2(\C)),
\]
formula~\eqref{eq:matrix_lift_quotient} identifies this Calkin image with
\(c\).  This contradicts Proposition~\ref{prop:KR}.  Hence
\begin{equation}\label{eq:not_exp_lift_c}
       I_E-2ST\notin\Invzero(\B(E)).
\end{equation}

Let \(G\) be the matrix function in \eqref{eq:d_single_exp}.  Its entries
are Lipschitz, and Lemma~\ref{lem:matrix_lift} gives
\[
       I_E-2TS=\widehat d
       =\widehat{e^G}=e^{\widehat G}
       \in\Invzero(\B(E)).
\]
In particular, \(I_E-2TS\) is invertible.  Jacobson's inverse formula now
also shows that \(I_E-2ST\) is invertible.  Thus the obstruction in
\eqref{eq:not_exp_lift_c} is a component obstruction.

Finally, \((1/2)I_E\in\Invzero(\B(E))\) by
Lemma~\ref{lem:scalar_component}.  Therefore
\[
 \frac12I_E-ST=\frac12(I_E-2ST)\notin\Invzero(\B(E)),
\]
whereas
\[
 \frac12I_E-TS=\frac12(I_E-2TS)\in\Invzero(\B(E)).
\]
This is the asserted spectral separation.
\end{proof}

\section{Stable rank of the Motakis operator algebras}\label{sec:stable_rank}

We now compute the left and right topological stable ranks of the operator
algebras used above.  For a unital Banach algebra \(A\) and \(m\in\N\), put
\[
  \Lg_m(A)=\{(a_1,\ldots,a_m)\in A^m:
        Aa_1+\cdots+Aa_m=A\}
\]
and
\[
  \Rg_m(A)=\{(a_1,\ldots,a_m)\in A^m:
        a_1A+\cdots+a_mA=A\}.
\]
The left, respectively right, topological stable rank is the least \(m\) for
which \(\Lg_m(A)\), respectively \(\Rg_m(A)\), is dense in \(A^m\), with
value \(\infty\) if no such \(m\) exists
\cite[Definition~1.4]{RieffelStableRank}.  For a unital \(C^*\)-algebra the
involution interchanges them, and we write \(\tsr\) for their common value
\cite[Proposition~1.6]{RieffelStableRank}.  His
Theorem~6.1, however, begins verbatim,
``Let \(A\) be a \(C^*\)-algebra.  Then for any positive integer \(m\),''
and so cannot itself be applied to \(\B(X)\)
\cite[Theorem~6.1]{RieffelStableRank}.  The following lemma is the
Banach-algebraic form of the rectangular-matrix argument in Rieffel's
Definition~6.2, Lemma~6.3, and the upper-bound part of his Theorem~6.1
\cite[Definition~6.2, Lemma~6.3, and pp.~315--316]{RieffelStableRank}.
Rieffel states these results in the \(C^*\)-algebra setting, but this part
of the proof uses only the structure of a unital Banach algebra.  Corach
and Larotonda studied the Banach-manifold and fibration properties of the
corresponding spaces of rectangular unimodular matrices
\cite[Proposition~1.3 and Theorem~1.6]{CorachLarotondaUnimodular}, but we
have not found the density assertion below in their work.  Nica later
recorded the full matrix formula in Banach-algebra generality, using the
left topological stable rank and citing Rieffel
\cite[Remark~3.2(g) and Theorem~7.1]{NicaStableRanks}.  We include the short
argument to make its scope and our one-sided conventions explicit.

\begin{monotonicityremark}
For \(1\leqslant m\leqslant k\),
\[
 \Lg_m(A)\times A^{k-m}\subseteq\Lg_k(A),
 \qquad
 \Rg_m(A)\times A^{k-m}\subseteq\Rg_k(A).
\]
Consequently, density of \(\Lg_m(A)\), respectively of \(\Rg_m(A)\),
implies density at every level \(k\geqslant m\): perturb the first \(m\)
entries and leave the remaining entries fixed, padding the coefficients of
a unimodularity identity by zeros.
\end{monotonicityremark}

\begin{lemma}\label{lem:Banach_matrix_upper_bound}
Let \(A\) be a complex unital Banach algebra with
\(d=\ltsr(A)<\infty\), and let \(p\geqslant q\geqslant1\).
If
\[
          p-q+1\geqslant d,
\]
then the left-invertible elements of \(M_{p,q}(A)\) are dense in
\(M_{p,q}(A)\), where left invertibility means that
\(EC=I_q\) for some \(E\in M_{q,p}(A)\).  Consequently, for every
\(n\geqslant1\),
\begin{equation}\label{eq:Banach_matrix_upper_bound}
 \ltsr M_n(A)
 \leqslant
 \left\lceil\frac{\ltsr(A)-1}{n}\right\rceil+1.
\end{equation}
\end{lemma}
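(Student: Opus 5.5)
The plan is to adapt Rieffel's column-reduction argument, working with the left convention throughout. A matrix \(C\in M_{p,q}(A)\) is left invertible exactly when there is \(E\in M_{q,p}(A)\) with \(EC=I_q\). Left invertibility is an open condition, since \(M_q(A)\) is a Banach algebra and \(E(C+\Delta)=I_q+E\Delta\) is invertible for small \(\Delta\). So it suffices to show that an arbitrary \(C\) can be approximated by a left-invertible matrix. I will argue by induction on \(q\). The key reduction is the following: if the first column \(c\in A^p=M_{p,1}(A)\) is left unimodular, meaning \(a_1c_1+\cdots+a_pc_p=1\), then \(c\) can be completed to an invertible \(p\times p\) matrix up to a left factor. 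Concretely, I aim to find \(U\in\Inv(M_p(A))\) with \(Uc=e_1\). The standard trick is to use the row \(r=(a_1,\ldots,a_p)\) with \(rc=1\) and set \(U\) to be the idempotent-type matrix \(I_p-cr+e_1r\). Its inverse is \(I_p+cr-e_1r\) modified appropriately. More cleanly, one writes \(U=(I_p-(c-e_1)r)\), whose inverse follows from Jacobson's formula (Lemma~\ref{lem:Jacobson_inverse}) applied in \(M_p(A)\), provided \(1-r(c-e_1)=1-1+a_1=a_1\) is invertible. That need not hold, so I will instead use a two-step elementary factorisation: first \(I_p+e_1(r\,\cdot)\)-type moves to reach \((1,c_2,\ldots,c_p)^T\) after adding \(e_1\), then clear the remaining entries. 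This is the standard fact that a unimodular column is carried to \(e_1\) by an invertible matrix after enlarging it by one unit. I will check this step carefully.

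For the induction, start with \(C=(c\mid C')\), where \(c\) is the first column. Since \(p\geqslant p-q+1\geqslant d\), the Monotonicity Remark shows that \(\Lg_p(A)\) is dense. I therefore perturb \(c\) slightly to a left unimodular column. Next I want \(Uc=e_1\), so that \(UC=\begin{pmatrix}1&*\\0&C''\end{pmatrix}\) with \(C''\in M_{p-1,q-1}(A)\). Since \((p-1)-(q-1)+1=p-q+1\geqslant d\), the induction hypothesis lets me perturb \(C''\) to a left-invertible matrix. Block triangularity then gives left invertibility of the perturbed \(UC\), and pulling back by \(U^{-1}\) gives a small perturbation of \(C\) (the size is controlled by \(\|U^{-1}\|\)). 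The base case \(q=1\) is just density of \(\Lg_p(A)\), \(p\geqslant d\).

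The main obstacle is the column-to-\(e_1\) step. A unimodular column of length \(p\) need not be completable to an invertible matrix in general. I will avoid needing completability by passing to \(p\) rather than \(p+1\) in a way that uses the extra room \(p-q+1\geqslant d\). First apply density at level \(p-1\geqslant d\) (valid when \(q\geqslant2\)) to perturb the last \(p-1\) entries \((c_2,\ldots,c_p)\) to a left unimodular tuple, with \(\sum b_ic_i=1\). Then the elementary matrix \(I+e_1\bigl((1-c_1)b\bigr)\) adds \(\sum(1-c_1)b_ic_i\) to the first entry, making it \(1\). Further elementary row operations \(I-\sum_{i\geqslant2}c_ie_ie_1^T\) clear the other entries. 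All of these are invertible (unipotent), so this avoids the issue entirely.

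Finally, the matrix-rank bound \eqref{eq:Banach_matrix_upper_bound} follows by taking \(q=n\) and \(p=nm\) with \(m=\lceil(d-1)/n\rceil+1\). A tuple in \(M_n(A)^m\) stacked vertically is an \(nm\times n\) matrix, and left invertibility of the stack is precisely membership in \(\Lg_m(M_n(A))\). The inequality \(nm-n+1\geqslant d\) holds by the choice of \(m\).
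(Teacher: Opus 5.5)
Your working argument, the one in your third paragraph, is correct and is essentially the paper's proof: perturb the first-column entries below the top one into a left-unimodular tuple, use unipotent row operations to reach \(\bigl(\begin{smallmatrix}1&r\\0&C''\end{smallmatrix}\bigr)\), apply the induction hypothesis to \(C''\in M_{p-1,q-1}(A)\), and get the rank bound by stacking the blocks. The only difference is cosmetic: you perturb all of \(c_2,\ldots,c_p\) into \(\Lg_{p-1}(A)\), which is valid since \(p-1\geqslant p-q+1\geqslant d\) when \(q\geqslant2\), whereas the paper perturbs only the last \(p-q+1\) entries; your earlier attempt to send a full unimodular column to \(e_1\) is rightly discarded and is not needed.
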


\begin{proof}
We prove the density assertion by induction on
\(q\).
For \(q=1\), the Monotonicity Remark shows that
\(\Lg_p(A)\) is dense whenever \(p\geqslant d\).

Now let \(q\geqslant2\), put \(k=p-q+1\), and take
\(C\in M_{p,q}(A)\).  Perturb the last \(k\) entries of the first column,
as little as desired, to a tuple \(y=(y_1,\ldots,y_k)\in\Lg_k(A)\);
the required density follows from
the Monotonicity Remark, since \(k\geqslant d\).
Choose \(c_1,\ldots,c_k\in A\) with
\(\sum_{j=1}^k c_jy_j=1\), and let \(a\) be the \((1,1)\)-entry of the
perturbed matrix.  The elementary row operation
\[
 R_1\longleftarrow R_1+
       \sum_{j=1}^k(1-a)c_jR_{q-1+j}
\]
makes that entry equal to \(1\).  Further elementary row operations
eliminate every other entry in the first column.  Thus, for some
\(L\in\GL_p(A)\), the perturbed matrix \(\widetilde C\) satisfies
\[
       L\widetilde C=
       \begin{pmatrix}1&r\\[1mm]0&D\end{pmatrix},
       \qquad D\in M_{p-1,q-1}(A).
\]
Since
\((p-1)-(q-1)+1=k\geqslant d\), the induction hypothesis permits us to
replace \(D\) by an arbitrarily close left-invertible matrix \(D'\).  If
\(ED'=I_{q-1}\), then
\[
 \begin{pmatrix}1&-rE\\[1mm]0&E\end{pmatrix}
 \begin{pmatrix}1&r\\[1mm]0&D'\end{pmatrix}
 =I_q.
\]
After \(L\) has been fixed, choose \(D'\) so close to \(D\) that
\[
       C'=L^{-1}\begin{pmatrix}1&r\\[1mm]0&D'\end{pmatrix}
\]
is as close to \(\widetilde C\) as desired.  The product displayed above,
with its first factor denoted by \(W\), shows that \(WL\) is a left inverse
for \(C'\).
Since \(\widetilde C\) was an arbitrarily small perturbation of \(C\), this
proves the required density.

Finally, as finite products of copies of \(A\), the spaces \(M_n(A)^m\)
and \(M_{mn,n}(A)\) carry equivalent standard product and matrix norms.
The map which vertically stacks the \(n\)-by-\(n\) blocks is therefore a
linear homeomorphism.  It carries \(\Lg_m(M_n(A))\) exactly onto the set of
left-invertible matrices in \(M_{mn,n}(A)\).  For
\[
       m=\left\lceil\frac{d-1}{n}\right\rceil+1
\]
we have \(mn-n+1\geqslant d\), so the density just proved gives
\eqref{eq:Banach_matrix_upper_bound}.
\end{proof}

\begin{lemma}\label{lem:rank_one_equivalence}
For every complex unital Banach algebra \(A\), the following are equivalent:
\begin{enumerate}[label=\textup{(\roman*)}]
\item \(\ltsr(A)=1\);
\item \(\rtsr(A)=1\);
\item \(\Inv(A)\) is dense in \(A\).
\end{enumerate}
\end{lemma}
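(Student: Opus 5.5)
We will show (i)\(\Leftrightarrow\)(iii); the equivalence (ii)\(\Leftrightarrow\)(iii) is the mirror-image argument, with right ideals and right inverses replacing left ones. Note that \(\Lg_1(A)\) is exactly the set of left-invertible elements of \(A\). Hence (i) says that the left-invertible elements are dense. Since \(\Inv(A)\subseteq\Lg_1(A)\), the implication (iii)\(\Rightarrow\)(i) is immediate.

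For (i)\(\Rightarrow\)(iii), the key point is that a left-invertible element close enough to a dense set of left-invertibles must in fact be invertible. Let \(x\in A\) and \(\varepsilon>0\), and choose a left-invertible \(a\) with \(\|x-a\|<\varepsilon\), say \(ba=1\). Then \(p=ab\) is an idempotent with \(1-p\) annihilating \(a\) from the left: \((1-p)a=0\). If \(p=1\), then \(a\) is invertible and we are done. Otherwise we use density of left-invertibles again near \(a\), in the following form.

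Suppose \(y\) is left invertible with \(\|y-a\|\) small. Then \(by\) is close to \(1\), hence invertible, so \(a\) and \(y\) are \emph{both} left invertible. We would like to conclude that \(y\) is not invertible when \(a\) is not, which gives nothing new. So instead we exploit density at the element \(a^*\)-type perturbation \(a+t(1-p)\)? That does not obviously help either.

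The cleaner route is Rieffel's standard argument \cite[Proposition~3.1]{RieffelStableRank}, which is purely Banach-algebraic and which we would reproduce. Suppose \(a\) is left but not right invertible, so \(1-p\neq0\). Consider \(z=b\). For every \(y\) with \(\|y-b\|<\|a\|^{-1}\), the element \(ya\) is close to \(ba=1\) and hence invertible. So \(y\) is right invertible, with right inverse \(a(ya)^{-1}\). If such a \(y\) were also left invertible, it would be invertible, and then \(a=y^{-1}(ya)\) would be invertible, contradicting \(p\neq1\). Thus no element of the open ball of radius \(\|a\|^{-1}\) about \(b\) is left invertible, contradicting (i).

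Hence every left-invertible element is invertible. Then \(\Inv(A)=\Lg_1(A)\) is dense, proving (iii). Symmetrically, if \(\rtsr(A)=1\), a right-invertible \(a\) with \(ab=1\) that is not invertible yields a ball about \(b\) consisting of left-but-not-right-invertible elements, again a contradiction; this gives (ii)\(\Rightarrow\)(iii).

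The only delicate step is this ``ball of one-sided invertibles'' argument. It is elementary, relying only on openness of \(\Inv(A)\) and the observation that an element which is both left and right invertible is invertible. We expect no real obstacle beyond stating that step carefully.
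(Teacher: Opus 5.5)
Your argument is correct and is essentially the proof of Rieffel's Proposition~3.1, which is all the paper gives here (it simply cites that result): if $ba=1$ with $a$ not invertible, then every $y$ with $\|y-b\|<\|a\|^{-1}$ is right invertible but cannot be left invertible, so the left-invertible elements are not dense; the right-hand case is symmetric. You should delete the abandoned exploratory paragraph (the remarks about $by$ and the ``$a^*$-type perturbation''), since it plays no role in the proof.
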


\begin{proof}
This is \cite[Proposition~3.1]{RieffelStableRank}.
\end{proof}

\begin{lemma}\label{lem:quotient_stable_rank}
Let \(A\) be a unital Banach algebra and let \(J\) be a closed two-sided
ideal.  Then
\[
   \ltsr(A/J)\leqslant\ltsr(A),
   \qquad
   \rtsr(A/J)\leqslant\rtsr(A).
\]
\end{lemma}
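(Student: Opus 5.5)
The plan is to push density of unimodular tuples through the quotient map. We treat the left case; the right case is symmetric, using \(\Rg_m\) in place of \(\Lg_m\). Let \(\pi:A\to A/J\) be the quotient map. If \(\ltsr(A)=\infty\) there is nothing to prove, so fix \(m=\ltsr(A)<\infty\). We show that \(\Lg_m(A/J)\) is dense in \((A/J)^m\).

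The first step is the algebraic fact \(\pi^m(\Lg_m(A))\subseteq\Lg_m(A/J)\). If \(\sum_j c_ja_j=1_A\), then applying the unital homomorphism \(\pi\) gives \(\sum_j\pi(c_j)\pi(a_j)=1_{A/J}\). The coefficients \(\pi(c_j)\) are legitimate, since \(\pi\) is onto.

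The second step is the density transfer. Take \((x_1,\ldots,x_m)\in(A/J)^m\) and \(\varepsilon>0\). By the definition of the quotient norm, choose lifts \(a_j\in A\) with \(\pi(a_j)=x_j\); we do not need norm control on the lifts. Since \(\Lg_m(A)\) is dense, choose \((a_1',\ldots,a_m')\in\Lg_m(A)\) with \(\|a_j'-a_j\|<\varepsilon\) for each \(j\). Then
\[
\|\pi(a_j')-x_j\|\leqslant\|a_j'-a_j\|<\varepsilon,
\]
because \(\pi\) is contractive. By the first step, \((\pi(a_1'),\ldots,\pi(a_m'))\) lies in \(\Lg_m(A/J)\). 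Hence \(\Lg_m(A/J)\) is dense, and \(\ltsr(A/J)\leqslant m\).

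There is no real obstacle here. The only points to watch are the following.
\begin{itemize}
\item The argument uses continuity of \(\pi\); contractivity is convenient but not essential.
\item The argument must take place at the same level \(m\), and the Monotonicity Remark does not even need to be invoked.
\item If \(J=A\), the quotient is the zero algebra. This degenerate case is excluded by unitality, or handled by convention, since we may assume \(J\) is proper.
\end{itemize}
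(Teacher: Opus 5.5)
Your proposal is correct and follows essentially the same route as the paper's proof: lift the tuple, approximate the lift by a member of \(\Lg_m(A)\), and push it back down with the contractive quotient map, which preserves left (respectively right) unimodularity. Your side remarks (surjectivity of \(\pi\) is not actually needed for the coefficients, and the case \(J=A\) is degenerate) are harmless and do not affect the argument.
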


\begin{proof}
Suppose first that \(\ltsr(A)=m<\infty\).  Given a tuple in
\((A/J)^m\), lift it to a tuple in \(A^m\) and approximate the lift by a
member of \(\Lg_m(A)\).  The quotient map is contractive and carries
left-unimodular tuples to left-unimodular tuples.  Hence \(\Lg_m(A/J)\) is
dense in \((A/J)^m\), and \(\ltsr(A/J)\leqslant m\).  The proof for the
right topological stable rank is identical.
\end{proof}

For a bounded operator \(T:Y\to Z\), write
\[
       \alpha(T)=\dim\Ker T,
       \qquad
       \beta(T)=\dim(Z/\Ran T).
\]
We call \(T\) \textit{upper semi-Fredholm} if \(\Ran T\) is closed and
\(\alpha(T)<\infty\).  We call \(T\) \textit{lower semi-Fredholm} if
\(\Ran T\) is closed and \(\beta(T)<\infty\).  Lastly, \(T\) is
\textit{Fredholm} if it is both upper and lower semi-Fredholm.  For a Fredholm
\(T\),
\[
       \ind T=\alpha(T)-\beta(T).
\]
We use the standard facts that the Fredholm index is locally constant and is
invariant under compact perturbations, and that \(I+K\) is Fredholm of index
zero whenever \(K\) is compact.

\begin{proposition}\label{prop:lower_semifred_and_index_zero}
Let \(X,Y\) be Banach spaces and let \(T\in\B(X,Y)\) be lower
semi-Fredholm, with \(\beta(T)\leqslant\alpha(T)\), where
\(\alpha(T)=\infty\) is allowed.  There is a fixed finite-dimensional
subspace \(M\subseteq Y\) such that
\[
        Y=\Ran T\oplus M,
        \qquad
        \dim M=\beta(T),
\]
and, for every \(\eta>0\), there is a finite-rank
\(F\in\B(X,Y)\) satisfying
\[
        \|F\|<\eta,
        \qquad
        \Ran F=M,
\]
such that \(T+F\) is surjective.  In particular,
\[
        \operatorname{rank}F=\beta(T),
        \qquad
        \Ran T\cap\Ran F=\{0\}.
\]
If, in addition, \(T\) is Fredholm of index zero, then \(F\) may be
chosen so that \(T+F\) is an isomorphism.
\end{proposition}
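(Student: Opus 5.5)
Since \(\Ran T\) is closed and of finite codimension \(\beta=\beta(T)\), choose any algebraic complement \(M\) of \(\Ran T\) in \(Y\). It is finite-dimensional, hence closed, so \(Y=\Ran T\oplus M\) topologically, and \(\dim M=\beta\). Fix a basis \(m_1,\ldots,m_\beta\) of \(M\). If \(\beta=0\), take \(F=0\) and there is nothing to prove. Otherwise use \(\beta\leqslant\alpha(T)\) to choose linearly independent vectors \(x_1,\ldots,x_\beta\in\Ker T\). By Hahn--Banach, pick functionals \(x_1^*,\ldots,x_\beta^*\in X^*\) biorthogonal to them, \(x_i^*(x_j)=\delta_{ij}\). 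Then define
\[
 F=\varepsilon\sum_{i=1}^\beta x_i^*(\cdot)\,m_i ,
\]
with \(\varepsilon>0\) so small that \(\|F\|\leqslant\varepsilon\sum_i\|x_i^*\|\,\|m_i\|<\eta\). Its range is exactly \(M\), since \(F x_j=\varepsilon m_j\). So \(\operatorname{rank}F=\beta\) and \(\Ran T\cap\Ran F=\{0\}\).

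Surjectivity of \(T+F\) follows directly. For \(y\in Y\), write \(y=Tx+\sum_j\lambda_j m_j\). Then put
\[
 z=x-\sum_i x_i^*(x)\,x_i+\varepsilon^{-1}\sum_j\lambda_j x_j .
\]
We have \(Tz=Tx\), because each \(x_i\in\Ker T\). We also have \(Fz=\sum_j\lambda_j m_j\), because the first correction kills the \(x_i^*\)-coordinates of \(x\) and the last term contributes \(\varepsilon\cdot\varepsilon^{-1}\lambda_j\). Hence \((T+F)z=y\). The one point to get right is that correction step: choose the preimage so that \(F\) acts only through the kernel vectors. Once that is arranged, nothing beyond linear algebra is needed.

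For the Fredholm index-zero case, \(\alpha(T)=\beta(T)<\infty\), and the same construction applies. \(T+F\) is a finite-rank perturbation of \(T\), so it is Fredholm of index \(0\). It is surjective, so \(\beta(T+F)=0\), which forces \(\alpha(T+F)=0\). Thus \(T+F\) is a bounded bijection, and by the open mapping theorem it is an isomorphism. Alternatively, one can check injectivity directly: if \((T+F)z=0\), then \(Tz\in\Ran T\cap M=\{0\}\), so \(z\in\Ker T\) and \(Fz=0\). The vectors \(x_i\) span \(\Ker T\) here, and \(F\) is injective on that span, so \(z=0\).
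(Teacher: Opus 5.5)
Your proof is correct and is essentially the paper's argument. Your \(F=\varepsilon\sum_i x_i^*(\cdot)\,m_i\) is exactly the paper's \(\delta\,\iota_M R P\), with \(P=\sum_i x_i^*(\cdot)\,x_i\) the projection onto \(E=\operatorname{span}\{x_i\}\subseteq\Ker T\) and \(R x_i=m_i\); your explicit preimage \(z\) spells out the paper's decomposition \(X=E\oplus\Ker P\), and your direct injectivity check in the index-zero case is the one the paper uses.
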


\begin{proof}
Put \(r=\beta(T)\).  Since \(\Ran T\) is closed and has codimension
\(r\), choose a fixed \(r\)-dimensional subspace \(M\subseteq Y\) such
that
\[
        Y=\Ran T\oplus M.
\]
If \(r=0\), take \(F=0\).  In the index-zero case one then also has
\(\alpha(T)=0\), so \(T\) is already an isomorphism.

Suppose that \(r>0\).  Choose an \(r\)-dimensional subspace
\(E\subseteq\Ker T\), a bounded projection \(P:X\to E\), and an
isomorphism \(R:E\to M\).  Given \(\eta>0\), choose \(\delta>0\)
so that
\[
        \delta\|R\|\|P\|<\eta
\]
and define
\[
        F=\delta\,\iota_MRP,
\]
where \(\iota_M:M\hookrightarrow Y\) is the inclusion.  Then
\(\|F\|<\eta\) and \(\Ran F=M\).

Moreover,
\[
        (T+F)(\Ker P)=\Ran T,
        \qquad
        (T+F)(E)=M.
\]
Indeed, \(T(I-P)x=Tx\) for every \(x\in X\), since
\(\Ran P\subseteq\Ker T\), while \(F\) vanishes on \(\Ker P\) and \(T\)
vanishes on \(E\).  Since \(X=E\oplus\Ker P\), it follows that \(T+F\)
is surjective.

If \(T\) is Fredholm of index zero, then
\(\dim E=r=\dim\Ker T\), so \(E=\Ker T\).  If \((T+F)x=0\), then
\[
        Tx=-Fx\in\Ran T\cap M=\{0\}.
\]
Thus \(x\in E\), while \(Fx=0\) and injectivity of \(R\) imply
\(Px=0\).  Hence \(x=0\), and \(T+F\) is an isomorphism.
\end{proof}

\begin{lemma}\label{lem:finite_codimensional_superspace}
Let \(Z\) be a Banach space, let \(M\subseteq Z\) be closed and of finite
codimension, and let \(N\) be a linear subspace with \(M\subseteq N\subseteq Z\).
Then \(N\) is closed and of finite codimension.
\end{lemma}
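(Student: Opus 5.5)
The plan is to reduce to an algebraic complement and then use the standard fact that a finite-codimensional subspace containing a closed finite-codimensional subspace is a finite union-free sum of a closed subspace and a finite-dimensional one. Consider the quotient map \(\pi:Z\to Z/M\). Since \(M\) is closed, \(Z/M\) is a Banach space, and since \(M\) has finite codimension, \(Z/M\) is finite-dimensional.

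Because \(M\subseteq N\), we have \(N=\pi^{-1}(\pi(N))\): if \(z\in Z\) satisfies \(\pi(z)=\pi(n)\) for some \(n\in N\), then \(z-n\in M\subseteq N\), so \(z\in N\). Now \(\pi(N)\) is a linear subspace of the finite-dimensional space \(Z/M\). Every linear subspace of a finite-dimensional normed space is closed. Hence \(N=\pi^{-1}(\pi(N))\) is closed, by continuity of \(\pi\).

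For the codimension, \(\pi\) induces a linear surjection \(Z/N\to (Z/M)/\pi(N)\). It is injective, because \(\pi(z)\in\pi(N)\) implies \(z\in N\) by the identity above. So \(\dim Z/N=\dim(Z/M)-\dim\pi(N)\leqslant\dim Z/M<\infty\).

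There is no real obstacle here. The one point requiring care is the saturation identity \(N=\pi^{-1}(\pi(N))\), which is exactly where the hypothesis \(M\subseteq N\) is used. Closedness of finite-dimensional subspaces is standard, since all norms on a finite-dimensional space are equivalent and such spaces are complete.
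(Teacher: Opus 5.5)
Your proof is correct and is essentially the paper's argument: the paper also notes that \(N/M=\pi(N)\) is closed in the finite-dimensional quotient \(Z/M\), recovers \(N\) as its preimage under the quotient map, and identifies \(Z/N\cong(Z/M)/(N/M)\). Your opening sentence about a ``union-free sum'' plays no role in the argument you actually give and can simply be deleted.
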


\begin{proof}
The quotient \(Z/M\) is finite-dimensional.  The subspace \(N/M\) is
therefore closed in \(Z/M\), and \(N\) is its inverse image under the quotient
map.  Moreover, \(Z/N\cong(Z/M)/(N/M)\) is finite-dimensional.
\end{proof}

\begin{lemma}\label{lem:finite_dimensional_lift}
Let \(H:Y\to Z\) be a bounded surjection and let \(M\subseteq Z\) be
finite-dimensional.  There is a bounded linear map \(R_M:M\to Y\) such that
\[
       H(R_Mz)=z\qquad(z\in M).
\]
\end{lemma}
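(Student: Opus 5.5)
We lift a basis of \(M\) one vector at a time and then extend the resulting assignment linearly. Since \(M\) is finite-dimensional, fix a basis \(z_1,\ldots,z_k\) of \(M\); the case \(M=\{0\}\) is trivial, since we may take \(R_M=0\).

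The key step uses surjectivity of \(H\): for each \(j=1,\ldots,k\) it provides a vector \(y_j\in Y\) with \(Hy_j=z_j\). No open mapping theorem is needed here, because only finitely many preimages are required and we impose no norm bound on the lift.

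We then define \(R_M:M\to Y\) on the basis by \(R_Mz_j=y_j\) and extend linearly, so that
\[
 R_M\Bigl(\sum_{j=1}^k\lambda_jz_j\Bigr)=\sum_{j=1}^k\lambda_jy_j .
\]
By linearity of \(H\),
\[
 H\bigl(R_Mz\bigr)=\sum_{j=1}^k\lambda_jHy_j=\sum_{j=1}^k\lambda_jz_j=z
\]
for every \(z=\sum_j\lambda_jz_j\in M\).

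It remains to check boundedness. Every linear map defined on a finite-dimensional normed space is bounded: all norms on \(M\) are equivalent, so the coordinate functionals \(z\mapsto\lambda_j\) are continuous. This gives the estimate \(\|R_Mz\|\leqslant\bigl(\max_j\|y_j\|\bigr)\sum_j|\lambda_j|\leqslant C\|z\|\) for a suitable constant \(C\).

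No real obstacle arises. The only point needing care is this final continuity, and it is supplied by finite-dimensionality. If one later wanted a norm estimate of the form \(\|R_M\|\leqslant C\|H\|\)-type control, the open mapping theorem would let each \(y_j\) be chosen with \(\|y_j\|\leqslant c\|z_j\|\); the present statement, however, does not require this.
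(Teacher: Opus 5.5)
Your proof is correct and matches the paper's argument: lift a basis of \(M\) using surjectivity of \(H\), extend linearly, and use that every linear map on a finite-dimensional space is bounded. One aside is off, though it plays no role in the proof: in your closing remark, any norm control the open mapping theorem provides comes from the open-mapping constant of \(H\) and the choice of basis of \(M\), not from \(\|H\|\).
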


\begin{proof}
Choose a basis \((m_1,\ldots,m_k)\) of \(M\) and vectors \(y_j\in Y\) with
\(Hy_j=m_j\).  Extend linearly by
\(R_M(\sum_j\lambda_jm_j)=\sum_j\lambda_jy_j\).  Every linear map with
finite-dimensional domain is bounded.
\end{proof}

Whenever a Motakis realisation is used below, we retain the rank order of
the Schauder basis furnished by the construction and relabel it as
\((e_n)_{n=1}^\infty\), with coordinate functionals
\((f_n)_{n=1}^\infty\).  For \(J\subseteq\N\), put
\[
        X_J=\overline{\operatorname{span}}\{e_n:n\in J\}.
\]
An operator \(T\in\B(X)\) is \emph{diagonal with respect to
\((e_n)\)} if
\(
       f_k(Te_n)=0
       \)
\(
    (k\neq n),
\)
or, equivalently, if \(Te_n=\lambda_ne_n\) for suitable scalars
\(\lambda_n\).  For such an operator, let
\[
        N_T=\{n\in\N:Te_n=0\}.
\]

\begin{proposition}\label{prop:diagonal_basic}
Let \(X\) have a Schauder basis \((e_n)_{n=1}^\infty\), with coordinate
functionals \((f_n)_{n=1}^\infty\), and let \(T\in\B(X)\) be diagonal.
Then
\[
        \Ker T=X_{N_T}
\]
and
\[
 \operatorname{span}\{e_n:n\notin N_T\}
 \subseteq\Ran T
 \subseteq X_{\N\setminus N_T}.
\]
Moreover, \((X/\overline{\Ran T})^*\) contains a linearly independent
family indexed by \(N_T\).
\end{proposition}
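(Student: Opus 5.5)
The plan is to argue directly on the basis vectors, using only that $T$ is bounded and that $Te_n=\lambda_ne_n$ with $\lambda_n=f_n(Te_n)$. Write $N=N_T$, so $N=\{n:\lambda_n=0\}$.

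\emph{The kernel.} The inclusion $X_N\subseteq\Ker T$ is immediate. Each $e_n$ with $n\in N$ lies in $\Ker T$, this kernel is a closed subspace, and $X_N$ is the closed span of these vectors. For the reverse inclusion, take $x\in\Ker T$ and expand it as $x=\sum_k f_k(x)e_k$. Boundedness of $T$ lets us apply $T$ termwise, so
\[
 0=Tx=\sum_k\lambda_kf_k(x)e_k .
\]
Applying the coordinate functional $f_k$ gives $\lambda_kf_k(x)=0$ for every $k$. Hence $f_k(x)=0$ whenever $k\notin N$, and the basis expansion of $x$ involves only vectors $e_k$ with $k\in N$. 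Its partial sums lie in $\operatorname{span}\{e_k:k\in N\}$, so $x\in X_N$.

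\emph{The range.} If $n\notin N$, then $e_n=T(\lambda_n^{-1}e_n)$, so $e_n\in\Ran T$. Taking linear combinations gives the left inclusion. For the right inclusion, the same termwise expansion shows that every $Tx=\sum_k\lambda_kf_k(x)e_k$ has vanishing coordinates at the indices in $N$. This series converges, and its partial sums lie in $\operatorname{span}\{e_k:k\notin N\}$. Therefore $Tx\in X_{\N\setminus N}$.

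\emph{The dual family.} From the range inclusion and the closedness of $X_{\N\setminus N}$, we get $\overline{\Ran T}\subseteq X_{\N\setminus N}$. For $n\in N$ and $k\notin N$ we have $f_n(e_k)=0$, so by continuity $f_n$ vanishes on $X_{\N\setminus N}$, and hence on $\overline{\Ran T}$. Each such $f_n$ therefore induces a functional $\tilde f_n\in(X/\overline{\Ran T})^*$. To see that $\{\tilde f_n\}_{n\in N}$ is linearly independent, suppose a finite combination $\sum_{n\in F}c_n\tilde f_n$ vanishes. Evaluating it at the class of $e_m$, for $m\in F$, gives $c_m=0$ by biorthogonality.

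No step here is a genuine obstacle. The only point that needs care is justifying the termwise application of $T$ to a basis expansion, which is where boundedness of $T$ and the convergence of the expansion enter.
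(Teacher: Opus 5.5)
Your proposal is correct and follows essentially the paper's proof: you expand \(Tx=\lim_N TP_Nx=\sum_k\lambda_kf_k(x)e_k\), use uniqueness of coefficients for the kernel and both range inclusions, and let \(f_n\) (\(n\in N_T\)) descend to \((X/\overline{\Ran T})^*\). The only differences are cosmetic. You check linear independence by evaluating at the classes of the \(e_m\), where the paper pulls back through the quotient map to the independent \(f_n\). You also use only \(\overline{\Ran T}\subseteq X_{\N\setminus N_T}\) rather than the equality, which is all that is needed.
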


\begin{proof}
Put \(\lambda_n=f_n(Te_n)\).  Diagonality and uniqueness of basis
coefficients give
\[
        Te_n=\lambda_ne_n,
        \qquad
        N_T=\{n:\lambda_n=0\}.
\]
For \(x\in X\), the basis partial sums
\[
        P_Nx=\sum_{n=1}^N f_n(x)e_n
\]
converge to \(x\).  Since \(T\) is bounded,
\begin{equation}\label{eq:diagonal_action}
 Tx=\lim_{N\to\infty}TP_Nx
   =\lim_{N\to\infty}\sum_{n=1}^N\lambda_nf_n(x)e_n.
\end{equation}
It follows that \(X_{N_T}\subseteq\Ker T\).  Conversely, if \(Tx=0\),
uniqueness of basis coefficients in \eqref{eq:diagonal_action} gives
\(f_n(x)=0\) whenever \(n\notin N_T\), and hence \(x\in X_{N_T}\).

If \(n\notin N_T\), then
\[
        e_n=\lambda_n^{-1}Te_n,
\]
which proves the first range inclusion.  The second follows from
\eqref{eq:diagonal_action}.  Taking closures yields
\[
        \overline{\Ran T}=X_{\N\setminus N_T}.
\]

For \(n\in N_T\), the functional \(f_n\) vanishes on
\(X_{\N\setminus N_T}\), and therefore induces
\(g_n\in(X/\overline{\Ran T})^*\) such that
\[
        g_n\pi=f_n,
\]
where \(\pi:X\to X/\overline{\Ran T}\) is the quotient map.  Since the
coordinate functionals \((f_n)\) are linearly independent, so is
\((g_n)_{n\in N_T}\).
\end{proof}

\begin{lemma}\label{lem:diagonal_semifredholm}
Let \(X\) have a Schauder basis and let \(T\in\B(X)\) be diagonal.  If
\(T\) is upper or lower semi-Fredholm, then it is Fredholm and
\(\ind T=0\).
\end{lemma}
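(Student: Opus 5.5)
Let \(T\) be diagonal with \(Te_n=\lambda_ne_n\). The plan rests on Proposition~\ref{prop:diagonal_basic}, which controls kernel and cokernel by the single index set \(N_T\). First, \(\Ker T=X_{N_T}\), so \(\alpha(T)=|N_T|\), with value \(\infty\) if \(N_T\) is infinite. Second, \((X/\overline{\Ran T})^*\) contains a linearly independent family indexed by \(N_T\). Hence, whenever \(\Ran T\) is closed, \(\beta(T)=\dim(X/\Ran T)\geqslant|N_T|\).

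Now suppose \(T\) is upper semi-Fredholm. Then \(\alpha(T)<\infty\), so \(N_T\) is finite. Since \(\Ran T\) is closed, Proposition~\ref{prop:diagonal_basic} gives
\[
 X_{\N\setminus N_T}\supseteq\Ran T\supseteq\overline{\operatorname{span}}\{e_n:n\notin N_T\}=X_{\N\setminus N_T},
\]
so \(\Ran T=X_{\N\setminus N_T}\). The space \(X_{\N\setminus N_T}\) is the kernel of the finitely many coordinate functionals \(f_n\), \(n\in N_T\). It therefore has codimension exactly \(|N_T|\), because these functionals are linearly independent and \(X=X_{N_T}\oplus X_{\N\setminus N_T}\) via the basis projection. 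Thus \(\beta(T)=|N_T|=\alpha(T)<\infty\), so \(T\) is Fredholm with index \(0\).

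Suppose instead that \(T\) is lower semi-Fredholm. Then \(\Ran T\) is closed and \(\beta(T)<\infty\). The independent family above forces \(|N_T|\leqslant\beta(T)<\infty\), so \(\alpha(T)<\infty\) and \(T\) is upper semi-Fredholm. The previous case then applies and gives \(\ind T=0\).

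The only delicate point is identifying \(\Ran T\) exactly with \(X_{\N\setminus N_T}\). This uses closedness of the range together with the inclusion \(\operatorname{span}\{e_n:n\notin N_T\}\subseteq\Ran T\), so that taking closures yields equality. The codimension count for a finite set of removed basis vectors is routine. Lemma~\ref{lem:finite_codimensional_superspace} is not needed, because closedness of \(\Ran T\) is part of both semi-Fredholm hypotheses.
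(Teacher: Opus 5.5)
Your proof is correct and follows essentially the same route as the paper. You use Proposition~\ref{prop:diagonal_basic} to force \(N_T\) to be finite, identify \(\Ran T=X_{\N\setminus N_T}\) from closedness of the range, and count its codimension as \(|N_T|=\alpha(T)\). The only differences are cosmetic: you reduce the lower semi-Fredholm case to the upper one rather than treating both at once, and you count the codimension with the coordinate functionals where the paper uses the finite-rank projection \(P_T\).
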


\begin{proof}
By Proposition~\ref{prop:diagonal_basic}, \(\alpha(T)<\infty\) if and
only if \(N_T\) is finite, and in that case
\[
        \alpha(T)=|N_T|.
\]
If \(T\) is lower semi-Fredholm, then \(X/\Ran T\) is finite-dimensional
and \(\Ran T\) is closed.  Hence
\(X/\Ran T=X/\overline{\Ran T}\), so the linearly independent quotient
functionals furnished by Proposition~\ref{prop:diagonal_basic} force
\(N_T\) to be finite.  If \(T\) is upper semi-Fredholm, finiteness of
\(\alpha(T)\) gives the same conclusion.

In either case \(\Ran T\) is closed, so the range inclusions in
Proposition~\ref{prop:diagonal_basic} give
\(
        \Ran T=X_{\N\setminus N_T}.
\)
The finite-rank coordinate projection
\[
        P_T=\sum_{n\in N_T}e_n\otimes f_n
\]
has kernel \(X_{\N\setminus N_T}\) and rank \(|N_T|\).  Hence
\(
 \beta(T)
 =\dim(X/\Ran T)
 =\operatorname{rank}P_T
 =|N_T|
 =\alpha(T).
\)
Thus \(T\) is Fredholm and \(\ind T=0\).
\end{proof}

\begin{lemma}\label{lem:diagonal_row}
Let \(X\) have a Schauder basis \((e_k)_{k=1}^\infty\), with coordinate
functionals \((f_k)_{k=1}^\infty\), let
\(D_1,\ldots,D_m\in\B(X)\) be diagonal, and define
\[
       R:X^m\longrightarrow X,
       \qquad
       R(x_1,\ldots,x_m)=\sum_{j=1}^mD_jx_j.
\]
If \(R\) is lower semi-Fredholm, then
\(\alpha(R)\geqslant\beta(R)\).  More precisely, if \(m=1\), then \(R\) is
Fredholm of index zero, whereas if \(m\geqslant2\), then
\(\alpha(R)=\infty\).
\end{lemma}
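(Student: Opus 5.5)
The case \(m=1\) is immediate from Lemma~\ref{lem:diagonal_semifredholm}. Here \(R=D_1\) is diagonal and lower semi-Fredholm, so it is Fredholm of index zero, and in particular \(\alpha(R)=\beta(R)\). For \(m\geqslant2\) the claim \(\alpha(R)=\infty\) gives \(\alpha(R)\geqslant\beta(R)\) at once, since \(\beta(R)<\infty\). It therefore remains to produce infinitely many linearly independent vectors in \(\Ker R\) when \(m\geqslant2\).

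Write \(D_je_k=\lambda^{(j)}_ke_k\) and put
\[
Z=\{k\in\N:\lambda^{(j)}_k=0\text{ for all }j\}.
\]
First I would show that \(Z\) is finite. Every vector of the form \(D_jx_j\) lies in \(X_{\N\setminus N_{D_j}}\) by Proposition~\ref{prop:diagonal_basic}. Hence \(\Ran R\subseteq X_{\N\setminus Z}\), which is the closed span of the \(e_k\) with \(k\notin Z\). For \(k\in Z\) the coordinate functional \(f_k\) vanishes on \(X_{\N\setminus Z}\), and therefore on \(\Ran R\). So \(f_k\) induces a functional on \(Y/\Ran R\), exactly as in the proof of Proposition~\ref{prop:diagonal_basic}. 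These induced functionals are linearly independent, so finiteness of \(\beta(R)\) forces \(|Z|\leqslant\beta(R)<\infty\).

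Now consider the complement \(\N\setminus Z\), which is infinite. For each \(k\notin Z\), some \(j\) has \(\lambda^{(j)}_k\neq0\). Since \(m\geqslant2\), I would construct a kernel vector supported on the single index \(k\). If at least two of the \(\lambda^{(j)}_k\) are nonzero, say \(j_1\) and \(j_2\), take
\[
x=\bigl(\lambda^{(j_2)}_ke_k\bigr)\text{ in slot }j_1\quad\text{and}\quad \bigl(-\lambda^{(j_1)}_ke_k\bigr)\text{ in slot }j_2 .
\]
Then \(Rx=\lambda^{(j_1)}_k\lambda^{(j_2)}_ke_k-\lambda^{(j_2)}_k\lambda^{(j_1)}_ke_k=0\). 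If exactly one coefficient \(\lambda^{(j_1)}_k\) is nonzero, pick any \(j_2\neq j_1\) and take \(x=e_k\) in slot \(j_2\); then \(Rx=\lambda^{(j_2)}_ke_k=0\). In either case \(x\neq0\) and \(x\in\Ker R\). The vectors obtained for distinct \(k\) are supported on distinct basis vectors \(e_k\), so they are linearly independent. Since \(\N\setminus Z\) is infinite, this gives \(\alpha(R)=\infty\).

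This is all elementary. The one point needing care is the finiteness of \(Z\), which uses the description of \(\Ran R\) through Proposition~\ref{prop:diagonal_basic}. Strictly speaking, even that step is unnecessary: \(\N\setminus Z\) is always infinite because \(Z\) is finite, and in any case, if \(Z\) were infinite, the vectors \(e_k\) placed in the first slot for \(k\in Z\) would already give infinitely many independent kernel vectors. So the construction on \(\N\setminus Z\) together with this observation covers every case.
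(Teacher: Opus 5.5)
Your proof is correct and follows essentially the paper's argument. You show that the common zero set \(Z\) is finite because the functionals \(f_k\), \(k\in Z\), descend to linearly independent functionals on \(X/\Ran R\); here \(Y/\Ran R\) in your text should read \(X/\Ran R\). Then, for \(m\geqslant2\), you build a kernel vector supported on each \(e_k\), \(k\notin Z\): you write it out explicitly by cases, while the paper takes a non-zero element of the kernel of \(c\mapsto\sum_j c_j\lambda_{j,k}\) on \(\C^m\).

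Your closing remark is also right. Such a vector exists for every \(k\), so \(\alpha(R)=\infty\) holds for any diagonal row with \(m\geqslant2\), and the finiteness of \(Z\), which the paper also proves, is not needed for that part.
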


\begin{proof}
Since \(R\) is lower semi-Fredholm, \(\Ran R\) is closed and
\(X/\Ran R\) is finite-dimensional.
Write \(D_je_k=\lambda_{j,k}e_k\), and set
\[
       N_R=\{k\in\N:\lambda_{1,k}=\cdots=\lambda_{m,k}=0\}.
\]
If \(N_R\) were infinite, the bounded functionals \(f_k\), \(k\in N_R\),
would vanish on \(\Ran R\) and descend to infinitely many linearly
independent bounded functionals on \(X/\Ran R\), a contradiction.  Hence
\(N_R\) is finite.

For \(m=1\), apply Lemma~\ref{lem:diagonal_semifredholm}.  Suppose
\(m\geqslant2\).  For each \(k\notin N_R\), the non-zero functional
\[
       (c_1,\ldots,c_m)\longmapsto
       \sum_{j=1}^m c_j\lambda_{j,k}
\]
on \(\C^m\) has a non-zero kernel.  Choose
\(c^{(k)}=(c_{1,k},\ldots,c_{m,k})\neq0\) in this kernel and put
\[
       u_k=(c_{1,k}e_k,\ldots,c_{m,k}e_k)\in X^m.
\]
Then \(u_k\in\Ker R\).  The vectors \(u_k\), \(k\notin N_R\), are linearly
independent, so \(\Ker R\) is infinite-dimensional.
\end{proof}

\begin{lemma}\label{lem:left_invertible_column_lift}
Let \(K\) be compact metrisable, let \(X\) be an arbitrary Motakis
realisation over \(K\), put \(\mathcal A=\B(X)\),
\(\mathcal I=\K(X)\), and let
\(q:\mathcal A\to\mathcal A/\mathcal I\) be the quotient map.  Suppose that
\((\dot C_1,\ldots,\dot C_m)\in\Lg_m(\mathcal A/\mathcal I)\), and let
\(G_1,\ldots,G_m\in\mathcal A\) satisfy
\[
       q(G_j)=\dot C_j\qquad(1\leqslant j\leqslant m).
\]
Then, for every \(\eta>0\), there are finite-rank operators
\(P_1,\ldots,P_m\) with
\[
       \max_j\|P_j\|<\eta
\]
such that the column operator
\[
       G+P:X\longrightarrow X^m,
       \qquad
       (G+P)x=((G_1+P_1)x,\ldots,(G_m+P_m)x)
\]
is left-invertible.
\end{lemma}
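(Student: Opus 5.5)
The plan is to dualise the problem: left-invertibility of the column \(G+P:X\to X^m\) will follow from surjectivity of a row operator \(R:X^m\to X\) with \(R(G+P)=I_X\).

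\textbf{Step 1: diagonal representatives.} By Lemma~\ref{lem:diagonal_representative}, choose coefficients \(\dot E_1,\ldots,\dot E_m\in\Calk(X)\) with \(\sum_j\dot E_j\dot C_j=1\). For each \(j\), pick a bounded diagonal representative \(D_j\in\B(X)\) of \(\dot E_j\). Then
\[
\sum_j D_jG_j=I_X+K_1
\]
for some compact \(K_1\).

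\textbf{Step 2: the row operator.} Put
\[
R:X^m\to X,\qquad R(x_1,\ldots,x_m)=\sum_jD_jx_j .
\]
Its range contains \(\Ran(I+K_1)\), which is closed and of finite codimension because \(I+K_1\) is Fredholm. Lemma~\ref{lem:finite_codimensional_superspace} therefore shows that \(R\) is lower semi-Fredholm. Lemma~\ref{lem:diagonal_row} then gives \(\alpha(R)\geqslant\beta(R)\): if \(m=1\), \(R\) is Fredholm of index zero; if \(m\geqslant2\), \(\alpha(R)=\infty\).

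\textbf{Step 3: making \(R\) surjective.} Apply Proposition~\ref{prop:lower_semifred_and_index_zero} to obtain a finite-rank \(F\) such that \(R'=R+F\) is surjective. We do not need \(F\) small here, since \(R'\) only serves as a left inverse in the end.

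\textbf{Step 4 (the main obstacle): correcting the column by small finite-rank perturbations.} Now \(R'G=I+K_2\) with \(K_2=K_1+FG\) compact. The task is to find finite-rank \(P\) of small norm with \(R'(G+P)\) invertible; then
\[
(R'(G+P))^{-1}R'
\]
is a left inverse of \(G+P\).

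First approximate the compact \(K_2\). Since \(X\) has a basis, compacts are approximable by finite rank, so choose finite-rank \(K_3\) with \(\|K_2-K_3\|<1/2\). Then \(V=I+K_2-K_3\) is invertible, and
\[
R'G=V+K_3 .
\]
We want a finite-rank \(P\) with \(R'P=-K_3\), but this need not be small.

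To handle the size, replace \(K_3\) by \(K_3(I-Q_N)\) plus a small error, where \(Q_N\) are the basis projections. The remaining term \(K_3Q_N\) is still finite rank but not small, so this does not resolve it. The better idea is the following. The operator \(I+K_2\) is Fredholm of index zero, and by Proposition~\ref{prop:lower_semifred_and_index_zero} there is a finite-rank \(F'\) of arbitrarily small norm with \(I+K_2+F'\) invertible. It remains to realise \(F'\) as \(R'P\) with \(\|P\|\) small. Since \(R'\) is a bounded surjection, Lemma~\ref{lem:finite_dimensional_lift} applied to \(M=\Ran F'\) gives a bounded \(R_M\), and we set
\[
P=-\,R_MF'\quad\text{(up to sign, to match } R'P=F'\text{)}.
\]
This has \(\|P\|\leqslant\|R_M\|\|F'\|\).

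The delicate point is that \(R_M\) depends on \(M\), and \(M\) depends on \(F'\). Proposition~\ref{prop:lower_semifred_and_index_zero} fixes \(M\) independently of \(\eta\), so first fix \(M\) and \(R_M\), then choose \(\eta\) small enough that \(\|R_M\|\eta\) is below the target. Finally, the components \(P_j\) of \(P\) are finite rank and satisfy \(\|P_j\|<\eta\).
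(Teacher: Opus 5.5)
Your proposal is correct and follows the paper's proof essentially step for step: diagonal representatives of the Calkin coefficients give a lower semi-Fredholm diagonal row, a finite-rank correction makes it surjective, and the fixed complement \(M\) from Proposition~\ref{prop:lower_semifred_and_index_zero} is lifted through the surjective row via Lemma~\ref{lem:finite_dimensional_lift} before the size of \(F'\) is chosen. Only cosmetic points remain: take simply \(P=R_MF'\) (no minus sign), so that \(R'P=F'\); bound the components by \(\|P_j\|\leqslant\|\pi_jR_M\|\,\|F'\|\) as the paper does; and delete the abandoned detour through finite-rank approximation of \(K_2\).
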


\begin{proof}
Choose \(\dot H_1,\ldots,\dot H_m\in\mathcal A/\mathcal I\) with
\(\sum_j\dot H_j\dot C_j=1\), and use
Lemma~\ref{lem:diagonal_representative} to choose diagonal representatives
\(H_j\in\mathcal A\).  Let
\[
       H:X^m\to X,
       \qquad
       H(x_1,\ldots,x_m)=\sum_{j=1}^mH_jx_j.
\]
Then \(H\) is a diagonal row and
\[
       HG=I_X+K_0
\]
for some compact \(K_0\).  The range of \(I_X+K_0\) is closed and has finite
codimension, and it is contained in \(\Ran H\).  By
Lemma~\ref{lem:finite_codimensional_superspace}, \(H\) is lower
semi-Fredholm.  Lemma~\ref{lem:diagonal_row} and Proposition~\ref{prop:lower_semifred_and_index_zero} give a finite-rank row
\(L:X^m\to X\) such that
\(
       H'=H+L
\)
is onto.  Since \(L\) is finite-rank, \(H'G=I_X+K_1\) for a compact
\(K_1\), and this operator is Fredholm of index zero.

Apply Proposition~\ref{prop:lower_semifred_and_index_zero} to the
Fredholm operator \(U=H'G\).  It gives a fixed finite-dimensional
subspace \(M\subseteq X\) and, for every \(\delta>0\), a finite-rank
operator \(F:X\to M\), with \(\|F\|<\delta\), such that \(U+F\) is
invertible.  Since \(H'\) is onto, Lemma~\ref{lem:finite_dimensional_lift}
gives
\[
        R_M:M\to X^m,
        \qquad
        H'(R_Mz)=z\quad(z\in M).
\]
Let \(\pi_j:X^m\to X\) be the coordinate maps and put
\[
        C=\max_{1\leqslant j\leqslant m}\|\pi_jR_M\|.
\]
Choose \(F\) with \(\|F\|<\eta/(1+C)\), set \(P=R_MF\), and write
\(P_j=\pi_jP\).  Then \(P\) is finite-rank,
\[
        \max_j\|P_j\|<\eta,
\]
and
\[
 H'(G+P)=H'G+H'R_MF=U+F\in\Inv(\B(X)).
\]
Thus \((U+F)^{-1}H'\) is a left inverse of \(G+P\).
\end{proof}

\begin{proposition}\label{prop:left_stable_upper}
Let \(K\) be compact metrisable and let \(X\) be an arbitrary Motakis
realisation over \(K\).  Then
\[
       \ltsr\B(X)\leqslant\tsr C(K).
\]
\end{proposition}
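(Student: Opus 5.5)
We prove \(\ltsr\B(X)\leqslant m\) with \(m=\tsr C(K)\), assuming \(m<\infty\) since otherwise there is nothing to prove. We must show that \(\Lg_m(\B(X))\) is dense in \(\B(X)^m\). The plan is to approximate in the Calkin algebra first, lift the approximation, and then correct it by a small finite-rank perturbation using Lemma~\ref{lem:left_invertible_column_lift}.

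Fix \((T_1,\ldots,T_m)\in\B(X)^m\) and \(\varepsilon>0\). We may use the renorming of Theorem~\ref{thm:Motakis}(iii), under which \(\Psi\) is isometric; this changes the norm only up to equivalence, so density is unaffected.

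\begin{enumerate}[label=\textup{(\arabic*)}]
\item Since \(\Psi:C(K)\to\Calk(X)\) is an isometric unital isomorphism of Banach algebras, \(\ltsr\Calk(X)=\tsr C(K)=m\). Indeed, \(C(K)\) is a commutative \(C^*\)-algebra, so its left and right ranks coincide.
\item Hence \(\Lg_m(\Calk(X))\) is dense in \(\Calk(X)^m\). Choose \((\dot C_1,\ldots,\dot C_m)\in\Lg_m(\Calk(X))\) with \(\|\dot C_j-\q_X(T_j)\|<\varepsilon/2\) for each \(j\).
\item By the definition of the quotient norm, pick lifts \(G_j\in\B(X)\) with \(\q_X(G_j)=\dot C_j\) and \(\|G_j-T_j\|<\varepsilon/2\). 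To do this, choose a lift \(G_j'\) of \(\dot C_j-\q_X(T_j)\) of norm less than \(\varepsilon/2\) and set \(G_j=T_j+G_j'\).
\item Apply Lemma~\ref{lem:left_invertible_column_lift} with \(\eta=\varepsilon/2\). It gives finite-rank operators \(P_j\), each of norm less than \(\varepsilon/2\), such that the column operator \(G+P:X\to X^m\) is left-invertible.
\item A left inverse of the column operator is a row \((W_1,\ldots,W_m)\) with \(\sum_jW_j(G_j+P_j)=I_X\). This says exactly that \((G_1+P_1,\ldots,G_m+P_m)\in\Lg_m(\B(X))\), and each entry lies within \(\varepsilon\) of \(T_j\).
\end{enumerate}

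The main point to check is step~(1): that the stable rank is transported by the Banach-algebra isomorphism \(\Psi\). This holds because \(\Lg_m\) is defined algebraically and density is a topological property, and \(\Psi\) is a linear homeomorphism that preserves unimodular tuples in both directions. The substantive Fredholm work is already contained in Lemma~\ref{lem:left_invertible_column_lift}, so the remaining steps are bookkeeping with the quotient norm.
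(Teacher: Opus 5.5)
Your proof is correct and follows the paper's argument essentially verbatim. You transport density of left-unimodular tuples through \(\Psi\), approximate in the Calkin algebra, lift within the quotient norm, and then invoke Lemma~\ref{lem:left_invertible_column_lift} to make the column left-invertible by small finite-rank perturbations. The only difference is your optional renorming to make \(\Psi\) isometric, which is harmless. The paper omits it, noting that density transfers through any topological isomorphism and working directly in the quotient norm.
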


\begin{proof}
There is nothing to prove if \(\tsr C(K)=\infty\).  Put
\(m=\tsr C(K)<\infty\), \(\mathcal A=\B(X)\), and
\(\mathcal I=\K(X)\).  Fix
\((T_1,\ldots,T_m)\in\mathcal A^m\) and \(\eta>0\).  Topological stable rank is
invariant under topological Banach-algebra isomorphisms and under equivalent
algebra norms.  Thus the isomorphism
\(\Psi:C(K)\to\mathcal A/\mathcal I\) transfers density
of unimodular tuples; it need not be isometric.  All quantitative
approximations below are taken in the quotient norm of
\(\mathcal A/\mathcal I\).
Let \(q:\mathcal A\to\mathcal A/\mathcal I\) be the quotient map.

Since \(\Lg_m(\mathcal A/\mathcal I)\) is dense, choose
\((\dot A_1,\ldots,\dot A_m)\in\Lg_m(\mathcal A/\mathcal I)\) such that
\[
       \max_j\|\dot A_j-q(T_j)\|_{\mathcal A/\mathcal I}<\eta/4.
\]
By the definition of the quotient norm, the \(\dot A_j\)'s have
representatives \(A_j\in\mathcal A\) satisfying
\begin{equation}\label{eq:left_stable_initial_approximation}
       \max_j\|A_j-T_j\|<\eta/2.
\end{equation}
Apply Lemma~\ref{lem:left_invertible_column_lift} to the representatives
\(A_1,\ldots,A_m\), with \(\eta/2\) in place of \(\eta\).  It gives
finite-rank operators \(P_1,\ldots,P_m\) such that
\[
       \max_j\|P_j\|<\eta/2
\]
and the column associated with
\((A_1+P_1,\ldots,A_m+P_m)\) is left-invertible.  Hence this tuple
belongs to \(\Lg_m(\mathcal A)\) and, by
\eqref{eq:left_stable_initial_approximation}, lies within \(\eta\) of
\((T_1,\ldots,T_m)\).
\end{proof}

\begin{proposition}\label{prop:right_stable_upper}
Let \(K\) be compact metrisable and let \(X\) be an arbitrary Motakis
realisation over \(K\).  Then
\[
       \rtsr\B(X)\leqslant\tsr C(K).
\]
\end{proposition}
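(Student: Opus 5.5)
We follow the proof of Proposition~\ref{prop:left_stable_upper}, exchanging rows and columns and using surjectivity where left invertibility was used before. Put \(m=\tsr C(K)<\infty\); the case \(m=\infty\) is trivial. Set \(\mathcal A=\B(X)\), \(\mathcal I=\K(X)\), and let \(q\) be the quotient map. Since \(\mathcal A/\mathcal I\cong C(K)\) is commutative, \(\Rg_m(\mathcal A/\mathcal I)=\Lg_m(\mathcal A/\mathcal I)\) is dense. Given \((T_1,\ldots,T_m)\) and \(\eta>0\), we first choose a right-unimodular \((\dot A_j)\) within \(\eta/4\) of \((q(T_j))\) in the quotient norm. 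We then pick representatives \(A_j\) with \(\max_j\|A_j-T_j\|<\eta/2\). It remains to prove the row analogue of Lemma~\ref{lem:left_invertible_column_lift}: there are finite-rank \(P_j\) with \(\max_j\|P_j\|<\eta/2\) such that the row \(A+P:X^m\to X\), \((x_j)\mapsto\sum_j(A_j+P_j)x_j\), is right-invertible. This gives \(\sum_j (A_j+P_j)B_j=I_X\), where \(B_j\) are the coordinates of a right inverse, so the tuple lies in \(\Rg_m(\mathcal A)\).

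For the row lemma, choose \(\dot H_j\) with \(\sum_j\dot A_j\dot H_j=1\). By Lemma~\ref{lem:diagonal_representative} we take diagonal representatives \(H_j\), and let \(H:X\to X^m\) be the diagonal column \(x\mapsto(H_jx)_j\). Then \(AH=I_X+K_0\) with \(K_0\) compact. Now \(\Ker H\subseteq\Ker(I+K_0)\) is finite-dimensional. For diagonal \(H_j\), \(\Ker H=X_{N}\) with \(N=\{k:\lambda_{j,k}=0\ \forall j\}\), so \(N\) is finite. Next we check that \(H\) has closed range. Indeed, \(\|x\|\lesssim\|Hx\|+\|K_0x\|\) shows \(H\) is upper semi-Fredholm by the standard compact-perturbation criterion. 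Let \(F_0=\sum_{k\in N}e_k\otimes f_k\) and replace \(H_1\) by \(H_1+F_0\), still diagonal and a finite-rank change. Then \(H'=(H_1+F_0,H_2,\ldots,H_m)\) is injective with closed range, hence bounded below, and still \(AH'=I+K_1\) with \(K_1\) compact. So \(U=AH'\) is Fredholm of index zero.

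By Proposition~\ref{prop:lower_semifred_and_index_zero}, \(U\) has a fixed finite-dimensional complement \(M\) of \(\Ran U\), and for each \(\delta>0\) there is a finite-rank \(F\) with \(\Ran F=M\) and \(\|F\|<\delta\) such that \(U+F\) is invertible. We want the perturbation on the left of \(H'\): we seek \(P:X^m\to X\) with \(PH'=F\). Since \(H'\) is bounded below, \(H'\) is an isomorphism onto the closed subspace \(\Ran H'\subseteq X^m\), and \(F(H')^{-1}\) is a finite-rank operator defined on \(\Ran H'\). The key point is to extend it to all of \(X^m\) with control on the norm. A finite-rank operator extends by Hahn--Banach applied to each coordinate functional in a fixed basis of \(M\). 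This extension costs a constant \(C\) depending only on \(M\) and \(\|(H')^{-1}\|\), not on \(\delta\). Indeed, write \(F=\sum_i m_i\otimes\phi_i\) with \(\|\phi_i\|\le c_M\|F\|\) using a fixed basis of \(M\). Extend \(\phi_i\circ(H')^{-1}\) by Hahn--Banach, giving \(\|P\|\le C\|F\|\). Choosing \(\delta<\eta/(2(1+C))\), we get \((A+P)H'=U+F\) invertible, so \(H'(U+F)^{-1}\) is a right inverse of \(A+P\), and \(P_j=P\iota_j\) are finite-rank with norm below \(\eta/2\).

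The main obstacle is the dualisation: the argument in Lemma~\ref{lem:left_invertible_column_lift} used surjectivity of the diagonal row via Lemma~\ref{lem:diagonal_row}. In the row lemma we must instead show that the diagonal column \(H\) is upper semi-Fredholm, and then repair its finite kernel by a diagonal finite-rank change. The delicate steps are verifying closed range of \(H\) from \(AH=I+K_0\), and keeping the extension constant \(C\) independent of \(\delta\). The latter is guaranteed because \(M\) is fixed by Proposition~\ref{prop:lower_semifred_and_index_zero}.
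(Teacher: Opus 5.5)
Your proof is correct, but it takes a different route from the paper's.

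\textbf{The paper's route.} The paper does not prove a row analogue of Lemma~\ref{lem:left_invertible_column_lift}. Instead it uses commutativity of the Calkin algebra a second time:
\begin{enumerate}[label=\textup{(\arabic*)}]
\item It chooses $\dot C_j$ with $\sum_j q(A_j)\dot C_j=1$ and observes that $(\dot C_1,\ldots,\dot C_m)\in\Lg_m(\mathcal A/\mathcal I)$.
\item It applies Lemma~\ref{lem:left_invertible_column_lift} to diagonal representatives of the $\dot C_j$. This gives, after a finite-rank perturbation, a column $G$ with an actual left inverse $L=(L_1,\ldots,L_m)$.
\item With $W=\sum_jA_jG_j=I_X+K_2$ and a small finite-rank $F$ making $W+F$ invertible, it sets $P_j=FL_j$, so that $\sum_j(A_j+P_j)G_j=W+F$.
\end{enumerate}

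\textbf{Your route.} You dualise the column lemma directly:
\begin{enumerate}[label=\textup{(\arabic*)}]
\item You obtain upper semi-Fredholmness of the diagonal coefficient column $H$ from $AH=I_X+K_0$, not from Lemma~\ref{lem:diagonal_row}.
\item You remove the finite kernel $X_N$ by a diagonal finite-rank correction.
\item The left inverse $L$ is replaced by a Hahn--Banach extension of $F(H')^{-1}$ from the closed, a priori uncomplemented, range of $H'$. The constant $C$ is independent of $\delta$ because $M$ and $H'$ are fixed before $\delta$ is chosen.
\end{enumerate}
A posteriori, $(U+F)^{-1}(A+P)$ is a left inverse of $H'$, so your construction also ends with a left-invertible column.

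\textbf{Comparison.} The paper's route is shorter once the column lemma is available, and it needs no extension argument because $L$ is explicit. Yours is self-contained and bypasses the infinite-dimensional-kernel mechanism for diagonal rows. It also uses commutativity of $\mathcal A/\mathcal I$ only to get density of right-unimodular tuples, not at the coefficient stage, so it is a genuine mirror image of Lemma~\ref{lem:left_invertible_column_lift}.
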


\begin{proof}
Again suppose that \(m=\tsr C(K)<\infty\), and put
\(\mathcal A=\B(X)\), \(\mathcal I=\K(X)\).  Fix a tuple
\((T_1,\ldots,T_m)\in\mathcal A^m\) and \(\eta>0\).  As in the preceding proof,
density is transferred through \(\Psi\), while all estimates are made in the
quotient norm.  Let \(q:\mathcal A\to\mathcal A/\mathcal I\) be the quotient
map.  Choose
\((\dot A_1,\ldots,\dot A_m)\in\Rg_m(\mathcal A/\mathcal I)\) such that
\[
       \max_j\|\dot A_j-q(T_j)\|_{\mathcal A/\mathcal I}<\eta/4,
\]
and choose representatives \(A_j\in\mathcal A\) satisfying
\[
       q(A_j)=\dot A_j,
       \qquad
       \max_j\|A_j-T_j\|<\eta/2.
\]
Since
\(\mathcal A/\mathcal I\cong C(K)\) is commutative, choose
\(\dot C_j\in\mathcal A/\mathcal I\) with
\[
       \sum_{j=1}^m q(A_j)\dot C_j=1.
\]
The tuple \((\dot C_1,\ldots,\dot C_m)\) belongs to
\(\Lg_m(\mathcal A/\mathcal I)\), because
the same identity may be read with the factors reversed.  Choose diagonal
representatives \(G_j\) of the \(\dot C_j\)'s.  By
Lemma~\ref{lem:left_invertible_column_lift}, after finite-rank perturbations
which do not alter the Calkin classes, we may assume that the column
\[
       G:X\to X^m,
       \qquad
       Gx=(G_1x,\ldots,G_mx)
\]
has a left inverse \(L=(L_1,\ldots,L_m):X^m\to X\).  Thus
\(LG=I_X\).

The quotient identity gives
\[
       \mathsf R G=\sum_{j=1}^mA_jG_j=I_X+K_2,
       \qquad
       \mathsf R=(A_1,\ldots,A_m):X^m\to X,
\]
for a compact \(K_2\).  Set \(W=I_X+K_2\).  Since \(W\) is Fredholm of
index zero, Proposition~\ref{prop:lower_semifred_and_index_zero} gives
arbitrarily small finite-rank operators \(F\) for which \(W+F\) is
invertible.  Put
\[
       C=\max_{1\leqslant j\leqslant m}\|L_j\|
\]
and choose such an \(F\) with \(\|F\|<\eta/[2(1+C)]\).  Put
\(P_j=FL_j\).  Then each \(P_j\) is finite-rank,
\(\max_j\|P_j\|<\eta/2\), and
\[
       \sum_{j=1}^m(A_j+P_j)G_j
       =\mathsf R G+FLG
       =W+F\in\Inv(\mathcal A).
\]
Consequently, the column \(G(W+F)^{-1}\) is a right inverse of the row
\(\mathsf R+P=(A_1+P_1,\ldots,A_m+P_m)\).  This tuple therefore belongs to
\(\Rg_m(\mathcal A)\) and lies within \(\eta\) of the original tuple.
\end{proof}

\begin{theorem}\label{thm:motakis_exact_stable_rank}
Let \(K\) be compact metrisable and let \(X\) be any Motakis realisation
over \(K\).
Then
\[
       \ltsr\B(X)=\rtsr\B(X)=\tsr C(K).
\]
Consequently,
\[
   \ltsr\B(X)=\rtsr\B(X)=
   \begin{cases}
      \lfloor\dim K/2\rfloor+1, & \dim K<\infty,\\[1mm]
      \infty, & \dim K=\infty.
   \end{cases}
\]
\end{theorem}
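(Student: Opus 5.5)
The plan is to squeeze both ranks between two copies of \(\tsr C(K)\). For the upper bounds, Propositions~\ref{prop:left_stable_upper} and~\ref{prop:right_stable_upper} already give
\[
\ltsr\B(X)\leqslant\tsr C(K),\qquad \rtsr\B(X)\leqslant\tsr C(K).
\]
For the lower bounds, apply Lemma~\ref{lem:quotient_stable_rank} to the closed ideal \(\K(X)\):
\[
\ltsr\Calk(X)\leqslant\ltsr\B(X),\qquad \rtsr\Calk(X)\leqslant\rtsr\B(X).
\]
Next, transfer the quotient's ranks to \(C(K)\). By Theorem~\ref{thm:Motakis}(iii), \(\Psi:C(K)\to\Calk(X)\) is a unital Banach-algebra isomorphism. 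It is bounded with bounded inverse by the open mapping theorem, so it carries dense sets of unimodular tuples onto dense sets, in both directions. Unimodularity is purely algebraic, so \(\Psi\) maps \(\Lg_m(C(K))\) onto \(\Lg_m(\Calk(X))\), and likewise for \(\Rg_m\). Hence \(\ltsr\Calk(X)=\ltsr C(K)=\tsr C(K)\). Since \(C(K)\) is a commutative \(C^*\)-algebra, its left and right ranks coincide by Rieffel's Proposition~1.6. The same reasoning gives \(\rtsr\Calk(X)=\tsr C(K)\). Combining the four inequalities gives \(\ltsr\B(X)=\rtsr\B(X)=\tsr C(K)\), including the case where both sides are \(\infty\).

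The explicit formula then reduces to the classical computation \(\tsr C(K)=\lfloor\dim K/2\rfloor+1\) when \(\dim K<\infty\), and \(\tsr C(K)=\infty\) otherwise. This is Rieffel's Proposition~1.7, resting on Vaserstein's identification of \(\tsr C(K)\) with the Bass stable rank. I would cite it rather than reprove it. If one wanted a sketch, \(\Lg_m(C(K))\) consists of maps \(K\to\C^m\setminus\{0\}\). Density of such maps for every \(K\) of dimension at most \(2m-2\) is a general-position statement, the real codimension of \(0\) in \(\C^m\) being \(2m\). For the failure of density when \(\dim K\geqslant 2m-1\), one takes a copy of a cube \([0,1]^{2m-1}\) inside \(K\), or uses an essential map onto \(S^{2m-1}\), together with the degree argument that a map \(D^{2m}\to\C^m\) equal to the identity on the boundary sphere cannot be perturbed off the origin. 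In the infinite-dimensional case, \(\dim K=\infty\) forces this failure at every level \(m\).

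The only step needing care is the transfer through \(\Psi\) when it is not isometric. This is handled, as already noted in the proof of Proposition~\ref{prop:left_stable_upper}, by the fact that density is a purely topological property. So the main obstacle is really bookkeeping: making sure the lower bound uses the quotient ranks of \(\Calk(X)\), not of \(M_n\) of it. The substantive work, the lifting argument, has already been done in the two preceding propositions.
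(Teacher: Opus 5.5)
Your argument is correct and is the paper's proof. The upper bounds come from Propositions~\ref{prop:left_stable_upper} and~\ref{prop:right_stable_upper}. The lower bounds come from Lemma~\ref{lem:quotient_stable_rank} applied to \(\K(X)\), combined with the fact that the topological algebra isomorphism \(\Psi\) preserves density of unimodular tuples. The explicit formula is Rieffel's Proposition~1.7.

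The one slip is in your optional sketch, whose thresholds are off by one. Maps \(K\to\C^m\setminus\{0\}\) are dense exactly when \(\dim K\leqslant 2m-1\); for instance, the invertibles are dense in \(C([0,1])\). Failure of density needs \(\dim K\geqslant 2m\), and your own \(D^{2m}\) degree argument is the correct witness, not a cube \([0,1]^{2m-1}\). As written, the sketch would give \(\lceil\dim K/2\rceil+1\), which contradicts the formula you cite. Since you rely on Rieffel's result rather than on the sketch, the proof itself is unaffected.
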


\begin{proof}
The Calkin quotient and Lemma~\ref{lem:quotient_stable_rank} give
\[
       \tsr C(K)\leqslant\ltsr\B(X),
       \qquad
       \tsr C(K)\leqslant\rtsr\B(X).
\]
The reverse inequalities are Propositions~\ref{prop:left_stable_upper} and
\ref{prop:right_stable_upper}.  Rieffel's Proposition~1.7 applies to every
compact Hausdorff space; it does not assume finite covering dimension.
Equivalently, for each finite \(m\),
\[
 \Lg_m(C(K))\text{ is dense in }C(K)^m
 \quad\Longleftrightarrow\quad
 \dim K\leqslant 2m-1.
\]
It follows directly from the definition that
\[
 \tsr C(K)=
 \begin{cases}
  \lfloor\dim K/2\rfloor+1,&\dim K<\infty,\\
  \infty,&\dim K=\infty,
 \end{cases}
\]
as claimed \cite[Proposition~1.7]{RieffelStableRank}.
\end{proof}

\begin{corollary}\label{cor:matrix_amplified_stable_rank}
Let \(K\) be compact metrisable, let \(X\) be any Motakis realisation over
\(K\), and let \(n\geqslant1\).
Then
\[
   \ltsr\B(X^n)=\rtsr\B(X^n)
   =
   \begin{cases}
      \displaystyle
      \left\lceil\dfrac{\lfloor\dim K/2\rfloor}{n}\right\rceil+1,
        & \dim K<\infty,\\[3mm]
      \infty, & \dim K=\infty.
   \end{cases}
\]
\end{corollary}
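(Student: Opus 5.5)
Since the index set is unchanged, \(\B(X^n)\cong M_n(\B(X))\) as Banach algebras, with equivalent norms, and \(\Calk(X^n)\cong M_n(\Calk(X))\cong M_n(C(K))\cong C(K,M_n(\C))\). I would prove matching lower and upper bounds.

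\emph{Lower bound.} Lemma~\ref{lem:quotient_stable_rank}, applied to the Calkin quotient, gives
\[
\ltsr\B(X^n)\geqslant\tsr M_n(C(K))\quad\text{and}\quad\rtsr\B(X^n)\geqslant\tsr M_n(C(K)).
\]
I then invoke Rieffel's computation of the stable rank of matrix algebras over \(C(K)\): \(\tsr M_n(C(K))=\lceil(\tsr C(K)-1)/n\rceil+1\), with the value \(\infty\) if and only if \(\tsr C(K)=\infty\) (Rieffel, Theorem~6.1). This is a statement about \(C^*\)-algebras, so it applies to \(C(K,M_n(\C))\). With \(\tsr C(K)-1=\lfloor\dim K/2\rfloor\) from Theorem~\ref{thm:motakis_exact_stable_rank}, this gives the lower bound. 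The case \(\dim K=\infty\) is then immediate.

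\emph{Upper bound for the left rank.} Apply Lemma~\ref{lem:Banach_matrix_upper_bound} to \(A=\B(X)\), where \(\ltsr(A)=\lfloor\dim K/2\rfloor+1\) by Theorem~\ref{thm:motakis_exact_stable_rank}. Then \eqref{eq:Banach_matrix_upper_bound} gives exactly \(\lceil\lfloor\dim K/2\rfloor/n\rceil+1\) as an upper bound for \(\ltsr M_n(\B(X))=\ltsr\B(X^n)\). This uses that stable rank is invariant under equivalent algebra norms, as already remarked in the proof of Proposition~\ref{prop:left_stable_upper}.

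\emph{Upper bound for the right rank.} This is the main obstacle, because Lemma~\ref{lem:Banach_matrix_upper_bound} is one-sided and \(\B(X)\) has no involution. My plan is to run the mirror-image argument for right-invertible matrices in \(M_{q,p}(A)\), using column operations in place of row operations. The base case is \(\Rg_p(A)\) dense for \(p\geqslant\rtsr(A)\), which holds by the Monotonicity Remark. In the induction step, perturb the last \(k\) entries of the first row to a right-unimodular tuple, make the \((1,1)\)-entry equal to \(1\) by a column operation, clear the rest of the first row and column, and recurse; the block-triangular right-inverse formula is the transpose of the one displayed in that proof. Every step uses only the unital Banach-algebra structure.

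Alternatively, \(\B(X)^{\mathrm{op}}\) is a unital Banach algebra with \(\ltsr(A^{\mathrm{op}})=\rtsr(A)\), and \(M_n(A)^{\mathrm{op}}\cong M_n(A^{\mathrm{op}})\) via the transpose. So the left-sided lemma applied to \(A^{\mathrm{op}}\) gives the right-sided bound
\[
\rtsr M_n(A)\leqslant\left\lceil\frac{\rtsr(A)-1}{n}\right\rceil+1,
\]
and Theorem~\ref{thm:motakis_exact_stable_rank} supplies \(\rtsr(A)=\lfloor\dim K/2\rfloor+1\). I would present this opposite-algebra argument, checking only that transpose combined with reversal of multiplication is an algebra isomorphism and is isometric for the standard matrix norms. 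Combining this with the lower bound completes the proof.
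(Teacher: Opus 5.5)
Your proposal is correct and follows the paper's proof: the lower bound comes from the Calkin quotient \(M_n(C(K))\), the left upper bound from Lemma~\ref{lem:Banach_matrix_upper_bound}, and the right upper bound from passing to \(\B(X)^{\mathrm{op}}\) via the transpose. The only difference is how you get \(\tsr M_n(C(K))\): you take it directly from Rieffel's Theorem~6.1, including at the value \(\infty\). The paper instead gets the finite case via Bass stable rank (Rieffel's Corollary~2.4, Herman--Vaserstein, and Vaserstein's Theorem~3), and proves \(\tsr M_n(C(K))=\infty\) for \(\dim K=\infty\) by an elementary first-column argument showing \(\ltsr(A)\leqslant mn\) whenever \(\ltsr M_n(A)=m\).
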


\begin{proof}
Put \(\mathcal A=\B(X)\) and \(\mathcal I=\K(X)\).  Block-matrix representations yield
topological Banach-algebra isomorphisms
\[
       \B(X^n)\cong M_n(\mathcal A),
       \qquad
       \K(X^n)\cong M_n(\mathcal I),
\]
and hence
\[
       \Calk(X^n)\cong M_n(\mathcal A/\mathcal I)\cong M_n(C(K)).
\]
Since the involution on the \(C^*\)-algebra \(M_n(C(K))\) interchanges
left- and right-unimodular tuples, Lemma~\ref{lem:quotient_stable_rank}
gives
\begin{equation}\label{eq:matrix_calkin_lower_bounds}
 \tsr M_n(C(K))\leqslant\ltsr\B(X^n),
 \qquad
 \tsr M_n(C(K))\leqslant\rtsr\B(X^n).
\end{equation}

Suppose first that \(d=\dim K<\infty\), and put
\(q=\lfloor d/2\rfloor\).  For every unital \(C^*\)-algebra \(B\),
Rieffel proved
\[
       \operatorname{Bsr}(B)\leqslant\tsr(B),
\]
in Corollary~2.4~\cite[Corollary~2.4]{RieffelStableRank}.  The unnumbered
main theorem of Herman and Vaserstein gives the reverse inequality
\cite[unnumbered main theorem]{HermanVaserstein}.  Thus
\[
       \operatorname{Bsr}(B)=\tsr(B).
\]
Vaserstein's Theorem~3~\cite[Theorem~3]{Vaserstein} states that, if \(R\)
is a unital ring and
\(s=\operatorname{Bsr}(R)<\infty\), then
\[
 \operatorname{Bsr}M_n(R)
 =\left\lceil\frac{s-1}{n}\right\rceil+1.
\]
Combining these statements with Rieffel's
formula \(\tsr C(K)=q+1\) gives
\begin{equation}\label{eq:matrix_function_stable_rank}
       \tsr M_n(C(K))
       =\left\lceil\frac{q}{n}\right\rceil+1.
\end{equation}
For the arbitrary unital Banach algebra \(\mathcal A=\B(X)\),
Lemma~\ref{lem:Banach_matrix_upper_bound} gives the estimate
\[
 \ltsr M_n(\mathcal A)
 \leqslant
 \left\lceil\frac{\ltsr(\mathcal A)-1}{n}\right\rceil+1.
\]
Consequently, Theorem~\ref{thm:motakis_exact_stable_rank} yields
\[
 \ltsr\B(X^n)
 =\ltsr M_n(\mathcal A)
 \leqslant
 \left\lceil\frac{q}{n}\right\rceil+1.
\]
For clarity, the opposite-algebra identification is the anti-isomorphism
\[
 \tau:M_n(\mathcal A)\longrightarrow M_n(\mathcal A^{\mathrm{op}}),
 \qquad
 \tau\bigl((x_{ij})\bigr)=(x_{ji}),
 \qquad
 \tau(UV)=\tau(V)\tau(U).
\]
It gives
\[
 M_n(\mathcal A)^{\mathrm{op}}\cong M_n(\mathcal A^{\mathrm{op}}),
 \qquad
 \ltsr(\mathcal A^{\mathrm{op}})=\rtsr(\mathcal A).
\]
Applying the same estimate to \(\mathcal A^{\mathrm{op}}\)
gives the identical upper bound for \(\rtsr\B(X^n)\).  Together with
\eqref{eq:matrix_calkin_lower_bounds} and
\eqref{eq:matrix_function_stable_rank}, this proves the formula when
\(d<\infty\).

Now let \(\dim K=\infty\).  We show directly that
\(\tsr M_n(C(K))=\infty\), without applying a matrix formula at the value
\(\infty\).  More generally, suppose that \(A\) is a unital Banach algebra
and \(m=\ltsr M_n(A)<\infty\).  Given an arbitrary \(mn\)-tuple in \(A\),
place its entries in the first columns of \(m\) matrices in \(M_n(A)\),
putting zeros in the remaining columns.  Approximate this \(m\)-tuple of matrices
by \((Y_1,\ldots,Y_m)\in\Lg_m(M_n(A))\), and choose matrices \(B_j\) such
that
\[
       \sum_{j=1}^m B_jY_j=I_n.
\]
The \((1,1)\)-entry is the identity
\[
 \sum_{j=1}^m\sum_{r=1}^n
       (B_j)_{1r}(Y_j)_{r1}=1_A.
\]
Hence the \(mn\) entries in the first columns of the \(Y_j\)'s form a
member of \(\Lg_{mn}(A)\), arbitrarily close to the prescribed tuple.  Thus
\(\ltsr(A)\leqslant mn\).  Applying the contrapositive to \(A=C(K)\), for
which \(\tsr C(K)=\infty\), proves the claim.  The two inequalities in
\eqref{eq:matrix_calkin_lower_bounds} now force
\[
       \ltsr\B(X^n)=\rtsr\B(X^n)=\infty.
\]
\end{proof}

\begin{remark}
As a heuristic consistency check on the finite-dimensional formula, an
\(m\)-tuple in \(M_n(C(K))\) is left-unimodular exactly when its vertical
\(mn\)-by-\(n\) block matrix has rank \(n\) at every point of \(K\).  The
rank-deficient determinantal variety has real codimension
\(2(mn-n+1)\), which predicts the same dimension threshold.  This count is
not used as a proof of the lower bound.
\end{remark}

\begin{proof}[Proof of Theorem B]
The assertion for \(n=1\), including the identification with
\(\tsr C(K)\), is Theorem~\ref{thm:motakis_exact_stable_rank}.  The formula
for every \(n\geqslant1\) is
Corollary~\ref{cor:matrix_amplified_stable_rank}.  Finally,
\(\tsr C(K)=1\) if and only if \(\dim K\leqslant1\).
\end{proof}

\begin{corollary}\label{cor:prescribed_stable_rank_counterexamples}
For every \(r\in\{2,3,\ldots\}\cup\{\infty\}\), there are a Banach space
\(E_r\) and operators \(S_r,T_r\in\B(E_r)\) such that
\[
       \ltsr\B(E_r)=\rtsr\B(E_r)=r,
\]
\[
       \frac12\in\varepsilon_{\B(E_r)}(S_rT_r),
       \qquad
       \frac12\notin\varepsilon_{\B(E_r)}(T_rS_r).
\]
\end{corollary}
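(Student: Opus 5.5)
The plan is to combine the space \(E=X^2\) of Theorem~\ref{thm:main} with a second Motakis realisation whose stable rank forces the prescribed value. The paper advertises this as a ``disjoint-union argument''. Accordingly, take \(K_r=S^4\sqcup L_r\), where \(L_r\) is a compact metrisable space with \(\dim L_r=2(r-1)\) when \(r<\infty\) (for instance the cube \([0,1]^{2r-2}\)), and \(L_\infty=[0,1]^{\N}\). Since covering dimension of a finite disjoint union is the maximum, \(\dim K_r=\max\{4,\dim L_r\}\).

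The value of \(\dim K_r\) is chosen using Corollary~\ref{cor:matrix_amplified_stable_rank} with \(n=2\). We need \(\lceil\lfloor\dim K_r/2\rfloor/2\rceil+1=r\). For \(r\geqslant3\), take \(\dim L_r=4(r-1)\), so that \(\lfloor\dim K_r/2\rfloor=2(r-1)\) and the formula gives \(r\). For \(r=2\), take \(K_2=S^4\), which is exactly Theorem~A. For \(r=\infty\), the Hilbert cube gives \(\infty\). So \(L_r=[0,1]^{4(r-1)}\) for \(3\leqslant r<\infty\).

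Next, apply Theorem~\ref{thm:Motakis} to \(K_r\). The prescribed family \(\mathcal P\) consists of the entries of \(a\) and \(b\) and \(\theta\), each extended by zero on \(L_r\); these extensions are continuous because \(S^4\) is clopen in \(K_r\). Let \(X_r\) be the resulting realisation, put \(E_r=X_r^2\), and set \(S_r=\widehat{a'}\) and \(T_r=\widehat{b'}\), where \(a',b'\) are these extensions. Corollary~\ref{cor:matrix_amplified_stable_rank} then gives \(\ltsr\B(E_r)=\rtsr\B(E_r)=r\).

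It remains to repeat the proof of Theorem~\ref{thm:main} verbatim for the extended functions. Put \(c'=I_2-2a'b'\) and \(d'=I_2-2b'a'\). These equal \(c\) and \(d\) on \(S^4\) and equal \(I_2\) on \(L_r\). Define \(G'\) to be \(G\) on \(S^4\) and \(0\) on \(L_r\); then \(d'=e^{G'}\), and Lemma~\ref{lem:matrix_lift} gives \(I-2T_rS_r=e^{\widehat{G'}}\in\Invzero\). This is why \(\theta\) was included in \(\mathcal P\) after extension by zero.

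The main point is showing \(I-2S_rT_r\notin\Invzero(\B(E_r))\). By Lemma~\ref{lem:homomorphism_components}, it suffices that \(c'\notin\Invzero(C(K_r,M_2(\C)))\). Restriction to the clopen set \(S^4\) is a surjective continuous unital homomorphism \(C(K_r,M_2(\C))\to C(S^4,M_2(\C))\). It maps \(c'\) to \(c\), which is not in \(\Invzero\) by Proposition~\ref{prop:KR}, so Lemma~\ref{lem:homomorphism_components}(i) gives the claim. Finally, Lemma~\ref{lem:scalar_component} turns both membership statements into the stated assertions about \(\tfrac12\) in the exponential spectra. The only delicate bookkeeping is matching the dimension of \(L_r\) to the ceiling formula.
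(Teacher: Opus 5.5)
Your proposal is correct and is essentially the paper's proof. It uses the same spaces \(K_r=S^4\,\dot\cup\,[0,1]^{4(r-1)}\) and \(S^4\,\dot\cup\,[0,1]^{\N}\), repeats Theorem~\ref{thm:main} verbatim with the Klaja--Ransford data moved to \(K_r\), and applies Corollary~\ref{cor:matrix_amplified_stable_rank} with \(n=2\); you extend by zero where the paper pulls back along a map constant on the cube, and you restrict to the clopen \(S^4\) where the paper composes with \(\iota_r\). Drop the tentative choice \(\dim L_r=2(r-1)\) in your first paragraph, which gives the wrong rank for \(r\geqslant3\); your later choice \(4(r-1)\) is the right one, and for \(r=\infty\) quoting the \(\dim K=\infty\) case of that corollary is enough, in place of the paper's direct cube-restriction argument.
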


\begin{proof}
For \(r=2\), let \(K_2=S^4\).  For finite \(r\geqslant3\), let
\[
       K_r=S^4\,\dot\cup\,[0,1]^{4(r-1)},
\]
and let
\[
       K_\infty=S^4\,\dot\cup\,[0,1]^\N.
\]
These are compact metrisable spaces.  Define \(p_r:K_r\to S^4\) to be the
identity on the \(S^4\)-component and constant on the other component; let
\(\iota_r:S^4\to K_r\) denote the inclusion.  Pull back the functions from
Section~3:
\[
 a_r=a\circ p_r,\quad b_r=b\circ p_r,\quad
 c_r=c\circ p_r,\quad d_r=d\circ p_r,
 \quad \theta_r=\theta\circ p_r.
\]
Apply Theorem~\ref{thm:Motakis} to the entries of \(a_r,b_r\) and to
\(\theta_r\), and let \(X_r\) be the resulting Motakis realisation over
\(K_r\).  Repeating the proof of
Theorem~\ref{thm:main} gives operators \(S_r,T_r\) on
\[
       E_r=X_r^2
\]
with the asserted exponential-spectral separation.  The non-triviality of
the Calkin class is preserved: if \(c_r\) were null-homotopic, then
\(c=c_r\circ\iota_r\) would be null-homotopic, contrary to
Proposition~\ref{prop:KR}.

For finite \(r\),
\[
       \dim K_r=4(r-1),
\]
where this also holds for \(r=2\).  Hence
Corollary~\ref{cor:matrix_amplified_stable_rank}, with \(n=2\), gives
\[
 \ltsr\B(E_r)=\rtsr\B(E_r)
 =\left\lceil\frac{\lfloor4(r-1)/2\rfloor}{2}\right\rceil+1
 =r.
\]
For the infinite-rank case there is a direct cube restriction argument.
For \(N\geqslant1\), the Hilbert-cube component of \(K_\infty\) contains the
closed subspace
\[
 F_N=\{(t_j)\in[0,1]^\N:t_j=0\text{ for }j>4N\}
 \cong[0,1]^{4N}.
\]
Coordinatewise restriction, which is surjective by the Tietze extension
theorem, gives
\[
 M_2(C(K_\infty))\longrightarrow M_2(C(F_N)).
\]
Lemma~\ref{lem:quotient_stable_rank} and the finite-dimensional formula give
\[
 N+1=\tsr M_2(C(F_N))
 \leqslant\tsr M_2(C(K_\infty)).
\]
Since \(N\) is arbitrary, \(\tsr M_2(C(K_\infty))=\infty\).  Applying
Lemma~\ref{lem:quotient_stable_rank} once more to the Calkin quotient of
\(E_\infty=X_\infty^2\) gives
\[
       \ltsr\B(E_\infty)=\rtsr\B(E_\infty)=\infty.
\]
\end{proof}

\begin{proof}[Proof of Theorem A]
Use the realisation \(X\), the space \(E=X^2\), and the operators \(S,T\)
from Theorem~\ref{thm:main}.  That theorem gives the asserted
exponential-spectral separation.  Since \(\dim S^4=4\),
Corollary~\ref{cor:matrix_amplified_stable_rank}, applied with \(n=2\),
gives
\[
 \ltsr\B(E)=\rtsr\B(E)
 =\left\lceil\frac{\lfloor4/2\rfloor}{2}\right\rceil+1
 =2.
\]
\end{proof}

\section{The torsion question and first stabilisation}\label{sec:torsion}

The elementary-matrix calculation below relates a question of Klaja and
Ransford \cite[Question~4.1]{KlajaRansford} to the first stabilisation map.

For a complex unital Banach algebra \(A\), put
\[
        \IndG(A)=\Inv(A)/\Invzero(A).
\]
The identity component is a normal subgroup of the topological group
\(\Inv(A)\).  Since diagonal stabilisation sends \(\Invzero(A)\) into
\(\Invzero(M_2(A))\), it induces a group homomorphism
\[
       s_A:\IndG(A)\longrightarrow\IndG(M_2(A)),
       \qquad
       [u]\longmapsto
       \left[\begin{pmatrix}u&0\\0&1\end{pmatrix}\right].
\]

When \(1+rs\) is invertible, Weibel's Exercise~III.1.1 places
\((1+rs)(1+sr)^{-1}\) in \(E_2(R)\).  The calculation below is the
corresponding explicit Banach-algebra factorisation and gives directly the
identity-component conclusion needed here
\cite[Exercise~III.1.1]{WeibelKBook}.

\begin{lemma}\label{lem:Jacobson_stabilisation}
Let \(a,b\in A\), and suppose that
\[
       u=1-ab\in\Inv(A),
       \qquad
       v=1-ba\in\Invzero(A).
\]
Then
\[
       \begin{pmatrix}u&0\\0&1\end{pmatrix}
       \in\Invzero(M_2(A)).
\]
\end{lemma}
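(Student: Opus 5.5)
The plan is to prove the factorisation
\[
 \begin{pmatrix}1-ab&0\\0&1\end{pmatrix}
 = E\begin{pmatrix}1&0\\0&1-ba\end{pmatrix}E'
\]
with \(E,E'\) products of elementary matrices in \(M_2(A)\), that is, matrices of the form \(\begin{pmatrix}1&x\\0&1\end{pmatrix}\) or \(\begin{pmatrix}1&0\\x&1\end{pmatrix}\).

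Each elementary matrix lies in \(\Invzero(M_2(A))\): it is the exponential of the nilpotent matrix \(\begin{pmatrix}0&x\\0&0\end{pmatrix}\), or it is joined to \(I_2\) by the path \(t\mapsto\begin{pmatrix}1&tx\\0&1\end{pmatrix}\).

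Next, \(\operatorname{diag}(1,v)\in\Invzero(M_2(A))\) because \(v\in\Invzero(A)\). Indeed, the unital homomorphism-free map \(w\mapsto\operatorname{diag}(1,w)\) is continuous and multiplicative, and it sends exponentials \(e^{x}\) to \(e^{\operatorname{diag}(0,x)}\). Lemma~\ref{lem:identity_component_exponentials} then gives the claim. Since \(\Invzero(M_2(A))\) is a group, the factorisation yields \(\operatorname{diag}(u,1)\in\Invzero(M_2(A))\).

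The key step is to find the explicit factorisation. I would start from the standard row and column reduction of \(M=\begin{pmatrix}1&a\\b&1\end{pmatrix}\). Clearing the \(b\) entry with row operations gives
\[
\begin{pmatrix}1&0\\-b&1\end{pmatrix}M=\begin{pmatrix}1&a\\0&1-ba\end{pmatrix},
\]
and a further column operation reduces this to \(\operatorname{diag}(1,1-ba)\). So
\[
\begin{pmatrix}1&0\\-b&1\end{pmatrix}M\begin{pmatrix}1&-a\\0&1\end{pmatrix}=\operatorname{diag}(1,1-ba).
\]
Clearing instead from the other corner,
\[
\begin{pmatrix}1&-a\\0&1\end{pmatrix}M\begin{pmatrix}1&0\\-b&1\end{pmatrix}=\operatorname{diag}(1-ab,1).
\]
These reductions use only that \(1\) is invertible, not that \(u\) or \(v\) is invertible.

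Eliminating \(M\) between the two identities expresses \(\operatorname{diag}(u,1)\) as a product of four elementary matrices and \(\operatorname{diag}(1,v)\):
\[
\operatorname{diag}(u,1)
=\begin{pmatrix}1&-a\\0&1\end{pmatrix}\begin{pmatrix}1&0\\b&1\end{pmatrix}
\operatorname{diag}(1,v)
\begin{pmatrix}1&a\\0&1\end{pmatrix}\begin{pmatrix}1&0\\-b&1\end{pmatrix}.
\]

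The main obstacle is verifying these two matrix identities entrywise, with attention to the order of factors in the noncommutative algebra \(A\). I expect the entries to simplify to \(1-ab\) and \(1-ba\) exactly, with the off-diagonal terms cancelling. If a sign or order slip appears, I would fix it by recomputing the reductions directly. The hypothesis \(u\in\Inv(A)\) is then needed only to say that \(\operatorname{diag}(u,1)\) is invertible, which the factorisation already shows.
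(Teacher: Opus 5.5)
Your proof is correct and is essentially the paper's argument: both pass through \(\bigl(\begin{smallmatrix}1&a\\ b&1\end{smallmatrix}\bigr)\), reduce it in two ways by unipotent (hence exponential) matrices, and use \(\operatorname{diag}(1,v)\in\Invzero(M_2(A))\). Your two reductions and the resulting five-factor identity hold entrywise exactly as written; the only difference is that your elementary factors have off-diagonal entries \(\pm a,\pm b\) where the paper's use \(av^{-1}\) and \(-bu^{-1}\), so you never invert \(u\) or \(v\) and, as you note, the hypothesis \(u\in\Inv(A)\) becomes redundant.
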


\begin{proof}
The following two factorisations hold in \(M_2(A)\):
\begin{align}
 \begin{pmatrix}1&0\\b&1\end{pmatrix}
 \begin{pmatrix}1&a\\0&v\end{pmatrix}
 &=\begin{pmatrix}1&a\\b&1\end{pmatrix},
 \label{eq:matrix_Jacobson_first}\\
 \begin{pmatrix}1&a\\0&1\end{pmatrix}
 \begin{pmatrix}u&0\\b&1\end{pmatrix}
 &=\begin{pmatrix}1&a\\b&1\end{pmatrix}.
 \label{eq:matrix_Jacobson_second}
\end{align}
Both unipotent matrices in these displays are exponentials.  Moreover,
\[
 \begin{pmatrix}1&a\\0&v\end{pmatrix}
 =\begin{pmatrix}1&av^{-1}\\0&1\end{pmatrix}
  \begin{pmatrix}1&0\\0&v\end{pmatrix}
 \in\Invzero(M_2(A)).
\]
Equation~\eqref{eq:matrix_Jacobson_first} therefore shows that
\(\bigl(\begin{smallmatrix}1&a\\b&1\end{smallmatrix}\bigr)\) belongs to the
identity component.  Equation~\eqref{eq:matrix_Jacobson_second} then gives
\[
       \begin{pmatrix}u&0\\b&1\end{pmatrix}
       \in\Invzero(M_2(A)).
\]
Finally,
\[
 \begin{pmatrix}1&0\\-bu^{-1}&1\end{pmatrix}
 =\exp\begin{pmatrix}0&0\\-bu^{-1}&0\end{pmatrix},
\]
and left multiplication by this exponential gives
\[
 \begin{pmatrix}1&0\\-bu^{-1}&1\end{pmatrix}
 \begin{pmatrix}u&0\\b&1\end{pmatrix}
 =\begin{pmatrix}u&0\\0&1\end{pmatrix}.
\]
\end{proof}

Define the \emph{Jacobson part} of the index group by
\[
 \Jac(A)=\{[1-ab]\in\IndG(A):
 a,b\in A,\ 1-ab\in\Inv(A),\ 1-ba\in\Invzero(A)\}.
\]
This is a distinguished subset, not necessarily a subgroup.

In this notation, the torsion problem of Klaja and Ransford is the following.

\begin{question}\label{q:torsion}
If \(\Jac(A)\) contains a non-identity element, must \(\IndG(A)\) contain a
non-trivial element of finite order?
\end{question}

\begin{proposition}\label{prop:Jacobson_kernel}
For every complex unital Banach algebra \(A\),
\[
       \Jac(A)\subseteq\ker s_A.
\]
Moreover, the following assertions are equivalent.
\begin{enumerate}[label=\textup{(\roman*)}]
\item There are \(a,b\in A\) such that
\[
       \varepsilon_A(ab)\setminus\{0\}
       \neq
       \varepsilon_A(ba)\setminus\{0\}.
\]
\item The set \(\Jac(A)\) contains a non-identity element of \(\IndG(A)\).
\end{enumerate}
\end{proposition}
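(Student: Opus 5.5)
The inclusion \(\Jac(A)\subseteq\ker s_A\) is immediate from Lemma~\ref{lem:Jacobson_stabilisation}. If \([1-ab]\in\Jac(A)\), then \(u=1-ab\in\Inv(A)\) and \(v=1-ba\in\Invzero(A)\). The lemma places \(\operatorname{diag}(u,1)\) in \(\Invzero(M_2(A))\), so \(s_A([u])\) is the identity class. Since \(s_A\) is well defined on classes, nothing further is needed.

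For (i)\(\Rightarrow\)(ii), I fix \(a,b\) and a non-zero \(\lambda\) lying in exactly one of \(\varepsilon_A(ab)\) and \(\varepsilon_A(ba)\). Interchanging the roles of \(a\) and \(b\) if necessary, I assume \(\lambda\in\varepsilon_A(ab)\) and \(\lambda\notin\varepsilon_A(ba)\). Then \(\lambda1-ba\in\Invzero(A)\). By Lemma~\ref{lem:scalar_component}, \(1-ba/\lambda\in\Invzero(A)\), hence it is invertible. Jacobson's inverse formula (Lemma~\ref{lem:Jacobson_inverse}) then gives \(1-ab/\lambda\in\Inv(A)\). Since \(\lambda\in\varepsilon_A(ab)\), we have \(\lambda1-ab\notin\Invzero(A)\), and Lemma~\ref{lem:scalar_component} again gives \(1-ab/\lambda\notin\Invzero(A)\). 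Setting \(a'=a/\lambda\) and \(b'=b\), the class \([1-a'b']\) lies in \(\Jac(A)\) and is not the identity. The only point needing care is the symmetric case, which the interchange handles because the definition of \(\Jac\) only asks for the twisted pair in one order.

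For (ii)\(\Rightarrow\)(i), I take \(a,b\) with \(u=1-ab\in\Inv(A)\setminus\Invzero(A)\) and \(1-ba\in\Invzero(A)\). Then \(1\notin\varepsilon_A(ba)\) and \(1\in\varepsilon_A(ab)\), so the two sets differ at \(1\neq0\).

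I do not expect a real obstacle, since all the work sits in Lemma~\ref{lem:Jacobson_stabilisation}. The only things to track are the scalar rescaling by \(\lambda\), which relies on \(\lambda\neq0\), and the reduction to a single orientation of the asymmetry.
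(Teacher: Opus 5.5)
Your proposal is correct and follows the paper's proof essentially step for step. The inclusion comes from Lemma~\ref{lem:Jacobson_stabilisation}, (i)\(\Rightarrow\)(ii) uses the rescaling \(a'=\lambda^{-1}a\) together with Lemma~\ref{lem:scalar_component}, Jacobson's formula and interchanging \(a,b\) for the opposite orientation, and the converse takes the witness \(\lambda=1\).
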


\begin{proof}
The inclusion in the stabilisation kernel is
Lemma~\ref{lem:Jacobson_stabilisation}.

Suppose that \(\lambda\neq0\) witnesses non-commutativity, say
\[
       \lambda\in\varepsilon_A(ab)\setminus\varepsilon_A(ba).
\]
Then \(\lambda1-ba\in\Invzero(A)\), and hence it is invertible.  By
Jacobson's inverse formula, \(\lambda1-ab\) is invertible, but it does not
belong to \(\Invzero(A)\).  Set \(a'=\lambda^{-1}a\).  Since
\(\lambda^{-1}1_A\in\Invzero(A)\),
\[
       1-a'b=\lambda^{-1}(\lambda1-ab)
          \in\Inv(A)\setminus\Invzero(A)
\]
and
\[
       1-ba'=\lambda^{-1}(\lambda1-ba)\in\Invzero(A).
\]
Thus \([1-a'b]\) is a non-identity element of \(\Jac(A)\).  If the witness
lies in the opposite set difference, interchange \(a\) and \(b\).

Conversely, if \([1-ab]\in\Jac(A)\) is non-trivial, then
\(1-ab\notin\Invzero(A)\) and \(1-ba\in\Invzero(A)\).  Hence
\[
       1\in\varepsilon_A(ab)\setminus\varepsilon_A(ba).
\]
\end{proof}

\begin{corollary}\label{cor:injective_stabilisation_commutativity}
If \(s_A:\IndG(A)\to\IndG(M_2(A))\) is injective, then the exponential
spectrum is commutative away from zero in \(A\).
\end{corollary}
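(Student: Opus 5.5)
The corollary should follow directly from Proposition~\ref{prop:Jacobson_kernel}, argued by contraposition. Assume \(s_A\) is injective, so \(\ker s_A\) is the trivial subgroup \(\{[1_A]\}\) of \(\IndG(A)\). The first assertion of Proposition~\ref{prop:Jacobson_kernel} gives \(\Jac(A)\subseteq\ker s_A\). Hence every element of \(\Jac(A)\) is the identity class.

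Now suppose, towards a contradiction, that there are \(a,b\in A\) with
\[
 \varepsilon_A(ab)\setminus\{0\}\neq\varepsilon_A(ba)\setminus\{0\}.
\]
This is condition (i) of Proposition~\ref{prop:Jacobson_kernel}. By the equivalence (i)\(\Leftrightarrow\)(ii) proved there, \(\Jac(A)\) would contain a non-identity element of \(\IndG(A)\). That contradicts the previous paragraph. Therefore
\[
 \varepsilon_A(ab)\setminus\{0\}=\varepsilon_A(ba)\setminus\{0\}
 \qquad(a,b\in A),
\]
which is the asserted commutativity away from zero.

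No step here is genuinely hard. The substance lies in Lemma~\ref{lem:Jacobson_stabilisation}, which supplies the kernel inclusion, and in the rescaling argument \(a'=\lambda^{-1}a\), which reduces an arbitrary witness \(\lambda\neq0\) to the witness \(\lambda=1\). Both are already contained in Proposition~\ref{prop:Jacobson_kernel}. The only point to check is that the identity element of \(\IndG(A)\) is the class \([1_A]=\Invzero(A)\), so that injectivity of the homomorphism \(s_A\) means exactly that its kernel is trivial.
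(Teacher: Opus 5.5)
Your proposal is correct and is the paper's argument: by Proposition~\ref{prop:Jacobson_kernel}, a failure of commutativity would give a non-identity element of \(\Jac(A)\subseteq\ker s_A\), which contradicts injectivity. Your contrapositive wording adds nothing beyond this one-line deduction.
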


\begin{proof}
A counterexample would give a non-trivial element of \(\ker s_A\) by
Proposition~\ref{prop:Jacobson_kernel}.
\end{proof}

The following approximation argument, due in essence to Murphy
\cite{Murphy}, gives the stable-rank-one consequence without any appeal to a
stabilisation theorem.

\begin{lemma}[Murphy's approximation lemma]\label{lem:Murphy_approximation}
Let \(A\) be a complex unital Banach algebra and let \(a,b\in A\).  If either
\(a\) or \(b\) belongs to the norm closure of \(\Inv(A)\), then
\[
       \varepsilon_A(ab)\setminus\{0\}
       =
       \varepsilon_A(ba)\setminus\{0\}.
\]
\end{lemma}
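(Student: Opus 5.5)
Suppose first that \(a\) lies in the closure of \(\Inv(A)\); the case of \(b\) follows by interchanging the roles of \(a\) and \(b\), since the statement is symmetric. Fix \(\lambda\neq0\). By Jacobson's inverse formula (Lemma~\ref{lem:Jacobson_inverse}) applied to \(\lambda^{-1}a\) and \(b\), the element \(\lambda1-ab\) is invertible if and only if \(\lambda1-ba\) is. So it suffices to show the following: when both are invertible, \(\lambda1-ab\in\Invzero(A)\) if and only if \(\lambda1-ba\in\Invzero(A)\). By Lemma~\ref{lem:scalar_component} we may also replace \(a\) by \(\lambda^{-1}a\), which still lies in the closure of \(\Inv(A)\), and so reduce to \(\lambda=1\).

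The key identity concerns an invertible \(x\):
\[
 1-xb=x(1-bx)x^{-1}.
\]
Thus \(1-xb\) and \(1-bx\) are conjugate in \(\Inv(A)\) whenever either is invertible. Since \(\Invzero(A)\) is a normal subgroup of \(\Inv(A)\), conjugation preserves membership in \(\Invzero(A)\). So for invertible \(x\), one of \(1-xb\), \(1-bx\) lies in \(\Invzero(A)\) exactly when the other does.

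Next comes the approximation. Assume \(u=1-ab\) and \(v=1-ba\) are both invertible. Choose invertible \(x_k\to a\). Because \(\Inv(A)\) is open, \(1-x_kb\) and \(1-bx_k\) are invertible for large \(k\), and they converge to \(u\) and \(v\) respectively. The connected components of the open set \(\Inv(A)\) are open, being path components of a locally path-connected space. Hence for large \(k\), \(1-x_kb\) lies in the same component as \(u\), and \(1-bx_k\) lies in the same component as \(v\). Concretely, the straight segment from \(u\) to \(1-x_kb\) stays inside a small ball of invertibles. By the conjugation step, \(1-x_kb\in\Invzero(A)\) if and only if \(1-bx_k\in\Invzero(A)\). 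Therefore \(u\in\Invzero(A)\) if and only if \(v\in\Invzero(A)\), which gives \(1\in\varepsilon_A(ab)\) if and only if \(1\in\varepsilon_A(ba)\).

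The main point needing care is this component-stability step. A convergent sequence of invertibles with an invertible limit eventually lies in the limit's component, by openness of components of \(\Inv(A)\). Once that is in place, the remaining steps are routine bookkeeping with Lemma~\ref{lem:scalar_component} and the normality of \(\Invzero(A)\).
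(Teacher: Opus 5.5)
Your proof is correct and follows essentially the same route as the paper. You approximate \(a\) by invertibles \(x_k\), observe that \(1-x_kb=x_k(1-bx_k)x_k^{-1}\), so the two lie in \(\Invzero(A)\) simultaneously, and pass to the limit using openness of the components of \(\Inv(A)\). The only differences are cosmetic: you first reduce to \(\lambda=1\) via Lemma~\ref{lem:scalar_component}, and you handle the case \(b\in\overline{\Inv(A)}\) by symmetry, whereas the paper keeps \(\lambda\) throughout and writes out the analogous conjugation identity with \(b_n\).
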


\begin{proof}
Assume first that \(a\in\overline{\Inv(A)}\), and choose
\(a_n\in\Inv(A)\) with \(a_n\to a\).  Fix \(\lambda\neq0\), and put
\[
       u=\lambda1-ab,
       \qquad
       v=\lambda1-ba.
\]
By Jacobson's inverse formula, \(u\) and \(v\) are either both invertible or
both non-invertible.  In the latter case \(\lambda\) belongs to both
exponential spectra.  Suppose they are invertible.  For all sufficiently
large \(n\),
\[
       u_n=\lambda1-a_nb,
       \qquad
       v_n=\lambda1-ba_n
\]
are invertible, and the line segments from \(u\) to \(u_n\), and from \(v\)
to \(v_n\), remain in the invertible group.  Thus \(u_n\) and \(u\) lie in
the same component, and likewise \(v_n\) and \(v\).  Since \(a_n\) is
invertible,
\[
       v_n=a_n^{-1}u_na_n.
\]
Conjugation preserves the identity component, so
\[
       u\in\Invzero(A)\quad\Longleftrightarrow\quad
       v\in\Invzero(A).
\]
This proves the equality at \(\lambda\).  If
\(b\in\overline{\Inv(A)}\), use invertible approximants \(b_n\) and the
identity
\[
       \lambda1-ab_n=b_n^{-1}(\lambda1-b_na)b_n.
\]
\end{proof}

\begin{corollary}\label{cor:stable_rank_one_commutativity}
Let \(A\) be a complex unital Banach algebra.  If
\(\ltsr(A)=1\), or equivalently \(\rtsr(A)=1\), then
\[
       \varepsilon_A(ab)\setminus\{0\}
       =
       \varepsilon_A(ba)\setminus\{0\}
       \qquad(a,b\in A).
\]
\end{corollary}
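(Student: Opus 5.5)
The plan is to deduce this directly from Lemma~\ref{lem:rank_one_equivalence} and Murphy's approximation lemma (Lemma~\ref{lem:Murphy_approximation}), so no stabilisation or homotopy input is needed.

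First, I use Lemma~\ref{lem:rank_one_equivalence} to replace the hypothesis by density of the invertibles. By that lemma, \(\ltsr(A)=1\) and \(\rtsr(A)=1\) are equivalent, which justifies the word ``equivalently'' in the statement. Each is also equivalent to density of \(\Inv(A)\) in \(A\). So under either hypothesis
\[
\overline{\Inv(A)}=A.
\]

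Next, fix arbitrary \(a,b\in A\). By the previous step, \(a\) lies in the norm closure of \(\Inv(A)\); in fact both \(a\) and \(b\) do, although only one is needed. Lemma~\ref{lem:Murphy_approximation} therefore applies directly and gives
\[
\varepsilon_A(ab)\setminus\{0\}=\varepsilon_A(ba)\setminus\{0\}.
\]
Since \(a,b\) were arbitrary, this proves the corollary.

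There is no genuine obstacle. The only point needing care is that the full strength of Rieffel's Proposition~3.1 is used in the one-sided Banach-algebra form recorded in Lemma~\ref{lem:rank_one_equivalence}. That is, left (or right) unimodularity of singletons being dense forces density of two-sided invertibles. This is taken as given from the earlier lemma. The substantive component argument, that conjugation by the approximants \(a_n\) preserves \(\Invzero(A)\) and that short line segments stay inside one component, has already been carried out in the proof of Lemma~\ref{lem:Murphy_approximation}.
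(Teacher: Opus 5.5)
Your proposal is correct and matches the paper's proof: it uses Lemma~\ref{lem:rank_one_equivalence} to obtain density of \(\Inv(A)\) and then applies Lemma~\ref{lem:Murphy_approximation}. Your remark that only one of \(a,b\) needs to lie in \(\overline{\Inv(A)}\) agrees with how that lemma is stated.
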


\begin{proof}
By Lemma~\ref{lem:rank_one_equivalence}, \(\Inv(A)\) is dense in \(A\).
Apply Lemma~\ref{lem:Murphy_approximation}.
\end{proof}

For the original Klaja--Ransford algebra the torsion is visible:
\[
 \IndG(C(S^4,M_2(\C)))
 \cong[S^4,\GL_2(\C)]
 \cong\pi_4(\GL_2(\C))
 \cong\Z/2\Z.
\]
The first isomorphism follows from
Lemma~\ref{lem:function_components} and respects multiplication because
both sides use pointwise products.  For the second, \(\GL_2(\C)\) is
connected and, as a topological group, has trivial \(\pi_1\)-action on
\(\pi_4\); hence based and free homotopy classes coincide, even though
\(\pi_1(\GL_2(\C))\cong\Z\).  Concretely, multiplying a free homotopy
\(H_t\) pointwise by \(H_t(x_0)^{-1}\) makes it based.
Thus their class has order two.  In Theorem~\ref{thm:main}, the Calkin
quotient sends the class of \(I_E-2ST\) onto this obstruction.  This does not
determine the order of the class upstairs, since torsion in a quotient need
not lift to torsion.

\section{The block Fredholm index on essentially incomparable sums}

If \(X\) and \(Y\) are essentially incomparable, the Fredholm index of the
\((1,1)\)-entry defines a component invariant on
\(\Inv(\B(X\oplus Y))\).  We show that it has the same value on every
Jacobson pair \(I-AB,I-BA\).

Recall that \(R\in\B(X,Y)\) is \emph{inessential} if \(I_X-VR\) is Fredholm
for every \(V\in\B(Y,X)\).  We write \(\E(X,Y)\) for the inessential
operators.  They form a closed operator ideal, and an inessential perturbation
of a Fredholm operator is Fredholm with the same index
\cite{Kleinecke,AienaGonzalez}.  Banach spaces \(X\) and \(Y\) are
\emph{essentially incomparable} if
\[
       \B(X,Y)=\E(X,Y),
       \qquad
       \B(Y,X)=\E(Y,X).
\]

\begin{lemma}\label{lem:off_diagonal_inessential}
Let \(X\) and \(Y\) be essentially incomparable and put \(Z=X\oplus Y\).
For \(R_{12}\in\B(Y,X)\) and \(R_{21}\in\B(X,Y)\), the operator
\[
       R=\begin{pmatrix}0&R_{12}\\R_{21}&0\end{pmatrix}\in\B(Z)
\]
is inessential.
\end{lemma}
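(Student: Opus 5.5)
The plan is to split \(R\) into its two off-diagonal pieces and invoke the ideal property of the inessential operators on \(Z\). Write
\[
 R=\iota_XR_{12}\pi_Y+\iota_YR_{21}\pi_X,
\]
where \(\iota_X,\iota_Y\) are the canonical inclusions into \(Z\) and \(\pi_X,\pi_Y\) the canonical projections. By hypothesis \(R_{12}\in\B(Y,X)=\E(Y,X)\) and \(R_{21}\in\B(X,Y)=\E(X,Y)\). Since \(\E\) is a closed operator ideal (in the sense of Pietsch, as recalled above from \cite{Kleinecke,AienaGonzalez}), it is stable under composition with bounded operators on either side. Hence \(\iota_XR_{12}\pi_Y\in\E(Z)\) and \(\iota_YR_{21}\pi_X\in\E(Z)\). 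Because the components \(\E(Z,Z)\) are linear subspaces, the sum \(R\) lies in \(\E(Z)\).

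If one prefers to avoid quoting the full operator-ideal property, the key step can be checked directly from the definition. Take an arbitrary \(V\in\B(Z)\) with entries \(V_{ij}\). We need \(I_Z-VR\) to be Fredholm, and we can compute
\[
 VR=\begin{pmatrix}V_{12}R_{21}&V_{11}R_{12}\\ V_{22}R_{21}&V_{21}R_{12}\end{pmatrix}.
\]
The natural way to organise this is to show separately that \(I_Z-V\iota_YR_{21}\pi_X\) and its \(R_{12}\) analogue behave well. The cleanest route is the standard characterisation: \(S\in\E(Z)\) if and only if \(I-US\) is Fredholm for every \(U\), which is equivalent to \(I-SU\) being Fredholm for every \(U\), by Jacobson's lemma applied in the Calkin algebra. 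With this, stability under left and right multiplication is immediate, and so is closure under sums, since \(\E(Z)\) is the preimage of the Jacobson radical of \(\Calk(Z)\).

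Indeed, that is how I would argue closure under addition: an operator \(S\in\B(Z)\) is inessential exactly when \(\q_Z(S)\) lies in the radical of \(\Calk(Z)\), and the radical is an ideal. For the pieces, \(\q_Z(\iota_YR_{21}\pi_X)\) is quasi-invertible against every multiplier. To see this, note that
\[
 I_Z-U\iota_YR_{21}\pi_X
\]
is Fredholm if and only if \(I_X-\pi_XU\iota_YR_{21}\) is Fredholm, by the Jacobson correspondence \(1-ab\leftrightarrow 1-ba\) with \(a=U\iota_Y R_{21}\) and \(b=\pi_X\), transported to Fredholmness. The latter is Fredholm because \(R_{21}\in\E(X,Y)\) and \(\pi_XU\iota_Y\in\B(Y,X)\).

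The only point needing care is this Fredholm version of Jacobson's lemma between different spaces, namely that \(I_Z-ab\) is Fredholm if and only if \(I_X-ba\) is. I expect it to be the main (though standard) obstacle. It follows from Lemma~\ref{lem:Jacobson_inverse} applied to generalized inverses modulo compacts (Atkinson's theorem), which I would cite rather than reprove.
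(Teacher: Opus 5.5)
Your first paragraph is exactly the paper's proof: you write \(R=\iota_XR_{12}\pi_Y+\iota_YR_{21}\pi_X\), make each summand inessential by the operator-ideal property of \(\E\), and then use closure under sums. Your supplementary direct check is also sound, and the Fredholm form of Jacobson's lemma it relies on is the paper's Lemma~\ref{lem:Fredholm_Jacobson}, but the argument does not need it.
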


\begin{proof}
Let \(i_X,i_Y\) be the coordinate embeddings and \(p_X,p_Y\) the coordinate
projections.  Then
\[
       R=i_XR_{12}p_Y+i_YR_{21}p_X.
\]
Each summand is inessential by the operator-ideal property, and so is their
sum.
\end{proof}

\begin{lemma}\label{lem:block_diagonal_Fredholm}
Let \(U_X\in\B(X)\) and \(U_Y\in\B(Y)\).  The block diagonal operator
\(U_X\oplus U_Y\) on \(X\oplus Y\) is Fredholm if and only if both
\(U_X\) and \(U_Y\) are Fredholm.  In that case
\[
       \ind(U_X\oplus U_Y)=\ind U_X+\ind U_Y.
\]
\end{lemma}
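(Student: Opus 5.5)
The plan is to work directly with kernels and ranges, identifying \(X\oplus Y\) with pairs \((x,y)\) and writing \(U=U_X\oplus U_Y\). The basic identities to record first are
\[
       \Ker U=\Ker U_X\oplus\Ker U_Y,
       \qquad
       \Ran U=\Ran U_X\oplus\Ran U_Y .
\]
Both follow coordinatewise, since \(U(x,y)=(U_Xx,U_Yy)\). Immediately,
\[
       \alpha(U)=\alpha(U_X)+\alpha(U_Y),
\]
with the convention that the sum is infinite if either term is.

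For cokernels, I will consider the natural linear map
\[
       X\oplus Y\longrightarrow X/\Ran U_X\oplus Y/\Ran U_Y .
\]
It is surjective with kernel \(\Ran U_X\oplus\Ran U_Y=\Ran U\). Hence
\[
       (X\oplus Y)/\Ran U\cong X/\Ran U_X\oplus Y/\Ran U_Y
\]
as vector spaces, and so \(\beta(U)=\beta(U_X)+\beta(U_Y)\).

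Next I need closedness of ranges. If \(\Ran U_X\) and \(\Ran U_Y\) are closed, their product is closed in the product topology. Conversely, if \(\Ran U\) is closed, then
\[
       \Ran U_X=\{x:(x,0)\in\Ran U\}
\]
is the preimage of a closed set under the continuous embedding \(x\mapsto(x,0)\), so it is closed. The same argument applies to \(\Ran U_Y\).

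Combining these facts: \(U\) has closed range with finite \(\alpha\) and \(\beta\) exactly when both \(U_X\) and \(U_Y\) do. Subtracting the two additivity formulas then gives
\[
       \ind(U_X\oplus U_Y)=\ind U_X+\ind U_Y .
\]

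There is no real obstacle here. The only point needing care is the converse direction for closed ranges, which the slice argument above handles. An alternative is to note that a finite-codimensional range of a bounded operator is automatically closed, but the slice argument is direct and avoids that.
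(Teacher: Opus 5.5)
Your proposal is correct and follows essentially the same route as the paper. Both arguments use the coordinatewise kernel and range identities, obtain closedness of \(\Ran U_X\) and \(\Ran U_Y\) by slicing the closed range of \(U_X\oplus U_Y\) along the coordinate summands, and then use additivity of \(\alpha\) and \(\beta\); your explicit cokernel isomorphism simply spells out the step the paper summarises as ``the quotient dimensions split as sums''.
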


\begin{proof}
One has
\[
 \Ker(U_X\oplus U_Y)=\Ker U_X\oplus\Ker U_Y,
 \qquad
 \Ran(U_X\oplus U_Y)=\Ran U_X\oplus\Ran U_Y.
\]
If the block operator is Fredholm, intersecting its closed range with the two
coordinate summands shows that \(\Ran U_X\) and \(\Ran U_Y\) are closed.  The
kernel and quotient dimensions then split as sums.  The converse and the
index formula follow from the same identities.
\end{proof}

\begin{proposition}\label{prop:phi}
Let \(X\) and \(Y\) be essentially incomparable and put \(Z=X\oplus Y\).
If
\[
        U=\begin{pmatrix}U_{11}&U_{12}\\U_{21}&U_{22}\end{pmatrix}
        \in\B(Z)
\]
is Fredholm, then \(U_{11}\) and \(U_{22}\) are Fredholm and
\[
        \ind U=\ind U_{11}+\ind U_{22}.
\]
In particular,
\[
        \varphi:\Inv(\B(Z))\longrightarrow\Z,
        \qquad
        \varphi(U)=\ind U_{11},
\]
is a continuous group homomorphism, where \(\Z\) is written additively.
\end{proposition}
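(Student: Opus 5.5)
Write \(U=D+R\), where \(D=U_{11}\oplus U_{22}\) is the block diagonal part and
\[
 R=\begin{pmatrix}0&U_{12}\\U_{21}&0\end{pmatrix}.
\]
By Lemma~\ref{lem:off_diagonal_inessential}, \(R\) is inessential on \(Z\).  Since inessential perturbations of Fredholm operators are Fredholm with the same index, and \(\E(Z)\) is a linear subspace, \(D=U-R\) is Fredholm with \(\ind D=\ind U\).  Lemma~\ref{lem:block_diagonal_Fredholm} then shows that \(U_{11}\) and \(U_{22}\) are Fredholm and that
\[
 \ind U=\ind D=\ind U_{11}+\ind U_{22}.
\]
The only point to check is that \(\E(Z)\) is closed under negation, so that \(-R\) is inessential; this is immediate from the ideal property.

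For \(\varphi\), let \(U\in\Inv(\B(Z))\).  Then \(U\) is Fredholm of index zero, so \(U_{11}\) is Fredholm and \(\varphi\) is well defined.

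\emph{Continuity.} The map \(U\mapsto U_{11}=p_XUi_X\) is norm continuous, and the index is locally constant on the open set of Fredholm operators in \(\B(X)\).  Hence \(\varphi\) is locally constant, and in particular continuous into the discrete group \(\Z\).

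\emph{Homomorphism property.} This is the main step.  For \(U,V\in\Inv(\B(Z))\), the \((1,1)\)-entry of \(UV\) is
\[
 (UV)_{11}=U_{11}V_{11}+U_{12}V_{21}.
\]
The operator \(U_{12}V_{21}\) factors through \(Y\), and \(U_{12}\in\B(Y,X)=\E(Y,X)\); by the operator-ideal property, \(U_{12}V_{21}\in\E(X)\).  Both \(U_{11}\) and \(V_{11}\) are Fredholm by the first part, so \(U_{11}V_{11}\) is Fredholm with index \(\ind U_{11}+\ind V_{11}\) by the index product formula.  Adding the inessential perturbation \(U_{12}V_{21}\) does not change the index, so
\[
 \varphi(UV)=\varphi(U)+\varphi(V).
\]

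Finally, \(\varphi(I_Z)=\ind I_X=0\), which together with multiplicativity makes \(\varphi\) a group homomorphism.  The only facts used beyond earlier lemmas are the standard ones: the logarithmic index law for products of Fredholm operators, and invariance of the index under inessential perturbations.
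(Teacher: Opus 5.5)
Your proposal is correct and follows the paper's proof essentially step for step. Both proofs remove the inessential off-diagonal part via Lemma~\ref{lem:off_diagonal_inessential} and apply Lemma~\ref{lem:block_diagonal_Fredholm}, then obtain continuity from local constancy of the index. The homomorphism property in both comes from \((UV)_{11}=U_{11}V_{11}+U_{12}V_{21}\), whose second summand is inessential. Your extra remarks (negation, \(\varphi(I_Z)=0\), well-definedness) are harmless details that the paper leaves implicit.
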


\begin{proof}
By Lemma~\ref{lem:off_diagonal_inessential}, the off-diagonal part of \(U\)
is inessential.  Hence
\[
       D=\begin{pmatrix}U_{11}&0\\0&U_{22}\end{pmatrix}
\]
is Fredholm and \(\ind D=\ind U\).  Lemma~\ref{lem:block_diagonal_Fredholm}
gives the first assertion and the index formula.

The map \(U\mapsto U_{11}\) is continuous, and the Fredholm index is locally
constant, so \(\varphi\) is continuous.  If \(U,V\in\Inv(\B(Z))\), then
\[
       (UV)_{11}=U_{11}V_{11}+U_{12}V_{21}.
\]
The second summand is inessential on \(X\), while \(U_{11}V_{11}\) is
Fredholm.  Therefore
\[
 \varphi(UV)
 =\ind(U_{11}V_{11})
 =\ind U_{11}+\ind V_{11}
 =\varphi(U)+\varphi(V).
\]
\end{proof}

\begin{lemma}[Fredholm Jacobson identity]\label{lem:Fredholm_Jacobson}
Let \(A\in\B(Y,X)\) and \(B\in\B(X,Y)\).  Then \(I_X-AB\) is Fredholm if
and only if \(I_Y-BA\) is Fredholm, and in that case
\[
       \ind(I_X-AB)=\ind(I_Y-BA).
\]
\end{lemma}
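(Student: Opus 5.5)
The plan is to reduce the statement to the block-matrix factorisations used in Lemma~\ref{lem:Jacobson_stabilisation}, now read in \(\B(X\oplus Y)\) and with invertibility replaced by the Fredholm property. Put \(Z=X\oplus Y\) and consider
\[
 M=\begin{pmatrix}I_X&A\\B&I_Y\end{pmatrix}\in\B(Z).
\]
There are two factorisations:
\[
 M=\begin{pmatrix}I_X&0\\B&I_Y\end{pmatrix}
   \begin{pmatrix}I_X&A\\0&I_Y-BA\end{pmatrix},
 \qquad
 M=\begin{pmatrix}I_X&A\\0&I_Y\end{pmatrix}
   \begin{pmatrix}I_X-AB&0\\B&I_Y\end{pmatrix}.
\]
Each unipotent factor is invertible, with inverse obtained by negating the off-diagonal entry.

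First I would remove the off-diagonal entries from the triangular factors. Composing with further invertible unipotent matrices gives
\[
 \begin{pmatrix}I_X&A\\0&I_Y-BA\end{pmatrix}
 \begin{pmatrix}I_X&-A\\0&I_Y\end{pmatrix}
 =\begin{pmatrix}I_X&0\\0&I_Y-BA\end{pmatrix}.
\]
In the same way,
\[
 \begin{pmatrix}I_X&0\\-B&I_Y\end{pmatrix}
 \begin{pmatrix}I_X-AB&0\\B&I_Y\end{pmatrix}
 =\begin{pmatrix}I_X-AB&0\\0&I_Y\end{pmatrix}.
\]
This second identity needs checking: the \((2,1)\) entry is \(-B(I_X-AB)+B=BAB\), which is not zero. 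So in the second case I would instead multiply on the right by \(\begin{pmatrix}I_X&0\\-B&I_Y\end{pmatrix}\). The lower-left entry then becomes \(B-B=0\) and the upper-left entry stays \(I_X-AB\), as required.

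Combining the steps gives invertible operators \(P_1,Q_1,P_2,Q_2\in\Inv(\B(Z))\) with
\[
 P_1\,(I_X\oplus(I_Y-BA))\,Q_1=M=P_2\,((I_X-AB)\oplus I_Y)\,Q_2.
\]
Hence \(I_X\oplus(I_Y-BA)\) and \((I_X-AB)\oplus I_Y\) are equivalent by invertible operators. Multiplication by invertibles preserves the Fredholm property and the index, because the kernel and the cokernel are carried isomorphically onto one another.

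Lemma~\ref{lem:block_diagonal_Fredholm} then says that \(I_X\oplus(I_Y-BA)\) is Fredholm exactly when \(I_Y-BA\) is, with index \(\ind(I_Y-BA)\), since \(I_X\) has index zero. Likewise \((I_X-AB)\oplus I_Y\) is Fredholm exactly when \(I_X-AB\) is, with index \(\ind(I_X-AB)\). Combining these gives both the equivalence of the Fredholm property and the equality of indices.

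The only real obstacle is bookkeeping: checking the order of multiplication in each matrix identity, as the corrected step above shows. No analytic input is needed beyond Lemma~\ref{lem:block_diagonal_Fredholm} and invariance of the index under multiplication by invertibles.
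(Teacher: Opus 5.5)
Your proof is correct and follows the paper's route: the same block operator on $X\oplus Y$ and the same two unipotent factorisations. The only difference is that you clear the off-diagonal entry of each triangular factor with one more invertible right factor and then cite Lemma~\ref{lem:block_diagonal_Fredholm}, whereas the paper reads off the kernel and range of the triangular factors directly. In a final write-up, drop the displayed left-multiplication identity, which is false (its $(2,1)$ entry is $BAB$), and keep only the right-multiplication version you verified.
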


\begin{proof}
Put
\[
       W=\begin{pmatrix}I_X&A\\B&I_Y\end{pmatrix}
       \in\B(X\oplus Y).
\]
The factorisations
\begin{align*}
 \begin{pmatrix}I_X&0\\B&I_Y\end{pmatrix}
 \begin{pmatrix}I_X&A\\0&I_Y-BA\end{pmatrix}
 &=W,\\
 \begin{pmatrix}I_X&A\\0&I_Y\end{pmatrix}
 \begin{pmatrix}I_X-AB&0\\B&I_Y\end{pmatrix}
 &=W
\end{align*}
have invertible first factors.  It remains only to record the indices of the
triangular second factors.  For \(T\in\B(X)\) and \(C\in\B(X,Y)\),
\[
 \Ker\begin{pmatrix}T&0\\C&I_Y\end{pmatrix}
   =\{(x,-Cx):x\in\Ker T\},
 \qquad
 \Ran\begin{pmatrix}T&0\\C&I_Y\end{pmatrix}
   =\Ran T\oplus Y.
\]
Thus this triangular operator is Fredholm exactly when \(T\) is, and has the
same index.  Similarly, for \(V\in\B(Y)\) and \(D\in\B(Y,X)\),
\[
 \Ker\begin{pmatrix}I_X&D\\0&V\end{pmatrix}
   =\{(-Dy,y):y\in\Ker V\},
 \qquad
 \Ran\begin{pmatrix}I_X&D\\0&V\end{pmatrix}
   =X\oplus\Ran V,
\]
so this operator is Fredholm exactly when \(V\) is and has index \(\ind V\).
Both factorisations of \(W\) now give the claim.
\end{proof}

\begin{proposition}\label{prop:block_index_Jacobson}
Let \(X\) and \(Y\) be essentially incomparable, put \(Z=X\oplus Y\), and
let \(A,B\in\B(Z)\).  If \(I_Z-AB\) is invertible, then \(I_Z-BA\) is
invertible and
\[
       \varphi(I_Z-AB)=\varphi(I_Z-BA).
\]
Consequently, if \(I_Z-BA\in\Invzero(\B(Z))\), then
\[
       \varphi(I_Z-AB)=0.
\]
\end{proposition}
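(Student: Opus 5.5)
Invertibility of \(I_Z-BA\) is Lemma~\ref{lem:Jacobson_inverse} applied in \(\B(Z)\). The substance is the equality of the \((1,1)\)-indices. The plan is to compute \((I_Z-AB)_{11}\) and \((I_Z-BA)_{11}\) modulo inessential operators on \(X\) and compare them with the Fredholm Jacobson identity.

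Write \(A=(A_{ij})\) and \(B=(B_{ij})\) as \(2\times2\) operator matrices with respect to \(Z=X\oplus Y\). Then
\[
 (I_Z-AB)_{11}=I_X-A_{11}B_{11}-A_{12}B_{21},
 \qquad
 (I_Z-BA)_{11}=I_X-B_{11}A_{11}-B_{12}A_{21}.
\]
By essential incomparability, \(A_{12}\in\B(Y,X)\) is inessential, and so \(A_{12}B_{21}\in\E(X)\) by the ideal property; likewise \(B_{12}A_{21}\in\E(X)\). By Proposition~\ref{prop:phi}, both \((1,1)\)-entries are Fredholm, since \(I_Z-AB\) and \(I_Z-BA\) are invertible. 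An inessential perturbation of a Fredholm operator is Fredholm with the same index. Hence \(I_X-A_{11}B_{11}\) and \(I_X-B_{11}A_{11}\) are Fredholm, and
\[
 \varphi(I_Z-AB)=\ind(I_X-A_{11}B_{11}),
 \qquad
 \varphi(I_Z-BA)=\ind(I_X-B_{11}A_{11}).
\]
Lemma~\ref{lem:Fredholm_Jacobson}, applied with \(Y=X\) and the operators \(A_{11},B_{11}\in\B(X)\), shows that these two indices agree. This proves the displayed equality.

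For the consequence, \(\varphi\) is a continuous homomorphism into the discrete group \(\Z\) by Proposition~\ref{prop:phi}. It is therefore constant on the connected set \(\Invzero(\B(Z))\), where it takes the value \(\varphi(I_Z)=\ind I_X=0\). So if \(I_Z-BA\in\Invzero(\B(Z))\), then \(\varphi(I_Z-AB)=\varphi(I_Z-BA)=0\).

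The step needing care is the perturbation one: we must use that Fredholmness passes from \(U_{11}\) to \(U_{11}\) minus an inessential operator, and not only in the other direction. This holds because inessential perturbations preserve the Fredholm property in both directions, and it is exactly what the argument in Proposition~\ref{prop:phi} already uses.
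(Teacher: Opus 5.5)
Your proposal is correct and follows the paper's proof essentially step for step. Like the paper, you discard the inessential cross terms \(A_{12}B_{21}\) and \(B_{12}A_{21}\) from the \((1,1)\)-entries, apply Lemma~\ref{lem:Fredholm_Jacobson} to \(A_{11},B_{11}\), and use continuity of \(\varphi\) into the discrete group \(\Z\) for the consequence.
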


\begin{proof}
The invertibility of \(I_Z-BA\) follows from
Lemma~\ref{lem:Jacobson_inverse}.  Write
\[
 A=\begin{pmatrix}A_{11}&A_{12}\\A_{21}&A_{22}\end{pmatrix},
 \qquad
 B=\begin{pmatrix}B_{11}&B_{12}\\B_{21}&B_{22}\end{pmatrix}.
\]
The \((1,1)\)-entry of \(I_Z-AB\) is
\[
       I_X-A_{11}B_{11}-A_{12}B_{21}.
\]
The last summand is inessential on \(X\).  Proposition~\ref{prop:phi} and
invariance of the index under inessential perturbations yield
\[
       \varphi(I_Z-AB)=\ind(I_X-A_{11}B_{11}).
\]
Similarly,
\[
       \varphi(I_Z-BA)=\ind(I_X-B_{11}A_{11}).
\]
These indices are equal by Lemma~\ref{lem:Fredholm_Jacobson}.

If \(I_Z-BA\in\Invzero(\B(Z))\), continuity of \(\varphi\) and discreteness
of \(\Z\) imply \(\varphi(I_Z-BA)=\varphi(I_Z)=0\).  The first part then
gives \(\varphi(I_Z-AB)=0\).
\end{proof}

\begin{example}\label{ex:l_infty_c0}
Gonz\'alez proved that if a Banach space \(Y\) contains no isomorphic copy of
\(\ell_1\) and \(Z\) is \(\ell_\infty\), \(H^\infty\), or \(C(K)\) for a
\(\sigma\)-Stonian compact space \(K\), then \(Y\) and \(Z\) are essentially
incomparable \cite[Theorem~1(c)]{Gonzalez}.  Moreover, \(c_0\) contains no
isomorphic copy of \(\ell_1\).  Indeed, if \(E\) is a closed subspace of
\(c_0\), the restriction map \(c_0^*=\ell_1\to E^*\) is onto, so \(E^*\) is
separable, whereas \(\ell_1^*=\ell_\infty\) is non-separable.  The criterion
therefore applies with \(Y=c_0\) and \(Z=\ell_\infty\).
On \(\ell_\infty\) and \(c_0\), respectively,
define the left and right shifts
\[
\begin{aligned}
 T_1(x_1,x_2,\ldots)&=(x_2,x_3,\ldots),
& S_1(x_1,x_2,\ldots)&=(0,x_1,x_2,\ldots),\\
 T_2(y_1,y_2,\ldots)&=(y_2,y_3,\ldots),
& S_2(y_1,y_2,\ldots)&=(0,y_1,y_2,\ldots).
\end{aligned}
\]
Let
\[
 P:\ell_\infty\to c_0,
 \qquad P(x_1,x_2,\ldots)=(x_1,0,0,\ldots),
\]
and
\[
 Q:c_0\to\ell_\infty,
 \qquad Q(y_1,y_2,\ldots)=(y_1,0,0,\ldots).
\]
Using
\[
 T_jS_j=I,
 \quad
 S_jT_j=I-P_1,
 \quad
 PQ=P_1|_{c_0},
 \quad
 QP=P_1|_{\ell_\infty},
\]
where \(P_1\) is the first-coordinate projection, one obtains directly
\[
 \begin{pmatrix}T_1&0\\P&S_2\end{pmatrix}
 \begin{pmatrix}S_1&Q\\0&T_2\end{pmatrix}
 =I_{\ell_\infty\oplus c_0}
 =
 \begin{pmatrix}S_1&Q\\0&T_2\end{pmatrix}
 \begin{pmatrix}T_1&0\\P&S_2\end{pmatrix}.
\]
Thus
\[
       M=\begin{pmatrix}T_1&0\\P&S_2\end{pmatrix}
       \in\Inv(\B(\ell_\infty\oplus c_0)).
\]
The operator \(T_1\) is onto and has one-dimensional kernel, so
\[
       \varphi(M)=\ind T_1=1.
\]
Hence \(M\notin\Invzero(\B(\ell_\infty\oplus c_0))\).  More strongly,
Proposition~\ref{prop:block_index_Jacobson} shows that there do not exist
\(A,B\in\B(\ell_\infty\oplus c_0)\) such that
\[
       M=I-AB,
       \qquad
       I-BA\in\Invzero(\B(\ell_\infty\oplus c_0)).
\]
\end{example}

\section{Open problems}

Question~\ref{q:torsion} asks whether the component detected by a Jacobson
pair must yield torsion.  For the operators in Theorem~A there is a more
specific problem.

\begin{question}\label{q:stabilisation_kernel}
Let \(E=X\oplus X\) and \(S,T\) be as in Theorem~A.  What is
\[
 \ker\bigl(s_{\B(E)}:\IndG(\B(E))\longrightarrow
                 \IndG(M_2(\B(E)))\bigr)?
\]
In particular, what is the order of \([I_E-2ST]\)?  Its image in the Calkin
index group has order two, but it is not known whether the class itself is
torsion.
\end{question}

\begin{question}\label{q:classical_space}
Can exponential-spectral commutativity fail in \(\B(X)\) when \(X\) is a
classical sequence or function space?  More generally, which structural
properties of \(X\) rule out such a failure?
\end{question}

\end{document}